\font\sc=rsfs10 at 12pt
\renewcommand{\a}{\alpha}
\newcommand{\be}{\beta}
\newcommand{\G}{\Gamma}
\newcommand{\de}{\delta}
\newcommand{\D}{\Delta}
\newcommand{\e}{\epsilon}
\newcommand{\ve}{\varepsilon}
\newcommand{\z}{\zeta}
\newcommand{\y}{\eta}
\newcommand{\io}{\iota}
\newcommand{\la}{\lambda}
\newcommand{\m}{\mu}
\newcommand{\n}{\nu}
\newcommand{\x}{\xi}
\newcommand{\X}{\Xi}
\newcommand{\ro}{\rho}
\newcommand{\s}{\sigma}
\newcommand{\Si}{\Sigma}
\newcommand{\vs}{\varsigma}
\newcommand{\ta}{\tau}
\newcommand{\f}{\phi}
\newcommand{\F}{\Phi}
\newcommand{\om}{\omega}
\newcommand{\Om}{\Omega}
\newcommand{\Th}{\Theta}
\newcommand{\R}{{\mathbb R}}
\newcommand{\N}{{\mathbb{N}}}
\newcommand{\K}{{\mathbb{K}}}
\newcommand{\ab}{{\mathbf a}}
\newcommand{\cb}{{\mathbf c}}
\newcommand{\db}{{\mathbf d}}
\newcommand{\gb}{{\mathbf g}}
\newcommand{\hb}{{\mathbf h}}
\newcommand{\nb}{{\mathbf n}}
\newcommand{\qb}{{\mathbf q}}
\newcommand{\sbb}{{\mathbf s}}
\newcommand{\tb}{{\mathbf t}}
\newcommand{\xb}{{\mathbf x}}
\newcommand{\yb}{{\mathbf y}}
\newcommand{\Bb}{{\mathbf B}}
\newcommand{\Cb}{{\mathbf C}}
\newcommand{\Hb}{{\mathbf H}}
\newcommand{\Lb}{{\mathbf L}}
\newcommand{\Mb}{{\mathbf M}}
\newcommand{\Nb}{{\mathbf N}}
\newcommand{\Tb}{{\mathbf T}}
\newcommand{\Zb}{{\mathbf Z}}
\newcommand{\AF}{\mathfrak A}
\newcommand{\dF}{\mathfrak d}
\newcommand{\gF}{\mathfrak g}
\newcommand{\HF}{\mathfrak H}
\newcommand{\KF}{\mathfrak K}
\newcommand{\LF}{\mathfrak L}
\newcommand{\MF}{\mathfrak M}
\newcommand{\NF}{\mathfrak N}
\newcommand{\RF}{\mathfrak R}
\newcommand{\SF}{\mathfrak S}
\newcommand{\TF}{\mathfrak T}
\newcommand{\ZF}{\mathfrak Z}
\newcommand{\XF}{\mathfrak X}
\newcommand{\YF}{\mathfrak Y}
\newcommand{\Ac}{{\mathcal A}}
\newcommand{\Dc}{{\mathcal D}}
\newcommand{\Ec}{{\mathcal E}}
\newcommand{\Gc}{{\mathcal G}}
\newcommand{\Hc}{{\mathcal H}}
\newcommand{\Lc}{{\mathcal L}}
\renewcommand{\Mc}{{\mathcal M}}
\newcommand{\Nc}{{\mathcal N}}
\newcommand{\Rc}{{\mathcal R}}
\newcommand{\Sc}{{\mathcal S}}
\newcommand{\Uc}{{\mathcal U}}
\newcommand{\Wc}{{\mathcal W}} % NEW
\newcommand{\Xc}{{\mathcal X}}
\newcommand{\Yc}{{\mathcal Y}}
\newcommand{\Zc}{{\mathcal Z}}
\newcommand{\Fs}{\sc\mbox{F}\hspace{1.0pt}}
\newcommand{\Vs}{\sc\mbox{V}\hspace{1.0pt}}
\newcommand{\sgn}{\hbox{{\rm sign}}\,}
\newcommand{\supp}{\hbox{{\rm supp}}\,}
\DeclareMathOperator{\re}{{\rm Re}\,}
\newcommand{\diam}{\operatorname{diam\,}}
\newcommand{\Tr}{\operatorname{Tr\,}}
\newcommand{\1}{\mathbf 1}
\newenvironment{dedication}{\itshape\center}{\par\medskip}
\newenvironment{acknowledgments}{\bigskip\small\noindent\textit{Acknowledgments.}}{\par}
\newtheorem{thm}{Theorem}[section]
\newtheorem{cor}[thm]{Corollary}
\newtheorem{lem}[thm]{Lemma}
\newtheorem{proposition}[thm]{Proposition}
\theoremstyle{definition}
\newtheorem{defin}[thm]{Definition}
\newtheorem*{rem}{Remark}
\numberwithin{equation}{section}
\begin{document}

% Give an abbreviation of the title for the running page headers.
%\titlerunning{Eigenvalues of singular measures}

% Here you can enter the full article title.
\title[Singular measures and Connes integration ]{Eigenvalues of singular measures and Connes noncommutative integration}

\author{Grigori Rozenblum}
\address{Chalmers Univ. of Technology; The Euler International Mathematical Institute, and St.Petersburg State University}
\email{grigori@chalmers.se}

\begin{abstract} In a domain $\Om\subset \R^\Nb$ we consider compact, Birman-Schwinger type operators of the form $\Tb_{P,\AF}=\AF^* P\AF$ with $P$
 being a Borel measure in $\Om,$ containing a singular part and $\AF$ being an order $-\Nb/2$ pseudodifferential operator. For a class of such operators, we obtain a proper version of H.Weyl's law for eigenvalues,
  with order not depending on dimensional characteristics of the measure. These results lead to establishing measurability, in the sense of Dixmier - Connes, of such operators and the noncommutative version of integration over Lipschitz surfaces and rectifiable sets.
\end{abstract}
\maketitle
% Here you can enter an optional dedication.
\begin{dedication}
In the memory of Misha Shubin, a friend and a great mathematician.
\end{dedication}

% Here you can enter the abstract, MSC classes, and keywords.
%\begin{abstract}
%An abstract.
%\subjclass{Primary 12X34; Secondary 56Y78}
%\keywords{Aaaa, bbbb, cccc}
%\end{abstract}

\section{Introduction}
\subsection{Operators associated with singular measures and their spectrum}
In the recent paper \cite{RSh2},  Birman-Schwinger (Cwikel) type operators  in a domain $\Om\subseteq\R^\Nb$ were considered, namely, the ones  having the form $\Tb_P=\AF^*P\AF$. Here  $\AF$ is a pseudodifferential operator in $\Om$ of order $-l=-\Nb/2$ and $P=V\m$ is a finite signed Borel measure containing a singular part. We found out there that for such operators, properly defined using quadratic forms, for a wide class of measures,  an  estimate for eigenvalues $\la^{\pm}_k=\la^{\pm}_k(\Tb_P)$ holds with order $\la^{\pm}_k=O(k^{-1})$ with coefficient involving an Orlicz norm of the weight function $V$. For a subclass of such measures, namely, for the ones whose singular part is a finite sum of measures absolutely continuous with respect to the surface measures on disjoint compact  Lipschitz surfaces of arbitrary dimension, an asymptotic, Weyl type,  formula for eigenvalues was proved, with all surfaces, independently of their dimension, making the same order contributions. In the present paper we discuss some generalizations of these results and their consequences for defining  noncommutative integration with respect to singular measures.

Our considerations are based upon the variational (via quadratic forms) approach to the spectral analysis of differential operators in a singular setting, in the form developed in 60-s and 70-s by M.Sh. Birman and M.Z. Solomyak. This approach enables one to obtain, for rather general spectral problems, eigenvalue estimates, sharp both in order and in the class of functional coefficients involved, this sharpness confirmed by exact asymptotic eigenvalue formulas. In the initial setting, this approach was applied to measures $P$ absolutely continuous with respect to the Lebesgue measure. Passing to singular measures, it was previously  found that, for the equation
 $-\la\D(X)=Pu(X), \, X\in\Om\subseteq \R^\Nb$, if the singular part of $P$ is concentrated on a smooth compact  surface inside $\Om$ (or on the boundary of $\Om$, provided the latter is smooth enough), it makes contribution of the order, different from the one produced by the absolutely continuous part, see, e.g., \cite{Agr} or \cite{Kozh}. It happens always, with the only exception of  the case $\Nb=2$, where the above orders are the same. For a class  of singular self-similar measures $P$, K.Naimark and  M.Solomyak established  in \cite{NaiSol} two-sided estimates for eigenvalues. And it turned out there that the \emph{order} of two-sided eigenvalue estimates  depends generally on the parameters used in the construction of the measure, in particular, on the Hausdorff dimension of its support.  However, in the single case, again of the dimension being equal to $2$, this dependence disappears, and the eigenvalues have one and the same order for all measures in the class under consideration, independently, in particular of their dimensional characteristics. In a very recent study in \cite{KarSh}, a new approach to the spectral problem $-\la\D u(X)=Pu(X), \, X\in\Om\subseteq \R^2$ have been developed, establishing, again, for a wide class of singular measures, upper eigenvalue estimates of one and the same order, independently on the Hausdorff dimension of the support of the measure. It became rather intriguing to understand which mechanism lies under this exceptional feature   of spectral problems in dimension 2.

In \cite{RSh2}  (main ideas and some results were announced in \cite{RSh1})  the above spectral problems have been generalized to an arbitrary dimension $\Nb$, so the eigenvalue properties were studied of  operator $\AF^* P\AF$, where $\AF$ is an order $-l=-\Nb/2$ pseudodifferential operator in a domain $\Om\subseteq \R^\Nb$ and $P$ is a signed measure of the form $V\m$, with Ahlfors regularity conditions imposed upon the measure $\m$ and  with  weight $V$ belonging to  a certain Orlicz class with respect to $\m$. Such a measure $\m$ is equivalent to the Hausdorff measure of some dimension $d, 0<d\le \Nb$ on the support of $\m$.  This operator is a natural generalization of the Birman-Schwinger operators which since long ago have been playing an important part in spectral and scattering theory. An eigenvalue estimate for this operator was established  in \cite{RSh2}, of order  depending neither on the dimension $\Nb$ nor on the Hausdorff dimension of the measure $\m$.  For $\m$ being the Hausdorff measure on a Lipschitz surface of any positive dimension and codimension, the asymptotics of eigenvalues has been found, supporting the sharpness of, more general, upper estimates.

In the present paper we extend results of  \cite{RSh2} to some wider class of measures and operators. These results lead to  establishing the noncommutative  measurability in the sense of A.Connes of the generalized Birman-Schwinger operators and in this way  we define the noncommutative  integral with respect to such measures, involving an analogy of the Wodzicki  residue. In particular, we present a noncommutative version of the Hausdorff measure for  a class of 'rectifiable',  in the sense of geometric measure theory, sets.

Our considerations have their roots in  results and constructions of the paper \cite{RSh2}, written jointly with Eugene Shargorodsky.   The author expresses deep gratitude to Eugene for benevolent attention and stimulating discussions. He also thanks Rapha$\ddot{e}$l Ponge for  explaining crucial facts about singular traces and the notion of measurability.

%In the course of our calculations, we give a kind of speculative explanation to this phenomenon of the independence of the order of eigenvalue distribution on the dimension of the enveloping space as well of the dimensional characteristics of the measure.

\subsection{Singular traces and Dixmier--Connes' integral}
In the huge and expanding field of the noncommutative geometry  (NCG) initiated by A. Connes, \cite{Connes0}, \cite{Connes1}, an important direction of studies deals with the notion of the noncommutative integral. Following the general  idea, in order to define integral on some algebra $\Ac$ of objects (say, functions), we associate, by means of some linear mapping $\f,$ with an object $\ab\in \Ac,$ a compact operator $\Tb=\f(\ab)$ belonging to the Dixmier-Matsaev ideal $\MF_{1,\infty}$ (consisting of operators $\Tb$ with singular numbers estimate $\sum_{k\le n}s_k(\Tb)=O(\log n).$) This ideal is larger than the trace class ideal $\SF^1$ and even larger than the ideal $\SF_{1,\infty}$ of operators with singular numbers satisfying the estimate $s_k(\Tb)=O(k^{-1}).$ On the ideal $\MF_{1,\infty}$, it is possible to define \emph{singular traces}, namely, continuous functionals  $\ta$ which are linear, positive, unitarily invariant, satisfy the trace property $\ta(\Tb_1\Tb_2)=\ta(\Tb_2\Tb_1)$ (provided the products here belong to $\MF_{1,\infty}$), and, finally, vanishing on the trace class ideal, see \cite{LSZ.Trace}. There are quite a lot of singular traces, the most important ones are obtained in the following way. For \emph{nonnegative} operators $\Tb\in\MF_{1,\infty}$, one considers the functional
\begin{equation}\label{weaklim}
\ta_0(\Tb)=\lim_{n\to\infty} (\log (n+2))^{-1} \sum_{k\le n}s_k(\Tb),
\end{equation}
on the subspace of  those operators $\Tb$ for which  this limit exists. This functional turns out to be linear on the cone of positive operators. After further extension by linearity, it is defined on a closed subspace in $\MF_{1,\infty}$ and continuous. Among Hahn-Banach continuous extensions $\ta$ of $\ta_0$ to the whole of $\MF_{1,\infty}$ there exist ones that satisfy the conditions in the definition of the singular trace. One can adopt $\ta(\f(\ab))$ as the integral of the object $\ab,$ and it is now universally called \emph{Dixmier--Connes' integral}. Having fixed such generalized trace for nonnegative operators, one can extend it by linearity to arbitrary operators in this class, since any compact operator is a linear combination of four nonnegative ones (there are certain limitations for this procedure, see, e.g., \cite{LSZ.Trace}.)

One of the earliest realizations of this scheme in \cite{Connes1} consists of recovering the integral of a function $V$ on a $\Nb-$ dimensional Riemannian manifold $\Mc$ by means of the singular trace of some  operator $T_V$ related with $V$. Initially it was proposed to consider operator  $T_V=V(-\D+1)^{-\Nb/2}$, where $\D$ is the Laplace-Beltrami operator on $\Mc$, for a smooth function $V$. It was established in \cite{Connes1} that
\begin{equation}\label{Weylw}
 \ta(T_V)=\pmb{\varpi}_{\Nb}\int V d\m_\Mc,\, \pmb{\varpi}_{\Nb}= \frac{\om_{\Nb-1} }{\Nb (2\pi)^\Nb}
\end{equation}
where $\m_\Mc$ is the Riemannian measure on $\Mc$ and $\om_{\Nb-1}$ is the measure of the unit sphere in $\R^\Nb$.
  In particular, it follows that this operator $T_V$ is \emph{measurable} in the sense that $\ta(T_V)$ has the same value for all positive normalized singular traces $\ta$. Such measurability, and even universal measurability (in the sense of \cite{Connes1}) results follow, in particular, from the fact that the limit in \eqref{weaklim} exists. Moreover, by  generalizations of Weyl's law, for $\Tb=T_V,$ even the limit
\begin{equation}\label{limstrong}
    \lim_{k\to\infty}{k s_k(\Tb)},
\end{equation}
exists. Of course, the existence of the limit in \eqref{limstrong} implies such existence for the limit in \eqref{weaklim}, but the converse is not, generally, correct. The result on measurability was established also for a noncompact $\Mc,$ namely, for $\Mc=\R^\Nb$, under the condition that $V$ has compact support.

In the process of time, this procedure of construction of singular traces and the notion of measurability developed quite noticeably. Quite a few versions of singular traces appeared since \cite{Connes0}, \cite{Connes1}, differing by the properties of the class of initial traces to be extended and by the ideals to which this trace is extended; reviews of these versions can be found in \cite{LSZ.Trace}, \cite{SSU}, \cite{SSUZ}, \cite{SU}, and, especially, in \cite{LoSuMeas} where an extensive hierarchy of classes of traces and corresponding   nonequivalent notions of operator measurability has been described. Our analysis does not distinguish between these versions, therefore we will refer to Dixmier--Connes' measurable operators and Dixmier--Connes' integral.

 In the latest decennium, quite an activity developed,
concerning  extending  these results to less regular functions $V,$ see, e.g., \cite{KLPS}, \cite{LPSMeasure}, \cite{LoSuMeas}, \cite{LSZ.Trace}, \cite{LSZ}. Say, if $V$ belongs to $L_2$ and, in the case of $\Mc=\R^\Nb$, has a compact support,  operator $T_V$ belongs to $\SF_{1,\infty},$ is measurable, and the usual expression \eqref{Weylw} is valid for the singular trace. However (and this was noticed, e.g., in \cite{LPSMeasure}),  if $V$ is outside $L_2(\Mc)$,  operator $T_V$ may turn out to be not bounded, to say nothing of being compact. Therefore, a proposal was made in \cite{LPSMeasure} to consider a different, 'symmetrized,' operator associated with $V,$ namely, $\Tb_V=(-\D+1)^{-\Nb/4}V(-\D+1)^{-\Nb/4}.$ Being properly defined, this operator is bounded and even compact for $V\in L_p,$ $p>1$, with compact support, self-adjoint for real-valued $V$, and the trace formula \eqref{Weylw} holds. Much more hard is the   case $p=1$: here the right-hand side in \eqref{Weylw} is still finite but the question about the existence and the value of the trace on the left-hand side turns out to be rather complicated. Simple examples show that for a general $V\in L_1$,  operator $\Tb_V$ may fail to be bounded, moreover this effect may be caused both by  local singularities of $V$ and by an insufficiently fast decay of $V$ at infinity (for a noncompact $\Mc$). Very recently the conditions, rather sharp, were elaborated granting the compactness of $\Tb_V$ as well as its membership in $\MF_{1,\infty}$ and the validity of the integration formula. These conditions require $V$ to be just a little bit better  than simply lying in  $L_1$, namely, to belong to a certain  Marcinkiewicz space, see \cite{SZSol}, where $\Tb_V$ was called \emph{the Cwikel operator}.
\subsection{Birman-Schwinger operators and their eigenvalues}
Independently of these results, and even considerably earlier, spectral properties of operators of the form $\Tb_V$ have been the object of intensive studies by specialists in mathematical physics. The case of the highest interest was the one of $\Mc=\R^\Nb$; here this topic is closely related with the eigenvalue analysis of the Schr\"odinger operator. We define, for a compact self-adjoint operator $\Tb$, $n_{\pm}(\la, \Tb)$ to denote the number of eigenvalues of $\pm\Tb$ in $(\la,\infty).$ Operators like  $\Tb_V$ are called Birman-Schwinger operators, and by the \emph{Birman-Schwinger principle},
\begin{equation}\label{BirSchPrin}
    n_{+}(\la, (-\Delta+E)^{-q/2}V(-\Delta+E)^{-1/2})=N_-((-\Delta+E)-\la^{-1}V),\, \la>0,
\end{equation}
where  the expression on the right is the number of negative eigenvalues of the Schr\"odinger  operator. In dimension $\Nb>2,$    equality \eqref{BirSchPrin} is valid for $E=0$ as well, and  sharp results on the eigenvalue estimates and asymptotics have been obtained quite long ago. However, in dimension $\Nb=2$ some deep modifications are needed in the expression  on the left-hand side for the proper version of \eqref{BirSchPrin} to hold.  Anyway, for $V\in L_p(\R^2), \, p>1$ with compact support, estimates and asymptotics of eigenvalues of $\Tb_V$ were known as long ago as in 1972, see \cite{BirSolLect} and references therein. Sharper results, approaching $p=1,$ were obtained by M.Z.Solomyak \cite{Sol94} in 1994, where the condition on $V$, besides the compactness of support, involved the membership of $V$ to certain Orlicz class, i.e., again, a little bit better than $V\in L_1$. In the same paper, the case of any even dimension $\Nb$ was handled in a similar way. Problems without compact support condition were studied in \cite{BLS} and further on, see the latest developments in \cite{Shar2D}.

These two lines of  study converged  recently in the paper \cite{SZSol}, where the method of piecewise polynomial approximations, in the version elaborated by M.Z. Solomyak, was adapted to prove the measurability of the  operator $\Tb_V$ on $\R^{\Nb}$ and on the torus $\mathbb{T}^{\Nb}$ with $V$ in the Marcinkiewicz class.

Having in mind an extension of the notion of Connes' integral, we take a somewhat different point of view. We are looking for defining integration of \emph{measures}, including singular ones, in the context of the noncommutative geometry. The starting point will be a re-statement of the above results for a  measure $P$ containing, possibly, a singular component. For a, possibly, unbounded, domain $\Om\subset \R^{\Nb}$, we consider an operator $\AF$ in $L_2(\Om)$. It is a  pseudodifferential operator of order $-l=-\Nb/2,$ acting as $\AF:C_0^{\infty}(\Om)\to C_0^{\infty}(\Om)$ (we call such operators compactly supported). In the leading example,  the principal symbol of $\AF$ equals $a_{-l}(X,\X)=|\X|^{-l}$ for $X$ in a proper bounded subdomain $\Om'\subset\overline{\Om'}\subset\Om$. As examples of such $\AF$ may serve $\theta(X)\LF \theta(X)$, where $\theta(X)$ is a smooth function in $C^{\infty}_{0}(\Om)$, which equals $1$ on $\Om'$, and $\LF$ may be the inverse of the proper power of the Laplacian on $\Om$ with some elliptic (e.g., Dirichlet) boundary conditions or the operator $(-\Delta+1)^{-\Nb/4}$ in $\R^\Nb$. For the, probably, most interesting, case $\Om=\R^{\Nb},$ we consider $\AF=\AF_0\equiv(1-\Delta)^{-\Nb/4}.$
\subsection{Birman-Schwinger operators for singular measures}
Let $P$ be a signed Radon measure on $\Om$. With such measure and operator  we associate the  Birman-Schwinger (or Cwikel) operator in the following way. If $P$ is absolutely continuous with respect to the Lebesgue measure, with density $V(X),$ $P=V(X)dX$, we set
\begin{equation}\label{BScOPerator.AC}
    \Tb_P\equiv\Tb_{V}\equiv \AF^* P\AF\equiv \AF^* V\AF.
\end{equation}
If the function $V$ is bounded, $\Tb_V$ is automatically a bounded operator. Some more trouble arises if $V$ is an unbounded function. An approach to defining this operator was (for $\AF=(-\D+1)^{-\Nb/4}$) proposed in \cite{LPSMeasure}, based upon tracing between which Sobolev spaces separate factors in \eqref{BScOPerator.AC} act. We use a different approach, equivalent to this one for absolutely continuous measures, however allowing extension to measures in more general classes. Namely, we associate with \eqref{BScOPerator.AC} the quadratic form in $L^2(\Om),$
\begin{equation}\label{quadr.form V}
    \tb_{P,\AF}[u]=\int_\Om|(\AF u)(X)|^2 P(dX)=\int_\Om|(\AF u)(X)|^2 V(X)dX.
\end{equation}
If this quadratic form is well-defined and bounded in $L_2(\Om)$, it defines there an operator, which we will accept as $\Tb_{P,\AF}$. In particular, if $V\in L_\infty$, this operator, obviously, coincides with  $\AF^* V\AF$ understood as a product of three bounded operators. Moreover, if we set $V=|V|\sgn V=U^2\sgn V$, it follows from \eqref{quadr.form V} that

\begin{equation*}%\label{representation}
    \tb_{P,\AF}[u]=\int_\Om \overline{(\AF u)(X)}(\AF u)(X) V(X)dX  =\langle(U\AF)^*(\sgn V) (U\AF)u,u\rangle_{L_2(\Om)},
\end{equation*}
therefore
 \begin{equation}\label{factorization}
 \Tb_P=(U\AF)^*(\sgn V)(U\AF).
 \end{equation}
This representation has been also used in \cite{LPSMeasure} and \cite{LSZ} for $V$ in Marcinkiewicz and Orlicz classes.

We are interested in expanding the definition \eqref{BScOPerator.AC} to the case when  measure $P$ contains a singular part, $P=P_{ac}+P_{sing}$. Namely, we set, with this new meaning,
\begin{equation}\label{quadrFormSing}
    \tb_{P,\AF}[u]=\int_\Om|(\AF u)(X)|^2 P(dX).
\end{equation}
This quadratic form is defined initially on smooth functions in $L_2(\Om).$ If it proves to be bounded in $L_2(\Om)$ -- and we will find sufficient conditions for this boundedness (see Section \ref{Sect.Est}) --  it can be extended to the whole $L_2(\Om)$ by continuity and  we accept the corresponding bounded self-adjoint operator as $\Tb_{V}\equiv \AF^* P \AF$. Generalizing the case of an absolutely continuous measure, this operator  admits a  factorization similar to  \eqref{factorization}.

For an unbounded domain $\Om,$ especially, for $\Om=\R^\Nb$, we always suppose here that  measure $P$ has compact support. It is well known, even for an absolutely continuous measure, that for the whole $\R^\Nb$ the behavior of $P$ at infinity requires  rather special considerations since infinity can make  to the eigenvalue counting function a stronger contribution than the local terms, see \cite{BLS}. Even on the plane, $\Nb=2,$ sharp conditions for the Birman-Schwinger operator to satisfy the Weyl formula,
 are still unknown up to now, the best results being obtained in \cite{Shar2D}. Being interested in local effects, we do not touch upon such problems here.

Let now $\Mc$ be a compact $\Nb$-dimensional  Riemannian manifold, with  Riemannian measure $\pmb{\m}_{\Mc}$; we denote by $\D_{\Mc}$ the corresponding Laplace-Beltrami operator, self-adjoint in $L_2(\Mc,\pmb{\m}_{\Mc})$. For a finite signed Borel measure $P$ on $\Mc$, we define the operator $\Tb_P=\Tb_{P,\Mc}$ in $L_2(\Mc,\pmb{\m}_{\Mc})$ by means of the quadratic form
\begin{equation}\label{quadrFormSing1}
    \tb_{P,\Mc}[u]=\int_\Mc|((-\D_{\Mc}+1)^{-\Nb/4} u)(X)|^2 P(dX),\,  u\in L_2.
\end{equation}
Again, if  measure $P$ is absolutely continuous with respect to the Riemannian measure $\pmb{\m}_{\Mc}$, $P=V \pmb{\m}_{\Mc}$, the operator $\Tb_{P,\Mc}$ coincides with the properly defined operator
 $(-\D_{\Mc}+1)^{-\Nb/4}V(-\D_{\Mc}+1)^{-\Nb/4},$ and the integrability results in \cite{LPSMeasure} and \cite{LSZ} apply.
 If $\Uc$ is a local co-ordinate neighborhood in $\Om$, containing the support of the measure $P$,  with the diffeomorphism $\Fs: \Uc\to \Wc\subset \R^{\Nb}$ then
 the operator $\Tb_{P,\Mc},$ by usual localization,  transforms to an operator of the type  $\AF^{{}^*} (\Fs^{{}^*}P)\AF,$ where $\Fs^{{}^*}P$ is the measure in $\Wc,$ the pull-back  of $P$ under the mapping $\Fs$, and $\AF$ is the order $-\Nb/2$ pseudodifferential operator in $\Wc$, actually $(-\D_{\Mc}+1)^{-\Nb/4}$ expressed in  local co-ordinates in $\Wc$.
 So, as soon as such localization is justified (this is done in a rather traditional straightforward way) we are left with the task of spectral analysis of the operator $\Tb=\Tb_{P,\AF}$ in a domain in $\R^\Nb$. In fact, without additional work, we can consider in this way more general operators on manifolds, the ones having the form $\AF^* P\AF$ where $\AF$ is an order $-l=-\Nb/2$ pseudodifferential operator on $\Mc$, with the result having a similar form.
\subsection{Main results}
  Our aim is two-fold. First, to extend the class of measures and operators for which $\Tb_{V,\AF}$ belongs to the class $\MF_{1,\infty}$, so that $\ta(\Tb_{P,\AF})$ is finite (but, probably, depends on the choice of the singular trace $\ta$). This property follows from the eigenvalue estimates for $\Tb_{V,\AF}$. Such estimates have been obtained in \cite{RSh2} but we need  a somewhat more general class of measures, however the reasoning is rather similar. Secondly, we are going to find a subclass of  singular measures for which operator $\Tb_{P,\AF}$  is measurable, i.e., this trace does not depend on the above choice. In our case, this measurability follows first by establishing the Weyl asymptotic formula for eigenvalues of $\Tb_{P,\AF}$ for a measure concentrated on a Lipschitz surface $\Si$,
    \begin{equation}\label{As.Si}
    \lim_{\la\to 0}\la n_{\pm}(\la, \Tb_{P,\AF})=\int_\Si \ro_{\AF}(X)V_{\pm}(X)d\m_\Si,
  \end{equation}
 where  density $\ro_{\AF}(X)$   is determined by  operator $\AF.$ We find an explicit expression for this density; in the leading case $\AF=\AF_0=(1-\D)^{-\Nb/2},$ $\Hb=\Zb(d,\dF)$ is a constant depending on the dimension $d$ and codimension $\dF$ of the surface $\Si$.
 It follows from \eqref{As.Si} that operator $\Tb_{P,\AF}$ is measurable with the expression for the singular trace
 \begin{equation}\label{Connes.Surf}
    \ta(\Tb_{P,\AF})=\int_{\Si} \ro_{\AF}(X)V(X)d\m_\Si.
 \end{equation}
  We combine our eigenvalue estimates and asymptotics for measures on Lipschitz surfaces to extend them further to measures having more complicated structure, see Section \ref{Rect.Sect}. We prove   extensions of the Weyl formula (for the case of $\AF=\AF_0$ only, since it is  too cumbersome for the general case) for  a wide class of measures of the form $P=V \Hc^d$ (the latter symbol denotes the Hausdorff measure of dimension $d$) supported on a   \emph{rectifiable} set $\XF$, in the sense of geometric measure theory. These are sets in $\R^{\Nb}$ of Hausdorff dimension $d,$ $0<d<\Nb$ that can be covered by a countable collection of Lipschitz surfaces of dimension $d$. Some effective criteria for a set to be rectifiable exist, expressed in terms of local densities. We prove the Weyl formula
 \begin{equation}\label{As.Rect}
    \lim_{\la\to 0}\la n_{\pm}(\la, \Tb_{P,\AF_0})=\Zb(d,\dF)\int_{\XF} V_{\pm}(X)\Hc^d(dX),
  \end{equation}
  with coefficient $\Zb(d,\dF)$ explicitly written and depending only on the dimension $d$ and codimension $\dF=\Nb-d$ of the set
  This formula again, implies   Connes' measurability of  operator $\Tb_{P,\AF_0}$ in this setting and the trace formula,
  \begin{equation}\label{Connes.Rect}
    \ta(\Tb_{P,\AF_0})=\Zb(d,\dF)\int_{\XF}V(X)\Hc^d(dX).
 \end{equation}
  Singular trace formulas \eqref{Connes.Surf},  \eqref{Connes.Rect} can be understood as generalizations of the Wodzicki residue to a rather singular setting. The least restrictive are conditions for the case of the support $\XF$ of measure  $\m$ having dimension one. Here, the eigenvalue asymptotics and trace formula are justified as soon as the measure $\m$ is 1-Ahlfors regular, while its support is  connected (even more general conditions exist but they are somewhat cumbersome, see in the paper later.)

  Here we would like to note that, traditionally in the NCG community, Weyl type asymptotic eigenvalue formulas were not being used for proving measurability of integrals and  for calculating the singular trace. Even in a very recent review paper \cite{LSZ.advances}, it is written that the calculation of singular traces is rarely done using the explicit eigenvalue asymptotics. Most cases in Noncommutative Geometry use the Zeta-function approach and the heat equation approach  to calculate the singular trace. In fact, even in the review \cite{LSZ.advances}, published in 2019, papers by M.Sh.Birman and M.Z.Solomyak, where eigenvalue asymptotics for rather general pseudodifferential operators, as well as later developments in this topic, have not even been put on the reference list. In the present paper, in the opposite, the measurability and singular trace results are easy consequences of our Weyl type asymptotic eigenvalue formulas.

 The proofs of the above asymptotic and trace formulas are presented in the paper further on. It turns out that, in the most general setting, due to the linearity property of the singular trace,   the proof of the trace formula for measures of complicated structure \eqref{Connes.Rect} is considerably more elementary than the one of eigenvalue asymptotics \eqref{As.Rect}, although the former follows also immediately  from the latter one. For Readers interested in trace formulas only, we present independent, rather short, proofs as well.

 In cases when we are unable to prove asymptotic formulas for eigenvalues (in particular, for a fractional Hausdorff dimension of the support of the measure), we can, nevertheless, show that our upper eigenvalue estimates are sharp in order and are close to be sharp in the class of measures, by means of finding lower eigenvalue estimates of the same order. The author believes strongly that  Connes' measurability  holds in these cases as well, in particular for measures having fractional Hausdorff dimension, but there are no visible approaches to this problem at the moment.

\subsection{Alternative approaches}
Finally, in this section, we note that a quite different approach to the Dixmier-Connes integral against singular measures possessing some regular fractal structure has been developed some time ago by M. Lapidus, J.Fleckinger and their co-operators, see \cite{Lap1997}, \cite{Lap2008} and  references therein. For a fractal measure on a set $\Xc$ in the Euclidean space, operators were considered, using the Laplacian on this fractal set $\Xc$ itself, and singular traces of these operator were investigated. The required power of this fractal Laplacian used in this construction depends on the fractal dimension of the support
 of the measure, while our construction uses one and the same operator $\AF$ for all admissible measures. It might be interesting to find a connection between these two approaches.

  The setting by D.Edmunds and H.Trieblel,  see \cite{ET}, \cite{Triebel1}, \cite{Triebel2}, where an operator is associated with a rather general, Ahlfors regular, singular measure,  is closer to ours. However, the eigenvalue estimates obtained by these authors are usually not sharp in order and/or in the class of the weight functions (this is stressed, e.g., in Discussion 27.3,  Remark 27.5, or Remark 27.10  in  \cite{Triebel2}). Such circumstance prevents one from deriving eigenvalue asymptotics for operators under consideration -- this task being the main topic of our paper. Note however, that the crucial fact in our setting, namely, that for the case of the order of the operator being equal to the half of the dimension of the space, the eigenvalues decay order does not depend on this dimension, and on the dimensional characteristics of the measure either, has been predicted - and in some cases discovered  -- by the authors of these books, see, e.g.,  Proposition 28.10 in \cite{Triebel2}.

\section{Boundedness}\label{Bddness.Sect}
First, we consider operators $\AF$ having compact support, $\AF:C_0^\infty(\Om)\to C_0^\infty(\Om),$ $\Om\subset\R^{\Nb}.$
We set here more concrete conditions for the measure $P$ to define a bounded quadratic form   \eqref{quadr.form V} and further a bounded operator $\Tb_{V,\AF}$. Note, however, that due to localization properties, the choice of the set $\Om$ as well as the choice of the operator $\AF$ outside a neighborhood of the support of measure $P$ have a weak influence on the spectral properties of the operator $\AF,$ see Proposition \ref{Prop.Local}. We will use this freedom systematically.

 Since $\AF$ is a pseudodifferential operator of order $-l,$ it is sufficient to find conditions for the boundedness of the quadratic form $\sbb[v]=\int_{\Om}|v(X)|^2P(dX)$ in the Sobolev space $H^{l}_{0}(\Om)$. Smooth functions are dense in $H^{l}_{0}(\Om),$ therefore it suffices to justify the inequality
\begin{equation}\label{MazIneq}
    \left|\int_{\Om}|v(X)|^2P(dX)\right|\le C(V,\m) \|v\|^2_{H^l}(\Om), \,
\end{equation}
for $ v\in C^{\infty}_0(\Om)$ and then extend to the whole of $H^{l}_{0}(\Om)$ by continuity.
Here $\|v\|_{H^l}(\Om)$ is the usual norm in the Sobolev space;
\begin{equation}\label{Sob}
\|v\|^2_{H^l}(\Om)=\int_{\Om}|v|^2 dX+\|v\|^{ 2}_{(hom),H^l}(\Om),
\end{equation}
where the homogeneous seminorm in \eqref{Sob} is defined as $\|v\|^{ 2}_{(hom),H^l}(\Om)=\int_{\Om}|\nabla^l v|^2dX$ for an integer $l=\Nb/2$ and
\begin{equation*}
    \|v\|^{ 2}_{(hom),H^l}(\Om)=\int_{\Om\times\Om}\frac{|\nabla^{l-\frac12}v(X)-\nabla^{l-\frac12}v(Y)|^2}{|X-Y|^{\Nb+1}}dXdY
\end{equation*}
for a half-integer $l$.

Basic results in this direction have been established in works by V.Maz'ya. A sufficient condition, being also a necessary one for a positive measure $P$, is given by Theorem 11.3 in \cite{MazBook} in terms of capacity (for $l=1$, sharp conditions have been found  even for a signed measure $P$, however, for larger $l,$ such conditions seem to be still unknown.) We are interested in conditions expressed in more elementary terms, and therefore we use  Theorem 11.8 and Corollary 11.8/2 in \cite{MazBook}.

Measures $P$ considered here have the form $P=V\mu$, where $\m$ is some fixed singular measure, and $V$ is a $\m$-measurable real function which we call 'density'; our results consist of describing classes of densities for a given  $\m$ for which the required estimates for the operator norm, resp., eigenvalues, hold.  So, let $\m$ be a finite Borel measure on $\Om$. We denote by $\Mb=\Mb(\m)$ its support, the smallest closed set of full measure; we always suppose that $\Mb$ is compact. We do not usually distinguish between measure $\m$ considered on $\Mb$ and its natural extension by zero to the whole of $\Om:$  $\m(E):=\m(E\cap\Mb).$

Conditions imposed on density $V$ are expressed in terms of Orlicz spaces.
These spaces  have been long ago found to be  the proper instrument in the treatment of the critical case $2l=\Nb$. For a detailed exposition of these spaces, see, e.g., \cite{Krasn.Orlicz} or \cite{Rao}. We use  a special choice of Orlicz functions.
The Orlicz space $L^{\Psi,\m}, $ $\Psi(t)=(1+t)\log(1+t)-t,$ consists of $\m$-measurable functions $V$ on $\Mb$, satisfying $\int_{\Mb}\Psi(|V(X)|)d\m(X)<\infty$. For a subset $E\subset\R^{\Nb}$,  the norm in $L^{\Psi,\m}$ is defined by

\begin{equation}\label{exp.norm}
 \left|\left|V\right|\right|_{L^{\Psi,\m}(E)}=\inf\{\vs:\int_E\Psi(|V|/\vs)d\m\le 1; \, \m(E)>0\}.
\end{equation}

Function $\Phi(t)= e^t - 1 - t$ is Orlicz dual to $\Psi.$ Thus, the Orlicz space $L^{\Phi,\m}$ consists of functions  $g$ satisfying $\int_\Mb \Phi(|g|/\vs) d\m<\infty$ for some $\vs>0$ with norm defined similarly to \eqref{exp.norm}
\begin{equation*}%\label{Cnorm}
    \|g\|_{L^{\Phi,\m}(E)}=\inf\{\vs:\int_E\Phi(|g|/\vs)d\m\le 1\}.
\end{equation*}
Measure $\m$ may be omitted in this notation, as long as this does not cause a misunderstanding.
So,  functions in $L^{\Psi}$ are a tiny little bit better than just lying in $L_1(\m)$, while functions in $L^{\Phi}$ may be unbounded, but only very weakly.

By known embedding properties of Sobolev spaces, as soon as  measure $\m$ possesses at least one point mass, the corresponding quadratic form
$\sbb[v]$ is not bounded in $H^l$, in other words, functions in $H^l,\, l=\Nb/2,$ are not necessarily continuous or even essentially bounded. However, their possible unboundedness is very weak: they belong to $L^{\Psi}.$

The boundedness, to be used later on,  of the quadratic form $\sbb[v]$ in $H_0^l(\Om)$ (or $H^{l}(\Om)$) follows from two facts. One of them is the general H\"older type inequality (see, e.g., \cite{Krasn.Orlicz}) for Orlicz spaces,   having, in our case, the form
\begin{equation}\label{Holder}
    \left|\int |v|^2 V d\mu\right|\le C \left|\left| v^2\right|\right|_{L^\Phi}\|V\|_{L^\Psi};
\end{equation}
(the constant $C$ here is an absolute one; it would equal $1$ if we have used some other, equivalent, norms in the Orlicz spaces.)

Another ingredient is Corollary 11.8/2
in \cite{MazBook}. In our case, for $p=2$, $l=\Nb/2$, it sounds:
\begin{lem}\label{LemMaz}  The estimate
  \begin{equation}\label{MazIneq1}
    \left|\!\left|v^2\right|\!\right|_{L^{\Phi,\mu}}\le A \|v\|^2_{{H}^l(\Om)}
 \end{equation}
 holds for all $v\in H^l(\Om)$
 if and only if for some $\be>0$  measure $\m$ satisfies the inequality
 \begin{equation}\label{MazMeasCond}
    \m(B(r,X))\le C({\m}) r^{\be}, \, r<1, \, B(r,X):=\{Y: |Y-X|\le r\},
 \end{equation}
 for all $X\in \Mb$, with constant $A=A(\m)$ depending only on $\be$ and $C(\m)$ in \eqref{MazMeasCond}.
\end{lem}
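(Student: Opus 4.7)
The plan is to derive Lemma \ref{LemMaz} from Maz'ya's capacity-theoretic characterization of embeddings from Sobolev spaces into Orlicz spaces in the critical dimension case $2l=\Nb$, stated as Theorem 11.8 in \cite{MazBook}. That theorem asserts that \eqref{MazIneq1} is equivalent to a capacitary inequality of the form
\[
\|\1_E\|_{L^{\Phi,\m}} \le K \cdot \operatorname{cap}_{l,2}(E),
\]
uniformly over compact sets $E\subset\Om$, where $\operatorname{cap}_{l,2}$ denotes the Bessel capacity associated with $H^l$. My task reduces to checking that this capacitary inequality is equivalent to the Ahlfors-type bound \eqref{MazMeasCond}.

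For the necessity direction (\eqref{MazIneq1} $\Rightarrow$ \eqref{MazMeasCond}), I would test \eqref{MazIneq1} on the capacitary potential $v_{r,X_0}$ of a small ball $B(r,X_0)$ with $X_0\in\Mb$: one has $v_{r,X_0}\equiv 1$ on $B(r,X_0)$ and $\|v_{r,X_0}\|_{H^l}^2 = \operatorname{cap}_{l,2}(B(r,X_0))\sim 1/|\log r|$ as $r\to 0$, which is the classical critical-exponent asymptotics of Bessel capacity. Since $v_{r,X_0}^2 \ge \1_{B(r,X_0)}$, and since a direct computation from \eqref{exp.norm} gives $\|\1_E\|_{L^{\Phi,\m}}\sim 1/\log(1/\m(E))$ for small $\m(E)$, inequality \eqref{MazIneq1} yields $\log(1/\m(B(r,X_0)))\ge c|\log r|$, i.e.\ $\m(B(r,X_0))\le r^{\be}$ with $\be=c>0$.

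For the sufficiency direction (\eqref{MazMeasCond} $\Rightarrow$ \eqref{MazIneq1}), I would verify Maz'ya's capacitary inequality under the assumption \eqref{MazMeasCond}. Given a compact $E\subset\Om$, one covers $E$ by balls of radii $r_i$ whose total Bessel capacity is comparable to $\operatorname{cap}_{l,2}(E)$ via subadditivity properties of Bessel capacity at the critical exponent, and estimates each ball's $\m$-measure by \eqref{MazMeasCond}. The key comparison $\operatorname{cap}_{l,2}(B(r,X))\sim 1/|\log r|$ converts a $\sum 1/|\log r_i|$-bound on capacity into a $\sum r_i^{\be}$-bound on measure, leading to $\m(E)\le C\exp(-c/\operatorname{cap}_{l,2}(E))$; combining this with $\|\1_E\|_{L^{\Phi,\m}}\sim 1/\log(1/\m(E))$ yields the required bound.

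The main obstacle is the sufficiency direction: making rigorous the passage from the logarithmic decay of the critical Bessel capacity to an exponential decay of the $\m$-distribution functions of $H^l$-functions. In \cite{MazBook} this is carried out via Maz'ya's method of capacitary inequalities, combining a weak capacitary estimate for $v^2$ with Hedberg--Wolff type pointwise control; this is where the Ahlfors-type hypothesis \eqref{MazMeasCond} is decisively used, converting the borderline $1/|\log r|$ capacity bound into a polynomial bound on $\m$-measure.
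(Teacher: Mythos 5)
A preliminary point of comparison: the paper does not actually prove this lemma --- it is quoted verbatim (specialized to $p=2$, $l=\Nb/2$) as Corollary 11.8/2 of \cite{MazBook}, so the ``paper's proof'' is the citation itself. Your plan instead derives the statement from the capacitary criterion of Theorem 11.8, which means you have taken on precisely the content of Maz'ya's corollary rather than citing it. Your necessity direction is fine: testing \eqref{MazIneq1} on near-extremal potentials of small balls, together with $\operatorname{cap}_{l,2}(B(r,X))\sim|\log r|^{-1}$ and $\|\mathbf{1}_E\|_{L^{\Phi,\m}}\sim(\log(1/\m(E)))^{-1}$, does give $\m(B(r,X))\le C r^{\be}$ with $\be$ comparable to $1/A$.

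The gap is in the sufficiency direction. You propose to cover a compact set $E$ by balls ``whose total Bessel capacity is comparable to $\operatorname{cap}_{l,2}(E)$ via subadditivity''. Subadditivity gives $\operatorname{cap}_{l,2}(\bigcup B_i)\le\sum_i\operatorname{cap}_{l,2}(B_i)$, i.e.\ the inequality in the wrong direction: it does not produce a cover with $\sum_i\operatorname{cap}_{l,2}(B_i)\lesssim\operatorname{cap}_{l,2}(E)$. What is needed here is a quasi-additivity (or capacity-versus-content comparability) statement for the critical Bessel capacity, which is genuinely nontrivial and is nowhere supplied. Even granting such a cover, one still has to justify summing the ball contributions: since $r_i^{\be}\sim e^{-\be/\operatorname{cap}_{l,2}(B_i)}$, passing from $\sum_i e^{-\be/t_i}$ with $\sum_i t_i\lesssim\operatorname{cap}_{l,2}(E)$ to $e^{-c/\operatorname{cap}_{l,2}(E)}$ requires the superadditivity of $s\mapsto e^{-\be/s}$ near $0$; this step is also missing. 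In the literature the implication \eqref{MazMeasCond} $\Rightarrow$ \eqref{MazIneq1} is established by a different mechanism (exponential integrability of Bessel potentials of $L^2$ densities against measures with polynomial ball growth, an Adams--Moser--Trudinger-type trace inequality), not by covering. As written, the sufficiency half of your argument restates what has to be proved rather than proving it; the economical route, and the one the paper takes, is simply to invoke Corollary 11.8/2 of \cite{MazBook} as stated.
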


Now we can formulate the required  boundedness condition which follows immediately by combining \eqref{MazIneq1} and \eqref{Holder}.
\begin{proposition}\label{Th.boundedness} Let measure $P$ have the form $P=V\m,$ where $V$ is a real $\m$-measurable function on the support of $\m$. Suppose that  $\m$ satisfies condition \eqref{MazMeasCond} and $V\in L^{\Psi}$. Then the inequality
\begin{equation*}%\label{Ineq V}
     \left|\int |v|^2 V d\mu\right|\le CA(\m)\|v\|^2_{H^l(\Om)}\|V\|_{L^{\Psi}}
\end{equation*}
is satisfied for all $v\in {H}^l(\Om)$ with constant not depending on $V$.
\end{proposition}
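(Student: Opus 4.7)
The plan is to derive the required bound by a direct two-step composition: first decouple $v$ and $V$ via the H\"older inequality for Orlicz spaces, then absorb the resulting Orlicz norm of $v^2$ into a Sobolev norm via Maz'ya's embedding.

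\textbf{Step 1 (H\"older for the Orlicz pair $\Phi,\Psi$).} Since $V$ is $\m$-measurable and lies in $L^\Psi$, while $|v|^2$ is $\m$-measurable (smooth functions are $\m$-measurable, and we extend to $v\in H^l(\Om)$ by density at the end), I would apply inequality \eqref{Holder} to get
\begin{equation*}
\left|\int |v|^2 V\,d\m\right| \le C \, \bigl\|v^2\bigr\|_{L^{\Phi,\m}} \, \|V\|_{L^{\Psi,\m}},
\end{equation*}
with an absolute constant $C$ independent of $V$ and $v$. The point is that $\Phi$ and $\Psi$ are Orlicz-dual, so the pairing $L^{\Phi}\times L^{\Psi}\to L^1(\m)$ is continuous.

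\textbf{Step 2 (Maz'ya's embedding).} Under the hypothesis \eqref{MazMeasCond} on $\m$, Lemma \ref{LemMaz} applies and yields
\begin{equation*}
\bigl\|v^2\bigr\|_{L^{\Phi,\m}} \le A(\m)\,\|v\|_{H^l(\Om)}^2
\end{equation*}
for every $v\in H^l(\Om)$, with $A(\m)$ determined by $\beta$ and $C(\m)$ from \eqref{MazMeasCond}. Inserting this into the inequality from Step~1 gives
\begin{equation*}
\left|\int |v|^2 V\,d\m\right| \le C\,A(\m)\,\|v\|_{H^l(\Om)}^2\,\|V\|_{L^{\Psi}},
\end{equation*}
which is the claimed bound, with the constant independent of $V$.

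\textbf{Remarks on obstacles.} There is essentially no obstacle: the statement is a routine combination of the two displayed inequalities cited just above it. The only small care points are (i) checking that the H\"older inequality \eqref{Holder} is being applied to the correct dual pair $\Phi$/$\Psi$ (which is ensured by the explicit choice $\Psi(t)=(1+t)\log(1+t)-t$, $\Phi(t)=e^t-1-t$), and (ii) justifying the extension from $v\in C_0^\infty(\Om)$ to $v\in H^l(\Om)$ --- this is immediate since both sides of the final estimate are continuous in the $H^l$-topology and smooth functions are dense. No sharpness issue arises at this stage; the sharpness of the constants would have to be addressed separately if ever needed.
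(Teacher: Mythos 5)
Your proof is correct and is exactly the argument the paper intends: the Proposition is stated as an immediate combination of the Orlicz--H\"older inequality \eqref{Holder} with Maz'ya's embedding \eqref{MazIneq1} from Lemma \ref{LemMaz}. The density/extension remark is a reasonable bit of extra care but does not change the substance.
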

We return to operator $\Tb_{P,\AF}$ to obtain the boundedness condition.
\begin{thm} Let measure $\m$ satisfy \eqref{MazMeasCond}.
Then for any $V\in L^{\Psi},$ $P=V\m,$ operator $\Tb_{P,\AF}$ is bounded in $L_2(\Om)$ and
\begin{equation}\label{bddness}
    \|\Tb_{P,\AF}\|\le C(\AF)A(\m)\|V\|_{L^\Psi}.
\end{equation}
The constant $C(\AF)$ in \eqref{bddness} depends on operator $\AF$, and dimension $\Nb$, but not on the density   $V$.
\end{thm}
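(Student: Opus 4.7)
My plan is to deduce the theorem directly by composing Proposition \ref{Th.boundedness} with the Sobolev mapping property of $\AF$. The defining quadratic form of $\Tb_{P,\AF}$ is
\begin{equation*}
 \tb_{P,\AF}[u] = \int_\Om |(\AF u)(X)|^2 V(X) d\m(X) = \sbb[\AF u],
\end{equation*}
where $\sbb[v] = \int_\Om |v(X)|^2 V(X) d\m(X)$ is exactly the form controlled by Proposition \ref{Th.boundedness}. Thus the only remaining task is to estimate $\|\AF u\|_{H^l(\Om)}$ in terms of $\|u\|_{L_2(\Om)}$ and then translate a quadratic-form bound into an operator-norm bound.

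For the first of these, I would invoke the standard mapping property of pseudodifferential operators of negative order. Since $\AF$ has order $-l = -\Nb/2$ and is compactly supported (it maps $C_0^\infty(\Om)$ into $C_0^\infty(\Om)$), the composition $(1-\D)^{l/2}\AF$ is a properly supported PDO of order zero, and by the standard $L_2$-continuity theorem for zeroth-order PDOs it is bounded on $L_2$. This yields a constant $c_\AF$, depending only on $\AF$ and $\Nb$, such that $\|\AF u\|_{H^l(\Om)} \le c_\AF \|u\|_{L_2(\Om)}$ first for $u\in C_0^\infty(\Om)$ and then, by density, for every $u\in L_2(\Om)$. Combining this with Proposition \ref{Th.boundedness} gives
\begin{equation*}
 |\tb_{P,\AF}[u]| \le C\, A(\m) \|V\|_{L^\Psi} \|\AF u\|_{H^l(\Om)}^2 \le C\, c_\AF^2\, A(\m) \|V\|_{L^\Psi}\, \|u\|_{L_2(\Om)}^2,
\end{equation*}
so that $\tb_{P,\AF}$ is a bounded Hermitian quadratic form on $L_2(\Om)$.

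To conclude, I would polarize $\tb_{P,\AF}$ to a bounded sesquilinear form and apply the Riesz representation theorem (equivalently, the first representation theorem for bounded symmetric forms), obtaining a unique bounded self-adjoint operator $\Tb_{P,\AF}$ in $L_2(\Om)$ whose norm equals the norm of the quadratic form. Setting $C(\AF) = C\, c_\AF^2$ then yields \eqref{bddness}.

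I do not expect a serious obstacle. The only delicate point is ensuring that $\AF$ sends $L_2(\Om)$ into $H^l(\Om)$ (rather than merely into $H^l(\R^\Nb)$), but this is automatic from the compact-support hypothesis: the image $\AF u$ is supported in a fixed compact subset of $\Om$, so its $H^l(\Om)$-norm coincides with its $H^l(\R^\Nb)$-norm, and Proposition \ref{Th.boundedness} applies without modification.
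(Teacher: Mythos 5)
Your proposal is correct and follows the same route the paper takes (the paper leaves this step implicit, having already reduced the problem at the start of Section~\ref{Bddness.Sect} to the boundedness of $\sbb[v]$ on $H^l_0(\Om)$ precisely because $\AF$, being an order $-l$ compactly supported pseudodifferential operator, maps $L_2(\Om)$ boundedly into $H^l_0(\Om)$). Your filling-in of the standard details --- the $L_2$-continuity of the zeroth-order composition $(1-\D)^{l/2}\AF$, the polarization and representation of the bounded Hermitian form by a bounded self-adjoint operator, and the remark that compact support keeps $\AF u$ inside $\Om$ --- is exactly what the paper's ``follows immediately'' is relying on.
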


%--------------------------
We can now present a description  of the action of the operator $\Tb_{P,\AF},$
 similar to the one given, for an absolutely continuous measure $P$, in \cite{LSZ}.  If  inequality \eqref{MazIneq} is satisfied for all $v\in {H^l_0}$, it follows, by the usual polarization,
  that $$\left|\int_{\Om}w(X)\bar{v}(X)P(dX)\right|\le C \|v\|_{H^l}\|w\|_{H^l}, \, v,w\in H^l_0.$$ The latter  inequality means that for a fixed $v$, the integral on the left is a continuous functional of the function $w$ in $H^{l}_0$, therefore $\bar{v}(X)P\in H^{-l}(\Om)$ for $v=\AF u$, $u\in L_2(\Om)$.
 Consequently, the result of application of the order $-l$ operator $\AF^*$ to $\bar{v}(X)P$ belongs to $L_2$, and so
 the operator defined by the quadratic form $\tb_{P,\AF}$ in $L_2$ factorizes as a composition of bounded operators,

 \begin{equation}\label{Composition}
    \Tb_{P,\AF}: L_2(\Om)\overset{\AF}{\longrightarrow}H^{l}_0(\Om)\overset{P}{\longrightarrow}H^{-l}(\Om)\overset{\AF^{*}}{\longrightarrow}L_2(\Om).
 \end{equation}

This representation is a natural generalization of the one used, e.g., in \cite{LSZ}, \cite{SZSol},  however it is less convenient than \eqref{bddness} when establishing norm and eigenvalue estimates.
%----------------------------------------------

\section{Eigenvalue estimates}\label{Sect.Est}
In order to obtain eigenvalue estimates for operator $\Tb_{P,\AF},$ we need to impose additional assumptions on the measure $\m$.
\begin{defin} A Radon measure $\m$ on $\R^\Nb$ with compact support $\Mb$ is called Ahlfors $s$-regular, $s>0$, if for some $\Cb>0$ and any $X\in\Mb,$
\begin{equation}\label{Aregular}
    \Cb r^{s}\le \m(B(X, r))\le \Cb^{-1} r^{s},\, r\le\diam \Mb
\end{equation}
for all $X\in\Mb.$
\end{defin}

Such measure is equivalent to the $s$-dimensional Hausdorff measure $\Hc^s$ (see, e.g., \cite{DavSem}, Lemma 1.2) on the support of $\m$. Note that $s$-regular measures satisfy condition \eqref{MazMeasCond} with $\be=s$.

In the Orlicz space $L^{\Psi,\m}$, for a Borel set $E,$ we introduce the  norm,
 \begin{equation}\label{AvNorm}
   \|V\|_{E}^{(av,\Psi;\m)}=\sup\left\{\left|\int_{E\cap\Mb}V d\m\right|:\int\F(|g|)d\m\le\m(E\cap\Mb)\right\},
 \end{equation}
 if $\m(E\cap\Mb)>0,$ and $\|V\|_{E}^{(av,\Psi)}=0$ otherwise.
Such \emph{averaged} norms have been first introduced by M.Z. Solomyak in \cite{Sol94} and were being used since then  in the study of the eigenvalue distribution in the critical case. The norm \eqref{AvNorm} is equivalent to the standard norm in $L^{\Psi,\m}$ but the coefficient in the equivalence depends on the the measure $\mu$
(in fact, on $\m(E)$).
Our basic result on the eigenvalue estimates  is the following:
\begin{thm}\label{EstTheor} Let  measure $\m$ with compact support satisfy  condition \eqref{Aregular} with some $\a>0$ and let $V\in L^\Psi$. Let $\AF$ be an order $-l=-\Nb/2$ pseudodifferential operator with compact support. Then for the operator $\Tb_{V\m,\AF}$
the following eigenvalue estimate holds
\begin{equation}\label{BasicEstimate1}
n_{\pm}(\la,\Tb_{V\m,\AF})\le C(\m) C(\AF)\la^{-1}\|V_{\pm}\|_{\supp \m}^{(av,\Psi;\m)}.
\end{equation}
\end{thm}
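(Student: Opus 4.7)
My plan is to follow the variational, Birman--Solomyak strategy, adapted to the critical-order setting $2l=\Nb$ in which the Orlicz space $L^\Psi$ plays the role $L_1$ plays in the subcritical regime. The starting point is the factorization \eqref{factorization} underlying Proposition \ref{Th.boundedness}: writing $V = U^2 \sgn V$ with $U = |V|^{1/2}$, one has $\Tb_{V\mu,\AF} = (U\AF)^*(\sgn V)(U\AF)$, so by the min--max principle $|\lambda^\pm_k(\Tb_{V\mu,\AF})| \le s_k(U\AF)^2$, where $U\AF$ is viewed as a map $L_2(\Om) \to L_2(\Mb, d\mu)$. Because $\AF$ is bounded from $L_2(\Om)$ into $H^l(\Om)$, it suffices to control the singular numbers of the multiplication--embedding $\Jc_U : H^l(\Om) \to L_2(\Mb, d\mu)$, $\Jc_U v = Uv$.

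The second step is the local estimate. On any ball $B$ centered at a point of $\Mb$, combining the H\"older--Orlicz inequality \eqref{Holder} with Lemma \ref{LemMaz} applied locally yields, for $v \in H^l(\Om)$,
\begin{equation*}
\int_{B\cap\Mb} |v|^2 |V|\, d\mu \le C \|v^2\|_{L^{\Phi,\mu}(B)} \|V\|_B^{(av,\Psi;\mu)} \le C A(\mu) \|v\|_{H^l(B)}^2 \|V\|_B^{(av,\Psi;\mu)}.
\end{equation*}
On such a ball, $v$ admits a polynomial approximation of arbitrary degree $N$, giving a rank-$O(N^\Nb)$ approximation of $\Jc_U|_B$ whose remainder is bounded in operator norm by the local averaged Orlicz norm multiplied by a controlled tail factor.

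The third step is the decomposition of $\Mb$. Using Ahlfors $\alpha$-regularity \eqref{Aregular}, cover $\Mb$ by a family $\{Q_j\}$ of dyadic cubes of bounded overlap, with masses $\mu(Q_j)$ comparable; the cardinality of the cover at any given scale is controlled by $\Cb$ and $\diam \Mb$. Applying the local estimate on each $Q_j$ and combining the individual finite-rank approximations through the standard Birman--Solomyak summation of eigenvalue counting functions gives
\begin{equation*}
n_\pm(\lambda, \Tb_{V\mu,\AF}) \le C(\AF)\lambda^{-1} \sum_j \|V_\pm\|_{Q_j}^{(av,\Psi;\mu)},
\end{equation*}
with constant independent of the cover; the $\pm$ parts are separated using the signed factorization above.

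The final and most delicate step --- which I expect to be the chief obstacle --- is the Orlicz bookkeeping needed to collapse the sum $\sum_j \|V_\pm\|_{Q_j}^{(av,\Psi;\mu)}$ into the single global averaged norm $\|V_\pm\|_{\supp\mu}^{(av,\Psi;\mu)}$. Unlike $L_p$ norms, Orlicz norms are not additive over disjoint pieces, and a naive summation would lose a logarithmic factor that would destroy the dimension-independent rate. The averaged norm \eqref{AvNorm} is precisely designed --- originally by Solomyak in \cite{Sol94} --- so that its sum over a cover with comparable masses $\mu(Q_j)$ is controlled by the global averaged norm, up to a constant depending only on $\Cb$ in \eqref{Aregular}. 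This subadditivity, together with the Ahlfors regularity that furnishes the comparability of masses, forms the linchpin of the argument and the reason the $O(k^{-1})$ order survives independently of both $\Nb$ and $\alpha$.
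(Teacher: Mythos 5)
Your overall architecture is the right one --- it is precisely the Solomyak piecewise-polynomial scheme that the paper invokes (the paper itself gives no proof here, deferring to \cite{RSh2}, which follows \cite{KarSh} and \cite{Sol94}) --- and you correctly identify the two decisive ingredients: the local Maz'ya--Orlicz estimate and the behaviour of the averaged norm \eqref{AvNorm} under partitions. But your third step, as written, would fail. You claim that applying the local estimate on each cube $Q_j$ of a fixed-scale cover and then using ``the standard Birman--Solomyak summation of eigenvalue counting functions'' yields
\begin{equation*}
n_\pm(\lambda, \Tb_{V\mu,\AF}) \le C(\AF)\lambda^{-1} \sum_j \|V_\pm\|_{Q_j}^{(av,\Psi;\mu)}.
\end{equation*}
No such summation exists: the Weyl--Ky Fan inequality gives $n(\lambda_1+\lambda_2, T_1+T_2)\le n(\lambda_1,T_1)+n(\lambda_2,T_2)$, and distributing $\lambda=\sum\lambda_j$ over many pieces with $n(\lambda_j,T_j)\le C\lambda_j^{-1}a_j$ optimizes to $C\lambda^{-1}\bigl(\sum_j a_j^{1/2}\bigr)^2$, not $C\lambda^{-1}\sum_j a_j$. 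This is exactly the failure of the triangle inequality in $\SF_{1,\infty}$ that the paper remarks on immediately after the theorem; for a cover with unboundedly many pieces the loss is fatal.

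The repair is to use your final observation --- the superadditivity of the set function $E\mapsto \|V\|_E^{(av,\Psi;\mu)}$ over disjoint sets (which you call subadditivity of the sum; note it holds with constant $1$ for genuinely disjoint pieces, by gluing admissible test functions $g_j$) --- \emph{before} the spectral estimate rather than after. For each $n$ one constructs, by a stopping-time/halving argument resting on that superadditivity and on Ahlfors regularity (which guarantees the partition can be realized by cubes meeting $\Mb$ and that Lemma \ref{LemMaz} applies with a scale-uniform constant), a \emph{single adapted partition} of $\supp\mu$ into $O(n)$ pieces on each of which the local averaged norm is at most $\|V_\pm\|_{\supp\mu}^{(av,\Psi;\mu)}/n$. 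Taking the degree-$<l$ polynomial projections on all pieces simultaneously gives one approximant of rank $O(n)$ whose remainder has quadratic-form bound $\max_j$ of the balanced local quantities, i.e.\ $s_{Cn}(\Jc_U)^2\le C\|V_\pm\|^{(av,\Psi;\mu)}/n$, which is the claimed $O(\lambda^{-1})$ bound directly. Your fixed-scale cover with high-degree polynomials on each cube cannot substitute for this, because nothing balances the local norms across cubes and the weak-$\ell_1$ bookkeeping then breaks as above.
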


The proof is presented in detail in \cite{RSh2}. We note here only that it follows the pattern of the two-dimensional reasoning in \cite{KarSh}. In its turn, this  variational proof is based upon ideas used for obtaining a similar estimate in \cite{Sol94}, where an absolutely continuous measure $\m$ was considered. All of them have, as their starting point, the original  proof  of the CLR estimate, as presented in \cite{BirSolLect}.

It is convenient to eliminate further on the dependence of results on the domain $\Om$ and of the operator $\AF$ outside a neighborhood of the support of $\m$. This is done by means of the following estimate.
\begin{proposition}\label{Prop.Local}Let $\AF_1,$ $\AF_2$ be two   order $-l=-\Nb/2$ pseudodifferential operator in a bounded domain $\Om\in \R^\Nb$ with compact support such that $\AF_1 f=\AF_2 f$ in a neighborhood of $\Mb$  for $f$ supported in $\Om'$  and $\m$ be a finite Borel measure with support inside $\Om',$ $V\in L^{\Psi,\m}$ then

\begin{equation}\label{comparison}
    n_{\pm}(\la, \Tb_{P,\AF_1})-n_{\pm}(\la, \Tb_{P,\AF_2} )=o(\la^{-1}), \la\to 0.
\end{equation}
\end{proposition}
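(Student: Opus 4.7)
The plan is to prove that $D := \Tb_{P,\AF_1} - \Tb_{P,\AF_2}$ has singular numbers decaying faster than any polynomial, i.e.\ $s_k(D) = O(k^{-N})$ for every $N$; this gives in particular $n_{\pm}(\la, D) = o(\la^{-1})$, and together with the upper bound $n_{\pm}(\la, \Tb_{P,\AF_j}) = O(\la^{-1})$ from Theorem~\ref{EstTheor} and the Ky~Fan--type inequality $n_{\pm}(\la_1+\la_2, A+B)\le n_{\pm}(\la_1,A)+n_{\pm}(\la_2,B)$, this will yield~\eqref{comparison}.

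First I would set $\Bc := \AF_1 - \AF_2$ and choose a cutoff $\y \in C_0^{\infty}(\Om')$ with $\y \equiv 1$ on an open neighborhood $\Uc'$ of $\Mb$, where $\Uc'$ is contained in the neighborhood $\Uc$ of $\Mb$ on which $\Bc f$ vanishes for every $f$ supported in $\Om'$. Substituting $\AF_j u = \AF_j(\y u) + \AF_j((1-\y)u)$ (and similarly for $w$) into the polarized bilinear form
\[
\tb_{P,\AF_1}[u,w] - \tb_{P,\AF_2}[u,w] = \int_{\Mb}\bigl(\AF_1 u\,\overline{\AF_1 w} - \AF_2 u\,\overline{\AF_2 w}\bigr)\,V\,d\m,
\]
the "inner" contribution $\AF_j(\y u)\,\overline{\AF_j(\y w)}$, restricted to $\Mb$, is independent of $j\in\{1,2\}$ (since $\y u, \y w$ are supported in $\Om'$ and $\Mb\subset\Uc'\subset\Uc$), and thus cancels. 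What remains is a finite sum of cross terms, each carrying at least one factor of the form $(\AF_j((1-\y)\,\cdot))|_{\Mb}$ or $(\Bc((1-\y)\,\cdot))|_{\Mb}$.

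The decisive ingredient is pseudolocality: since $\supp(1-\y)\cap\Mb = \emptyset$ and $\AF_j,\Bc$ are pseudodifferential, every such "off-diagonal" operator is represented by a $C^{\infty}$ Schwartz kernel on a neighborhood of $\Mb \times \supp(1-\y)$. Factoring it through $H^m(\R^{\Nb})$ for arbitrarily large $m$, using the classical singular-number decay of the Sobolev embedding $H^m\hookrightarrow H^{\Nb/2}$, and composing with the quasi-trace inequality of Proposition~\ref{Th.boundedness} (in the role of a bounded trace $H^{\Nb/2}(\Om)\to L_2(\Mb,|V|d\m)$), one obtains a map $L_2(\Om)\to L_2(\Mb,|V|d\m)$ with singular numbers $O(k^{-N})$ for every $N$. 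Each cross term is the composition of such a smoothing factor with a factor merely bounded by Proposition~\ref{Th.boundedness}, so the product rule $s_k(AB)\le s_k(A)\|B\|$ transfers the rapid decay to the operator $D$ itself.

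The main delicate point I foresee is the quantitative $s_k = O(k^{-N})$ estimate for the integral operator with smooth off-diagonal kernel mapping into $L_2(\Mb,|V|d\m)$, where $\Mb$ may have fractional Hausdorff dimension and $V$ lies only in the Orlicz class $L^{\Psi,\m}$. The smoothness of the kernel is abstract; turning it into a quantitative singular-number bound requires factoring through a Sobolev tower and invoking Proposition~\ref{Th.boundedness} as a weighted trace theorem. Once that step is secured, the cutoff decomposition and the concluding Ky~Fan argument are routine.
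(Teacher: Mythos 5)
Your proof is correct, and it takes a genuinely different route from the paper's, although both rest on the same underlying localization philosophy. The paper inserts the cutoff on \emph{both} sides: it writes $\chi^2\AF_j u=\chi\AF_j\chi u-\chi[\AF_j,\chi]u$, notes that $\chi\AF_1\chi=\chi\AF_2\chi$, and disposes of the remaining terms by observing that the commutators $[\AF_j,\chi]$ are pseudodifferential of order $-l-1$, so the corresponding quadratic forms live on a supercritical Sobolev space $H^s$, $s>\Nb/2$, where Theorem 3.1 of \cite{BS} already gives singular values decaying faster than $k^{-1}$. You instead put the cutoff on the input side, use the hypothesis $\AF_1(\y u)=\AF_2(\y u)$ near $\Mb$ directly to cancel the inner terms, and handle the cross terms via pseudolocality of $\AF_j$ (smooth off-diagonal kernel), a Sobolev tower $L_2\to H^m\hookrightarrow H^{\Nb/2}$, and Proposition \ref{Th.boundedness} read as a weighted trace inequality. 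What your route buys is a stronger remainder, $s_k=O(k^{-N})$ for every $N$ rather than merely $o(k^{-1})$; what it costs is the extra machinery of off-diagonal kernel smoothness and the singular-number asymptotics of Sobolev embeddings, where the paper gets by with the single order-gain of a commutator and one citation. Note that both proofs share the same slightly informal concluding step: passing from ``the difference operator has $o(k^{-1})$ (or faster) singular values'' to the literal statement \eqref{comparison} via Ky Fan requires the usual asymptotic-perturbation reading (coincidence of the upper and lower Weyl coefficients, as in Lemma \ref{BSLemma}) rather than a pointwise subtraction of counting functions; since the paper argues at exactly this level, this is not a gap specific to your argument.
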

\begin{proof} Consider a cut-off function $\chi$ which equals $1$ in a sufficiently small neighborhood of $\Mb$ and equals zero outside another small neighborhood, so that operators $\chi\AF_1\chi$ and $\chi\AF_2\chi$ coincide.
The quadratic form of $\Tb_{P,\AF_j}$ is represented as
\begin{gather}\label{splitting1}
    \tb_{P,\AF_j}[u]=\int |\AF_j u|^2 P(dX)=\int |\chi^2\AF_j u|^2 P(dX)=\\\nonumber
    \int |\chi\AF_j\chi u|^2 P(dX)+\int 2 \re((-\chi[\AF_j,\chi] u)\overline{(\chi^2 \AF_j u)})P(dX)+ \int|\chi [\AF_j,\chi] u|^2 P(dX).
\end{gather}
In \eqref{splitting1}, the first term is the same for $\AF_1,\AF_2$, while the remaining terms contain commutators of $\chi$ with $\AF_j$, which are pseudodifferential operators of order $-l-1$. Quadratic forms with such operators in $L_2$, or, what is equivalent, quadratic forms $\int|v|^2 P(dX)$ in $H^s$, $s>\Nb/2$, have singular values decaying faster that $k^{-1},$ e.g., by Theorem 3.1 in \cite{BS}. Thus, operator $\Tb_{P,\AF_1},$ $\Tb_{P,\AF_2}$ differ by an operator with fast decaying singular values, and \eqref{comparison} follows from the Ky Fan inequalities.
\end{proof}
This property shows that the behavior of eigenvalues of our operators for a singular measure is determined by the operator $\AF$ restricted to arbitrarily small neighborhood of the support of the measure. Additionally, it enables localization of  operators, when considering  measures on manifolds. The same reasoning grants this kind of localization for the case when $\Om=\R^\Nb$ and $\AF=\AF_0=(1-\Delta)^{-l/2}.$

In Theorem \ref{EstTheor} and its consequences, it is important that  measure $\m$ has compact support. It is known since long ago that even for  $\m$ being the Lebesgue measure on $\R^\Nb,$ behavior of $V$ at infinity requires additional considerations (see, especially \cite{BLS} and \cite{KarSh}) and the contribution of infinity to the eigenvalue estimates may be stronger than the local one in \eqref{BasicEstimate1}. It this paper we are interested in local effects.

The eigenvalue estimate \eqref{BasicEstimate1} extends immediately by means of the Ky Fan inequality  to finite sums of  measures $P=\sum P_j$, $P_j=V_j\m_j$, where measures $\m_j$ may have different dimensions, e.g., satisfy \eqref{Aregular} with different values of $s$, including $s=\Nb$, the latter case corresponds to an absolutely continuous measure. However, the control over the constants in the estimates becomes rather cumbersome since the triangle inequality fails for the ideal $\SF_{1,\infty}$.

It follows from Theorem \ref{EstTheor} that operator $\Tb_{P,\AF}$ belongs to the ideal $\MF_{\1,\infty}$ and therefore singular Dixmier traces exist for $\Tb_{P,\AF}$.
We may not, however, declare at the moment that the operator $\Tb_{P,\AF}$ is measurable, without additional conditions imposed.

Results on eigenvalue estimates are easily carried over, by means of the same localization, to  spectral problems considered on closed manifolds.
 \begin{cor}\label{Cor.Man.Est} Let $\Mc$ be an $\Nb$-dimensional closed Riemannian manifold and $\m$ be a Borel measure, Ahlfors $s$-regular for some $s>0$. Let $V$ be a $\m$ - measurable real  function belonging to $L^{\Psi,\m}(\Mc)$ and $\AF$ be a pseudodifferential order $-\Nb/2$ operator on $\Mc$. Consider  operator $\Tb=\Tb_{P,\AF}$, $P=V\mu$. Then for the eigenvalues of this operator  estimate \eqref{BasicEstimate1} is valid, with constant not depending on the density $V.$
 \end{cor}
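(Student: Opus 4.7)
The plan is to reduce the manifold problem to the Euclidean one via a partition of unity argument, then invoke Theorem \ref{EstTheor} together with Proposition \ref{Prop.Local} to absorb the error terms. Since $\m$ has compact support (automatic here, as $\Mc$ is compact), cover $\supp\m$ by finitely many coordinate neighborhoods $\{\Uc_j\}_{j=1}^{N}$, each diffeomorphic via $\Fs_j$ to an open set $\Wc_j\subset\R^\Nb$, and fix a subordinate smooth partition of unity $\{\chi_j\}$ on a neighborhood of $\supp\m$, together with cutoffs $\widetilde{\chi}_j\in C_0^\infty(\Uc_j)$ equal to $1$ on $\supp\chi_j$.

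First I would decompose the measure as $\m=\sum_j \m_j$ with $\m_j:=\chi_j\m$ supported in $\Uc_j$, and correspondingly $P=\sum_j V\m_j$. A straightforward check shows that the pushforward $(\Fs_j)_*\m_j$ is still Ahlfors $s$-regular on $\Wc_j$ (with constants only depending on the bi-Lipschitz distortion of $\Fs_j$ and on $\Cb$ in \eqref{Aregular}), because the chart maps ball pairs to nested balls with controlled radius ratios. Moreover, the pullback of $V$ under $\Fs_j^{-1}$ remains in the Orlicz class $L^{\Psi,(\Fs_j)_*\m_j}$ with an equivalent averaged norm, since the Orlicz norm \eqref{AvNorm} is invariant under measure-preserving rescalings and transforms by a bounded multiplicative factor under a bi-Lipschitz change of variables.

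Next, in each chart I would express $\AF$ as $\AF=\widetilde\chi_j\AF\widetilde\chi_j+ R_j$ where $R_j$ acts as $\AF$ but with the support of $\widetilde\chi_j$ removed from one of the factors. Writing $\AF_j:=\widetilde\chi_j\AF\widetilde\chi_j$, in local coordinates on $\Wc_j$ this is an order $-\Nb/2$ compactly supported pseudodifferential operator on $\R^\Nb$ to which Theorem \ref{EstTheor} applies, giving
\begin{equation*}
n_{\pm}(\la,\Tb_{V\m_j,\AF_j})\le C(\m_j) C(\AF_j)\la^{-1}\|V_{\pm}\|_{\supp \m_j}^{(av,\Psi;\m_j)}.
\end{equation*}
The difference between $\Tb_{V\m_j,\AF}$ and $\Tb_{V\m_j,\AF_j}$ is controlled by Proposition \ref{Prop.Local}: the symbols agree on a neighborhood of $\supp\m_j$, so the difference contributes only $o(\la^{-1})$, which is absorbed into the bound (after a standard trick: if $n_\pm=o(\la^{-1})$ then $\la n_\pm\to 0$, so adding such a term to \eqref{BasicEstimate1} only modifies the constant).

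Finally, the Ky Fan inequality gives
\begin{equation*}
n_{\pm}(\la,\Tb_{P,\AF})\le \sum_{j=1}^{N} n_{\pm}(\la/N,\Tb_{V\m_j,\AF}),
\end{equation*}
and summing the individual estimates yields \eqref{BasicEstimate1} on $\Mc$, with a constant that depends on the atlas, the partition of unity, the Ahlfors constant and on $\AF$, but not on $V$. The main obstacle I anticipate is bookkeeping: verifying that the averaged Orlicz norms \eqref{AvNorm} on the pieces $\m_j$ are uniformly controlled by the global norm $\|V_{\pm}\|^{(av,\Psi;\m)}$, and checking that the failure of the triangle inequality in $\SF_{1,\infty}$ (noted in the text before the Corollary) does not inflate the bound beyond a fixed multiplicative constant. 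Both issues are routine once one uses that $N$ is finite and that the partition cutoffs are fixed independently of $V$.
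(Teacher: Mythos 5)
Your proposal is correct and follows essentially the same route as the paper: a finite cover of $\supp\m$ by coordinate neighborhoods, a splitting $P=\sum_j P_j$ of the measure, the Euclidean estimate of Theorem \ref{EstTheor} applied to each localized piece via the chart (with Proposition \ref{Prop.Local} absorbing the localization error), and Ky Fan's inequality to sum the contributions. The paper's own proof is only a brief sketch of exactly this argument; your version just fills in the bookkeeping.
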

 In a quite standard proof,
  we consider a finite covering by  neighborhoods $\Uc_j$ with co-ordinate mappings to domains in the Euclidean space. The measure $P$ thus splits into the sum $P=\sum P_j$, where $P_j$ is supported inside $\Uc_j$. Operator $\AF^*P\AF$ thus splits into the sum $\sum \AF^*P_j\AF$, and the required eigenvalue estimate follows from estimates for these summands by means of  Ky Fan's inequality. In its turn, eigenvalue estimate for $\AF^*P_j\AF$
follows from the Euclidean result by usual localization.

\section{Examples, applications}\label{Sect.Examples.est}
\subsection{One-dimensional examples} The results about estimates are nontrivial even in dimension 1. As an illustration, we consider the weighted Steklov (Dirichlet-to Neumann) and transmission spectral problems with  weight being singular measure. Such problems, with  weight being a function in the Orlicz class, were considered in \cite{ShStokes}.

Let $\Om\subset\R^2$ be a bounded simply connected domain with \emph{smooth} boundary. We suppose that  $\m$ is a measure on the boundary $\Si=\partial\Om$ and  $V$ is a $\m$-measurable real function on $\Si$, $P=V\m$. We consider the eigenvalue problem
\begin{equation}\label{weighted.Steklov}
    \Delta u(X)=0, \, X\in\Om; u(X) P=\la \partial_\n(X) u(X), X\in\Si,
\end{equation}
where $\n(X)$ is the external normal at $X\in\Si.$ Equation \eqref{weighted.Steklov} understood in the sense of distributions.
This problem admits the following exact formulation. We denote by $\Dc\!\!\Nc$ the Dirichlet-to-Neumann operator on $\Si$, namely
\begin{equation*}%\label{DirtoN}
    (\Dc\!\!\Nc h)(X)= \partial_{\n(X)} u(X), \,\mbox{where} \, \Delta u=0, u|_{\Si}=h.
\end{equation*}
It is known that $\Dc\!\!\Nc$ is an order $1$ positive elliptic pseudodifferential operator on $\Si,$ with principal symbol $|\x|, $ $(X,\x)\in T^*\Si$.  Then the problem \eqref{weighted.Steklov} can be expressed as
\begin{equation*}%\label{WeightedSt.1}
    \la \Dc\!\!\Nc h=P h,
\end{equation*}
or, in our variational setting, the eigenvalue problem for the operator $\Tb_{P,\AF}$ defined by the quadratic form
\begin{equation}\label{WeightedSt.2}
    \tb_{P,\AF}[h]=\int_{\Si}|(\AF h)(x)| P(dx), \, \AF=(\Dc\!\!\Nc)^{-\frac12}.
\end{equation}
Suppose that  measure $\m$ is $s-$Ahlfors regular of dimension $s\in(0,1]$ (the case of $s=1$ corresponds to the measure being absolutely continuous with respect to the Lebesgue measure on $\Si$.) Then Theorem \ref{EstTheor} gives us the following eigenvalue estimate.
\begin{cor}\label{CorStekl} Let $V$ be a real function in $L^{\Psi,\m}$. Then for the eigenvalues of the problem \eqref{weighted.Steklov},
\begin{equation*}%\label{EstimSteklov}
    n_{\pm}(\la)\le C \la^{-1}\|V_{\pm}\|^{\Psi,\m}_{\Si}.
\end{equation*}
\end{cor}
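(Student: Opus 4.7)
The plan is to apply the closed-manifold eigenvalue estimate (Corollary \ref{Cor.Man.Est}) directly to the one-dimensional manifold $\Si = \pd\Om$, with the role of the order $-\Nb/2$ pseudodifferential operator played by $\AF = (\Dc\!\!\Nc)^{-1/2}$. First I would verify that the setup of Section \ref{Sect.Est} applies with $\Nb = 1$: since $\Om\subset\R^2$ has smooth boundary, $\Si$ is a smooth closed one-dimensional Riemannian manifold. As recalled just before the statement, $\Dc\!\!\Nc$ is a positive elliptic classical pseudodifferential operator of order $+1$ on $\Si$ with principal symbol $|\x|$; by the standard Seeley complex-power calculus for positive elliptic pseudodifferential operators, $\AF = (\Dc\!\!\Nc)^{-1/2}$ is then a classical pseudodifferential operator on $\Si$ of order $-1/2$, which matches precisely the critical order $-\Nb/2$ for $\Nb = 1$.

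Next I would identify the variational Steklov problem with the abstract object of Sections \ref{Bddness.Sect}--\ref{Sect.Est}. The quadratic form \eqref{WeightedSt.2} is exactly the form \eqref{quadrFormSing} (up to the squaring of the modulus, which is implicit once one polarizes / works with self-adjoint densities) specialized to the closed manifold $\Si$, pairing $\AF = (\Dc\!\!\Nc)^{-1/2}$ with the measure $P = V\m$. The distributional formulation \eqref{weighted.Steklov} rewrites, via $h = u|_\Si$ and $\pd_\n u = \Dc\!\!\Nc h$, as $\la\,\Dc\!\!\Nc h = Ph$ on $\Si$, and in turn as $\la h = (\Dc\!\!\Nc)^{-1/2} P (\Dc\!\!\Nc)^{-1/2}\,h = \Tb_{P,\AF}\,h$ in $L_2(\Si)$, so that $n_\pm(\la)$ in the statement coincides with $n_\pm(\la, \Tb_{P,\AF})$.

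With the identification made, I would simply invoke Corollary \ref{Cor.Man.Est}: the hypothesis of Ahlfors $s$-regularity with $s\in(0,1]$ implies \eqref{MazMeasCond} with $\be = s > 0$, and $V \in L^{\Psi,\m}$ by assumption. The corollary then yields
\begin{equation*}
n_\pm(\la, \Tb_{P,\AF}) \;\le\; C(\m)\,C(\AF)\,\la^{-1}\,\|V_\pm\|^{(av,\Psi;\m)}_{\Si}.
\end{equation*}
Since the averaged norm \eqref{AvNorm} is equivalent to the standard Orlicz norm $\|\cdot\|_{L^{\Psi,\m}(\Si)}$ (the equivalence constant depending only on $\m(\Si)$, which is fixed here), this last estimate absorbs into the stated bound $n_\pm(\la)\le C\la^{-1}\|V_\pm\|^{\Psi,\m}_\Si$, finishing the argument.

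There is no real obstacle: the corollary is essentially an immediate specialization of Corollary \ref{Cor.Man.Est}. The only point requiring a line of justification is the verification that $(\Dc\!\!\Nc)^{-1/2}$ is indeed a classical pseudodifferential operator of the critical order $-\Nb/2 = -1/2$ on the one-dimensional manifold $\Si$, which is a textbook consequence of ellipticity and positivity of $\Dc\!\!\Nc$.
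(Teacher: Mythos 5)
Your argument is correct and follows essentially the same route as the paper: the paper reduces the weighted Steklov problem to the operator $\Tb_{P,\AF}$ with $\AF=(\Dc\!\!\Nc)^{-1/2}$ of order $-1/2=-\Nb/2$ on the one-dimensional closed manifold $\Si$ and then invokes the eigenvalue estimate of Theorem \ref{EstTheor} (in its closed-manifold form, i.e.\ Corollary \ref{Cor.Man.Est}), exactly as you do. Your additional remarks on the Seeley calculus for $(\Dc\!\!\Nc)^{-1/2}$ and on the equivalence of the averaged and standard Orlicz norms are the right points to check and are consistent with what the paper tacitly uses.
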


A similar result is valid for the transmission problem. Let, again, $\Om\subset\R^2$ be a bounded, simply connected domain with smooth boundary and $\Si$ be a  simple smooth curve inside $\Om$. For a function $u\in H^2(\Om\setminus \Si)\cap H^1(\Om),$
we denote by $[u_{\n(X)}]$ the jump of the normal derivative of $u$ at the point $X\in\Si.$ As above, $\m$ is a measure on $\Si$, $P=V\mu.$
We consider the spectral transmission problem

\begin{equation}\label{trans.weight}
  \D u=0,\, X\in\Om\setminus\Si;\,  u(X)P=\la [u_{\n(X)}], X\in \Si; \, u_{\partial \Om}=0\, \mbox{in} \, \Si.
\end{equation}
This kind of transmission problems is considered, e.g., in \cite{Agr}, \cite{AgrKatz}, motivated, in particular, by some physics applications. Similar to the reasoning above,
problem \eqref{trans.weight} can be transformed to the eigenvalue problem for the operator $\Tb_{P,\AF},$
defined in $L_2(\Si)$ by the quadratic form \eqref{WeightedSt.2},
where $\AF=\TF^{-\frac12}$ and $\TF$ is  the 'transmission operator'
 $$\TF: h\mapsto [u_{\n(X)}], X\in \Si;\, \D u= 0 \,\mbox{in} \, \Om\setminus\Si, u|_{\partial\Om}=0; u|_{\Si}=h.$$
 Again, $\AF$ is an order $-\frac12$ pseudodifferential operator on $\Si$, and the spectral problem fits in our general setting.

\begin{cor}\label{CorSteklAs} Suppose that  measure $\m$ is Ahlfors regular of dimension $s\in(0,1)$ and let $V$ be a real function in $L^{\Psi,\m}$. Then for the eigenvalues of the problem \eqref{trans.weight},
\begin{equation}\label{EstimSteklov}
    n_{\pm}(\la, \Tb_{P,\AF})\le C \la^{-1}\|V_{\pm}\|^{\Psi,\m}_{\Si}.
\end{equation}
\end{cor}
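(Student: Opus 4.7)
The plan is to recognize that Corollary \ref{CorSteklAs} is, once the variational reformulation is in place, a direct specialization of Corollary \ref{Cor.Man.Est} to the one-dimensional closed manifold $\Si$. The only substantial work is to verify that the transmission operator $\TF$ has the right pseudodifferential structure, so that $\AF = \TF^{-1/2}$ qualifies as an order $-\Nb/2$ pseudodifferential operator on $\Si$ with $\Nb = \dim\Si = 1$.

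First, I would establish the mapping and pseudodifferential properties of $\TF$. For $h \in H^{1/2}(\Si)$, the boundary value problem $\D u = 0$ in $\Om \setminus \Si$, $u|_{\pd\Om}=0$, $u|_\Si = h$ has a unique solution $u = u_h \in H^1(\Om) \cap H^2(\Om\setminus\Si)$ by standard elliptic theory, and $\TF h = [u_{\n(X)}]$ lies in $H^{-1/2}(\Si)$. A layer-potential representation (or, equivalently, writing $\TF$ as the sum of the two one-sided Dirichlet-to-Neumann maps on either side of $\Si$, modulo a smoothing operator coming from the interaction with $\pd\Om$) shows that $\TF$ is a positive, elliptic, order $1$ classical pseudodifferential operator on $\Si$, with principal symbol $2|\x|$ (twice the principal symbol of each one-sided $\Dc\!\!\Nc$). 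In particular $\TF$ is invertible modulo a finite-dimensional kernel, and the square-root construction via functional calculus for elliptic positive pseudodifferential operators yields that $\AF := \TF^{-1/2}$ is an order $-1/2$ classical pseudodifferential operator on $\Si$.

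Next, I would check that the transmission eigenvalue problem \eqref{trans.weight} really reduces to the eigenvalue problem for $\Tb_{P,\AF}$ defined by the quadratic form \eqref{WeightedSt.2}. Testing \eqref{trans.weight} against a harmonic extension of a test function $g$ on $\Si$ and integrating by parts in $\Om\setminus\Si$ gives $\langle \TF h, g\rangle_{\Si} = \la^{-1} \int_\Si h \bar g\, P(dX)$. With $h = \AF w = \TF^{-1/2} w$ and $g = \AF v$, this rewrites as $\langle w,v\rangle_{L_2(\Si)} = \la^{-1}\, \tb_{P,\AF}[w,v]$, i.e. the eigenvalues of \eqref{trans.weight} are exactly $\la^{-1} = \mu$ where $\m$ are the eigenvalues of the self-adjoint operator $\Tb_{P,\AF}$ in $L_2(\Si)$ associated with the quadratic form \eqref{WeightedSt.2}. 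Thus counting functions coincide.

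Finally, since $\m$ is Ahlfors $s$-regular on $\Si$ with $s \in (0,1)$ and $V\in L^{\Psi,\m}$, and since $\AF$ is an order $-\Nb/2$ pseudodifferential operator on the closed manifold $\Si$ (with $\Nb=1$), all hypotheses of Corollary \ref{Cor.Man.Est} are satisfied; invoking it yields the eigenvalue estimate \eqref{EstimSteklov} directly. The main technical obstacle in this plan is the pseudodifferential description of $\TF$ as a genuine order $1$ elliptic operator on $\Si$; once that is in hand, the reduction to the general theorem is mechanical, and Proposition \ref{Prop.Local} allows us to ignore the influence of $\pd\Om$ and of the global geometry away from $\Si$.
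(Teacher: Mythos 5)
Your proposal is correct and follows essentially the same route as the paper: identify $\TF$ as a positive elliptic order-$1$ pseudodifferential operator on the closed curve $\Si$ (so that $\AF=\TF^{-1/2}$ has order $-1/2=-\Nb/2$ with $\Nb=\dim\Si=1$), reduce the transmission problem to the eigenvalue problem for $\Tb_{P,\AF}$ given by the quadratic form \eqref{WeightedSt.2}, and invoke the general estimate of Theorem \ref{EstTheor} in its closed-manifold form, Corollary \ref{Cor.Man.Est} --- the paper merely asserts these reductions, while you supply the layer-potential justification. One cosmetic slip: your own identity $\langle w,v\rangle=\la^{-1}\tb_{P,\AF}[w,v]$ shows that the transmission eigenvalue $\la$ coincides with the eigenvalue of $\Tb_{P,\AF}$ itself (not with its reciprocal), which is exactly what is needed for the counting functions to agree as you conclude.
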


In the case $s=1$ for the weighted Steklov problem, this kind of estimates was obtained in \cite{ShStokes}. There, for $V\ge 0,$ a lower estimate for  $n_{+}(\la, \Tb_{P,\AF})$
was established as well, of the same order in $\la$ but in terms of the $L_1$ norm of the function $V$. General lower estimates for eigenvalues are discussed later on, in Section \ref{lower}.

 \subsection{Fractal sets} We recall the general construction of fractal sets, introduced  by J.Hutchinson, \cite{Hu}. Let $ \pmb{\Sc} = \{\Sc_1, ...\Sc_m\}$ be a finite collection  of contractive similitudes (i.e., compositions of a parallel shift, a linear isometry and a contracting homothety)
on $\R^{\Nb}$, $h_1, ..., h_m$ are their coefficients of contraction.  It is supposed that \emph{the open set condition} is satisfied: there exists an open set $\Vs\subset \R^\Nb$
 such that $\cup \Sc_\io(\Vs)\subset \Vs$ and $\Sc_\io(\Vs)\cap \Sc_{\io'}(\Vs)=\varnothing, \io\ne \io'.$ By the results of Sect. 3.1 (3), 3.2 in \cite{Hu}, there exists a unique compact set $\K=\K(\pmb{\Sc})$ satisfying $\K=\cup_{j\le m} \Sc_j\K.$ This set is, in fact,  the closure of the set of all fixed points of finite compositions of the mappings $\Sc_j.$ The Hausdorff dimension $d$ of the set $\K(\pmb{\Sc})$ is determined by the equation $\sum h_j^d=1 .$ Let $\m$ be the $d$-dimensional Hausdorff measure $\m_{\pmb{\Sc}}$ on $\K(\pmb{\Sc})$. As explained in \cite{Fra}, Corollary 2.11.(1), p.6696, this measure is Ahlfors regular of dimension $d$. Therefore, our result, Theorem \ref{EstTheor}, gives the upper eigenvalues estimate:
\begin{cor}\label{FractESt} Let $\m=\m( \pmb{\Sc})$ be a fractal measure as above, with bounded set $\Om\subset\R^{\Nb}$. Suppose that a density $V$ belongs to the Orlicz space $L^{\Psi,\mu}$; $P=V\mu$ and $\AF$ be an order $-l=-\Nb/2$ pseudodifferential operator in $\Om$ with compact support. Then  operator $\Tb_{P,\AF}$ belongs to $\SF_{1,\infty}$  and for   its eigenvalues  the following estimate holds
\begin{equation*}%\label{FractEigenvalues}
   n_{\pm}(\la, \Tb_{P,\AF})\le C \la^{-1} \|V_{\pm}\|_{\K}^{(av,\Psi,\m)}.
\end{equation*}
\end{cor}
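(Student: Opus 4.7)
The plan is to reduce the statement directly to Theorem \ref{EstTheor}, since the corollary is essentially an application of that eigenvalue estimate to the specific class of self-similar measures built from $\pmb{\Sc}$. The two ingredients I need to verify are (i) that the fractal measure $\m_{\pmb{\Sc}}$ is Ahlfors $d$-regular for $d$ the similarity dimension (so that the hypothesis \eqref{Aregular} of Theorem \ref{EstTheor} holds), and (ii) that its support $\K(\pmb{\Sc})$ is compact and lies inside a set in which $\AF$ has compact support, so the localization machinery of Section \ref{Bddness.Sect} applies.

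First I would recall from \cite{Hu}, Sect.\ 3.1--3.2, that under the open set condition the invariant set $\K=\K(\pmb{\Sc})$ is a nonempty compact subset of $\R^{\Nb}$, and the $d$-dimensional Hausdorff measure $\Hc^d\lfloor_\K$ is (up to a positive constant factor) the unique self-similar Borel probability measure associated with $\pmb{\Sc}$, where $d\in(0,\Nb]$ is the unique solution of $\sum_{j\le m}h_j^d=1$. Next I would invoke the result cited as Corollary 2.11(1) of \cite{Fra}: under the open set condition, $\m_{\pmb{\Sc}}$ is Ahlfors regular of dimension $d$, i.e.\ there exists $\Cb>0$ such that
\begin{equation*}
\Cb\, r^d\le \m_{\pmb{\Sc}}(B(X,r))\le \Cb^{-1} r^d,\qquad X\in\K,\; 0<r\le\diam\K.
\end{equation*}
In particular, $\m_{\pmb{\Sc}}$ satisfies the measure condition \eqref{MazMeasCond} with $\be=d$, and Ahlfors regularity \eqref{Aregular} holds with exponent $s=d$.

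With this verification in hand, the hypotheses of Theorem \ref{EstTheor} are satisfied: $\m=\m_{\pmb{\Sc}}$ has compact support contained in $\Om$, $V\in L^{\Psi,\m}$, and $\AF$ is a compactly supported pseudodifferential operator of order $-\Nb/2$. The theorem therefore yields directly
\begin{equation*}
n_{\pm}(\la,\Tb_{P,\AF})\le C(\m)\,C(\AF)\,\la^{-1}\,\|V_{\pm}\|_{\K}^{(av,\Psi;\m)},
\end{equation*}
which is the required estimate. The membership $\Tb_{P,\AF}\in\SF_{1,\infty}$ is then immediate: rewriting the above as $s_k(\Tb_{P,\AF})\le C/k$ via the standard equivalence between bounds on the counting function and bounds on singular numbers, we obtain the defining condition of $\SF_{1,\infty}$, and hence in particular $\Tb_{P,\AF}\in\MF_{1,\infty}$.

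There is essentially no genuine obstacle here beyond correctly citing the Ahlfors regularity of self-similar measures under the open set condition; all of the analytic content (the variational argument, the handling of the Orlicz-type averaged norm, and the localization with respect to $\Om$ and $\AF$) has already been absorbed into Theorem \ref{EstTheor} and Proposition \ref{Prop.Local}. The only point at which one might wish to be a touch more careful is the constant $C(\m)$: it depends on the Ahlfors regularity constant $\Cb$ and the diameter of $\K$, both of which are determined by $\pmb{\Sc}$ through the contraction ratios $h_1,\ldots,h_m$, so the constant $C$ in the conclusion of the corollary is controlled in terms of $\pmb{\Sc}$, $\Nb$, and $\AF$ but does not depend on $V$, as asserted.
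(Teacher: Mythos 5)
Your proposal is correct and follows essentially the same route as the paper: the whole content of the corollary is the observation (via Corollary 2.11(1) of \cite{Fra}) that the self-similar measure $\m_{\pmb{\Sc}}$ under the open set condition is Ahlfors $d$-regular with compact support $\K$, after which Theorem \ref{EstTheor} applies verbatim and the counting-function bound is equivalent to membership in $\SF_{1,\infty}$. Your additional remarks on the dependence of the constant on $\pmb{\Sc}$ are consistent with the paper's statement.
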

\subsection{Lipschitz surfaces} Let the set $\Si\subset \R^\Nb$ be a compact Lipschitz surface of dimension $d>0$ and codimension $\dF=\Nb- d.$ Recall that this means that locally $\Si$ can be, in proper co-ordinates $X=(\xb;\yb)=(x_1,\dots,x_d;y_1,\dots y_\dF),$ represented as $\yb=\pmb{\phi}(\xb), \, \xb\in G \subset\R^{d}$ with a Lipschitz $\dF$-component vector-function $\pmb{\phi}.$  Denote by $\m_\Si$ the measure on $\Si$ generated by the embedding $\Si\to\R^\Nb$ - it coincides with the $d$ - dimensional Hausdorff measure $\Hc^{d}$. By the Rademacher theorem the gradient of $\pmb{\phi}$ exists almost everywhere with respect to $\m_\Si$. So, locally, $\m_\Si$ has the form
 $$d\m_\Si=\s(\xb)d\xb, \, \s(\xb)=\det(\pmb{1}+(\nabla\pmb{\phi})^*\nabla\pmb{\phi})^{\frac12}.$$

  The embedding of $\Si$ into $\R^\Nb$ generates a singular measure on $\R^\Nb$, supported on $\Si$ which will be also denoted
$\m_\Si$, as long as this does not cause confusion. Such measures satisfy condition \eqref{Aregular} with $s=d$. Therefore, for measure $P=V\!\mu_{\Si},$ the eigenvalue estimates obtained in Sect. \ref{Sect.Est} hold. For further reference, we formulate two important cases.
\begin{thm}\label{Est.Lip.Th} Let $\Si$ be a $d$-dimensional compact Lipschitz surface in $\R^\Nb$ and $V\in L^{\Psi,\m}(\Si).$
If $\AF$ is an order $-l=-\Nb/2$ pseudodifferential operator in $\R^{\Nb}$ with compact support or $\AF=(1-\D)^{-l/2}$, then for the eigenvalues of the operator $\Tb_{V,\m_\Si,\AF}$ the eigenvalue estimate is valid:
\begin{equation}\label{est.Lip.}n_{\pm}(\la,\Tb_{V,\m_\Si,\AF})\le C(\m)C(\AF)\la^{-1}\|V\|^{(av,\Psi,\m)}.
\end{equation}
\end{thm}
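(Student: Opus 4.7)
The plan is to derive Theorem~\ref{Est.Lip.Th} as a direct consequence of Theorem~\ref{EstTheor} and the localization Proposition~\ref{Prop.Local}. The only real work is to verify that $\m_\Si$ fits the hypotheses of Theorem~\ref{EstTheor} and to handle the noncompactly-supported case $\AF=(1-\D)^{-l/2}$ by cutting off.

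First I would check that $\m_\Si$ is Ahlfors $d$-regular. Using a finite atlas of Lipschitz charts $\yb=\fib(\xb)$, $\xb\in G\subset\R^d$, the density $\s(\xb)=\det(\pmb{1}+(\nabla\fib)^*\nabla\fib)^{1/2}$ is bounded between two positive constants almost everywhere (by the Lipschitz bound on $\fib$ together with $\s\ge 1$). For any $X\in\Si$ and $r\le\diam\Si$, the intersection $\Si\cap B(X,r)$ projects bi-Lipschitzly onto a subset of $\R^d$ comparable in $d$-measure to a disk of radius $r$, so
\[
\Cb r^d\le \m_\Si(B(X,r))\le \Cb^{-1}r^d,
\]
which is \eqref{Aregular} with $s=d$. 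Patching the finite local estimates by a partition of unity controls the global constant.

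For the first case, where $\AF$ is an order $-l=-\Nb/2$ pseudodifferential operator with compact support, Theorem~\ref{EstTheor} applies immediately with $s=d$ and delivers exactly \eqref{est.Lip.}. For the second case, $\AF=\AF_0=(1-\D)^{-l/2}$ on $\R^\Nb$, I would invoke the localization principle: choose $\chi\in C_0^\infty(\R^\Nb)$ with $\chi\equiv 1$ in a neighborhood of $\Si$ and set $\AF_1=\chi\AF_0\chi$. Then $\AF_1$ is a compactly supported pseudodifferential operator of order $-l$ that agrees with $\AF_0$ near $\supp\m_\Si$, so by Proposition~\ref{Prop.Local} (extended to $\AF_0$ on $\R^\Nb$ as noted in the remark following its proof),
\[
n_\pm(\la,\Tb_{V\m_\Si,\AF_0})=n_\pm(\la,\Tb_{V\m_\Si,\AF_1})+o(\la^{-1}),\quad \la\to 0.
\]
Applying the first case to $\AF_1$ absorbs the $o(\la^{-1})$ into the right-hand side, yielding \eqref{est.Lip.} in the form claimed.

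There is no substantive obstacle here; the theorem is essentially a packaging statement. The only point that requires a minor check is that the constant in the final estimate factors as $C(\m)C(\AF)$ independently of $V$: the $\m$-dependence comes from the Ahlfors constant $\Cb$ entering $A(\m)$ of Lemma~\ref{LemMaz}, the $\AF$-dependence comes from operator-theoretic bounds on $\AF$ and on the commutators $[\AF,\chi]$ appearing in the localization, and the linearity of the quadratic form $\tb_{P,\AF}$ in $P$ ensures the $V$-dependence is captured entirely by the averaged Orlicz norm $\|V\|^{(av,\Psi,\m)}$.
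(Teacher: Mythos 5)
Your proof follows the paper's route exactly: the paper likewise observes that the induced measure $\m_\Si$ on a compact Lipschitz surface satisfies the Ahlfors regularity condition \eqref{Aregular} with $s=d$ and then invokes Theorem \ref{EstTheor}, handling $\AF=(1-\D)^{-l/2}$ via the localization remark following Proposition \ref{Prop.Local}. Your explicit verification of Ahlfors regularity through local Lipschitz graphs is the standard argument the paper leaves implicit, so the proposal is correct and essentially the same as the paper's (very brief) justification.
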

Localization of the first case of Theorem \ref{Est.Lip.Th} provides us with an eigenvalue estimate for operator on compact manifolds.
\begin{cor}Let $\Mc$ be a smooth closed Riemannian manifold of dimension $\Nb$ and $\Si$ be a $d$-dimensional compact Lipschitz surface in $\Mc$. Then for $V\in L^{\Psi,\m}(\Si)$ and $\AF=(1-\D_{\Mc})^{-\Nb/4},$  estimate \eqref{est.Lip.} holds.
\end{cor}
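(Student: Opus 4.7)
The plan is to reduce the statement to the Euclidean version in Theorem \ref{Est.Lip.Th} by the localization technique already used for Corollary \ref{Cor.Man.Est}. I would begin by choosing a finite atlas $\{(\Uc_j, \Fs_j)\}_{j=1}^{J}$ of $\Mc$ whose charts cover the compact Lipschitz surface $\Si$, together with a smooth partition of unity $\{\chi_j\}$ subordinate to the covering, chosen so that $\supp \chi_j \subset \Uc_j$ and $\sum_j \chi_j^2 \equiv 1$ in a neighborhood of $\Si$. Splitting the measure as $P = \sum_{j} P_j$ with $P_j = \chi_j^2 V \m_\Si$ gives the quadratic form decomposition $\tb_{P,\AF}[u] = \sum_j \tb_{P_j,\AF}[u]$, and correspondingly $\Tb_{P,\AF} = \sum_j \Tb_{P_j,\AF}$, with each $P_j$ compactly supported inside $\Uc_j$.

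Next, for each $j$, Proposition \ref{Prop.Local} allows me to replace $\AF = (1-\D_\Mc)^{-\Nb/4}$ by any compactly supported pseudodifferential operator $\AF_j$ of order $-\Nb/2$ that agrees with $\AF$ on a neighborhood of $\supp P_j$, at the cost of an error whose singular numbers decay faster than $k^{-1}$. Transporting $\AF_j$ and $P_j$ through the diffeomorphism $\Fs_j$ to $\Wc_j \subset \R^\Nb$, the pulled-back operator $\Fs_j^*\AF_j$ is again an order $-\Nb/2$ pseudodifferential operator with compact support in $\Wc_j$. The pulled-back measure $\Fs_j^* P_j$ has the form $\tilde V_j\, \m_{\tilde\Si_j}$, where $\tilde\Si_j = \Fs_j(\Si\cap \Uc_j)$ is still a Lipschitz surface of dimension $d$ in $\R^\Nb$ (since Lipschitz graphs are preserved under $C^\infty$ diffeomorphisms, up to adjusting the Lipschitz constant), and $\tilde V_j$ is a $\m_{\tilde\Si_j}$-measurable real function lying in $L^{\Psi,\m_{\tilde \Si_j}}$. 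Theorem \ref{Est.Lip.Th} then gives
\begin{equation*}
n_{\pm}(\la,\Tb_{P_j,\AF_j}) \le C(\m)C(\AF)\la^{-1} \|\tilde V_j\|^{(av,\Psi,\m)}_{\tilde\Si_j}.
\end{equation*}

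Finally, I would combine the estimates over $j$ using the Ky Fan inequality $n_\pm(\la_1+\la_2, \Tb_1+\Tb_2) \le n_\pm(\la_1,\Tb_1)+n_\pm(\la_2,\Tb_2)$ iteratively, choosing $\la_j = \la/J$, so that
\begin{equation*}
n_\pm(\la,\Tb_{P,\AF}) \le \sum_{j=1}^J n_\pm(\la/J,\Tb_{P_j,\AF}) + o(\la^{-1}),
\end{equation*}
the $o(\la^{-1})$ term absorbing the commutator errors from Proposition \ref{Prop.Local}. The main technical point I expect to need care with is the comparison between the averaged Orlicz norms of the pieces $\tilde V_j$ on the local charts and the global averaged norm $\|V\|^{(av,\Psi,\m)}$ of $V$ on $\Si$: one has to verify that the bounded diffeomorphism $\Fs_j$ together with the bounded multiplier $\chi_j^2$ changes the averaged Orlicz norm only by a multiplicative constant depending on the atlas and the Ahlfors regularity constant of $\m_\Si$, but not on $V$. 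Once this equivalence is established (which follows from the equivalence of $\m_\Si$ and $\Hc^d$ on each chart and the definition \eqref{AvNorm}), absorbing the factor $J$ and the partition-of-unity constants into $C(\m)C(\AF)$ yields the stated estimate \eqref{est.Lip.}.
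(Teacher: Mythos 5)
Your proposal is correct and takes essentially the same route as the paper, which disposes of this corollary in one line by the localization scheme it spells out for Corollary \ref{Cor.Man.Est}: a finite atlas covering $\Si$, splitting of the measure into chart-supported pieces, reduction of each piece to the Euclidean case of Theorem \ref{Est.Lip.Th} via Proposition \ref{Prop.Local} and coordinate pullback, and recombination by Ky Fan's inequality. The extra care you devote to the pulled-back Lipschitz graphs and to the chart-wise equivalence of the averaged Orlicz norms is exactly the \emph{usual localization} the paper leaves implicit.
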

More about operators on manifolds can be found in Section \ref{Sect.surface}.
\subsection{Logarithmic potential} Here we demonstrate the relation of our construction with the logarithmic potential operator. A logarithmic potential of a measure $P$ in $\R^\Nb$ is usually defined as
\begin{equation*}%\label{LogPot}
    \Lb[P](X)=\int \log|X-Y| P(dY).
\end{equation*}
This  object is being extensively used in Potential Theory, Analysis, and Partial Differential Equations, as well as numerous applications. We take a somewhat different point of view on this potential. Let a compactly supported finite Borel measure $\m$ be $s$-Ahlfors regular, $s>0$,  and with $P=V\m$ and $V\in L^{\Psi,\m}$, $V\ge0,$ we associate the logarithmic potential as an operator in the space $L_{2,P}(\R^\Nb)$:
\begin{equation}\label{LogOperator}
    \LF_P: L_{2,P}\to L_{2,P};\, \LF_{P}: f(X)\mapsto  \int \log|X-Y|f(Y) P(dY), f\in L_{2,P}.
\end{equation}
\begin{thm}\label{LogTheorem} Operator $\LF_P$ is a bounded self-adjoint operator in $L_{2,P}$; it is compact and for the distribution function $n(\la,\LF_{P})$ of  its singular numbers $s_k(\LF_P) $ the estimate holds
\begin{equation}\label{EstLog}
    \limsup_{\la\to 0} \la n(\la,\LF_{P})\le C(\m)\|V\|^{(av,\Psi,\mu)}
\end{equation}
\end{thm}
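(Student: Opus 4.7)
The plan is to realize $\LF_P$, modulo a correction with faster singular-value decay, as spectrally equivalent to the operator $\Tb_{P,\AF_0}$ with $\AF_0=(1-\D)^{-\Nb/4}$, whose singular values are already controlled by Theorem~\ref{EstTheor}.

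Since $V\ge 0$, multiplication by $V^{1/2}$ is a partial isometry from $L_{2,P}$ onto the subspace $\{V>0\}$ of $L_2(d\m)$, under which $\LF_P$ becomes the operator $\widetilde\LF$ on $L_2(d\m)$ with symmetric kernel $V^{1/2}(X)\log|X-Y|V^{1/2}(Y)$; self-adjointness of $\LF_P$ is then immediate, and its singular values coincide with those of $\widetilde\LF$. The core input is the classical Bessel expansion: the kernel $G_\Nb$ of $(1-\D)^{-\Nb/2}=\AF_0^{*}\AF_0$ satisfies, uniformly on compact subsets of $\R^\Nb$,
\begin{equation*}
G_\Nb(X-Y)=-\ka_\Nb\log|X-Y|+\Rc(X-Y),
\end{equation*}
with $\ka_\Nb>0$; the remainder $\Rc$ behaves near $0$ like $c_0+c_1|X|^2\log|X|+\ldots$, i.e., up to an analytic perturbation, an anti-Laplacian of the logarithm. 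After multiplication by a smooth cut-off $\chi$ supported in a neighbourhood of $\{X-Y:X,Y\in\supp\m\}$, the function $\chi\Rc$ is the kernel of a pseudodifferential operator of order $-\Nb-2$, strictly below $-\Nb/2$. This produces the splitting $\widetilde\LF=-\ka_\Nb^{-1}\widetilde\Tb+\ka_\Nb^{-1}\widetilde\Rc$, where $\widetilde\Tb,\widetilde\Rc$ are the operators on $L_2(d\m)$ with kernels $V^{1/2}G_\Nb V^{1/2}$ and $V^{1/2}\Rc V^{1/2}$, respectively.

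For $\widetilde\Tb$, the standard $B^{*}B\leftrightarrow BB^{*}$ identity applied to the factorization $B=V^{1/2}\AF_0 E$, with $E\colon L_2(d\m)\to H^{-\Nb/2}(\R^\Nb)$ the embedding $g\mapsto g\,\m$, shows that $\widetilde\Tb=BB^{*}$ has the same non-zero singular values as $\Tb_{P,\AF_0}=B^{*}B$ on $L_2(\R^\Nb)$. Ahlfors $s$-regularity of $\m$ implies \eqref{MazMeasCond}, so Theorem~\ref{EstTheor} delivers $n(\la,\widetilde\Tb)\le C(\AF_0)C(\m)\la^{-1}\|V\|^{(av,\Psi,\m)}$ and, together with Theorem~\ref{Th.boundedness}, boundedness of $\widetilde\Tb$. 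For $\widetilde\Rc$, the order $-\Nb-2$ character of $\chi\Rc$ means its quadratic form factors through $H^{(\Nb+2)/2}(\R^\Nb)=H^{l+1}(\R^\Nb)$, so Theorem~3.1 of \cite{BS} --- the very tool that drives the proof of Proposition~\ref{Prop.Local} --- yields $s_k(\widetilde\Rc)=o(k^{-1})$, whence $n(\la,\widetilde\Rc)=o(\la^{-1})$ as $\la\to 0$; boundedness of $\widetilde\Rc$ is automatic since its kernel is bounded on the support. Assembling via Ky Fan, $n(2\la,\widetilde\LF)\le n(\la,\ka_\Nb^{-1}\widetilde\Tb)+n(\la,\ka_\Nb^{-1}\widetilde\Rc)$, and taking $\limsup_{\la\to 0}\la\,n(\la,\cdot)$ kills the remainder and produces \eqref{EstLog} with the constant $\ka_\Nb^{-1}C(\AF_0)C(\m)$ re-absorbed into $C(\m)$; compactness of $\LF_P$ follows from $s_k\to 0$.

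The only step requiring genuine work beyond direct citation is the identification of $\chi\Rc$ as the kernel of a pseudodifferential operator of order $-\Nb-2$, i.e.\ recognising that the subleading Bessel term is, modulo analytic contributions, an anti-Laplacian of $\log|X|$ with Fourier multiplier $\sim|\xi|^{-\Nb-2}$ at infinity. With this identification in place, the leading piece is handled by Theorem~\ref{EstTheor} and the remainder by the subprincipal Birman--Solomyak estimate from \cite{BS}, so the entire argument becomes a fairly short reduction.
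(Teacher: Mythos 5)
Your proof is correct and follows essentially the same route as the paper's: both identify the logarithmic potential, after conjugation by $V^{1/2}$, with $\KF\KF^{*}$ where $\KF^{*}\KF=\Tb_{P,\AF_0}$, exploit the logarithmic singularity of the kernel of $\AF_0\AF_0^{*}=(1-\D)^{-\Nb/2}$, and feed the leading term into Theorem \ref{EstTheor}. The only (harmless) differences are that you run the reduction in the opposite direction --- from $\LF_P$ down to $\Tb_{P,\AF_0}$ rather than from $\Tb_{P,\AF_0}$ up to $\LF_P$ --- and that you are more explicit than the paper about why the Bessel remainder, being of order $-\Nb-2$, contributes only $o(\la^{-1})$ via Theorem 3.1 of \cite{BS}, a point the paper dispatches with ``up to weaker terms.''
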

\begin{proof}We apply the transformation used already once in Sect. \ref{Bddness.Sect} (and to be used again in the study of eigenvalue asymptotics.) Consider  operator $\Tb_{V,\m,\AF}$ under the conditions of Theorem \ref{LogTheorem}, with a special choice of  $\AF:\,$ namely  $\AF=(1-\Delta)^{-\Nb/4}$.  Similar to Section \ref{Bddness.Sect},  operator $\Tb_{V, \m,\AF}$ admits representation
\begin{equation*}%\label{transToLog}
\Tb_{V,\m, \AF} = \KF^*\KF,
\end{equation*}
with  $\KF$  acting from $L^2(\R^\Nb)$ to $L^2(\Mb,\mu), \, \Mb=\supp\m, $ as
$ \KF= U\G_\Mb\AF,$ where $\G_\Mb$ is the restriction from $\R^\Nb$ to $\Mb$, a bounded operator from $H^{\Nb/2}(\R^\Nb)$ to $L_{2,\m}$,  $U=V^{\frac12}$ and the composition is bounded. Moreover, under our conditions, by Theorem \ref{EstTheor},
\begin{equation}\label{LogEstimatePrel}
n(\la,\KF^*\KF)\le \la^{-1} C(\m) \|V\|^{(av,\Psi,\m)}(\Mb).
\end{equation}

Operator $\KF^* =\AF^*\G_\Mb^* U: L^2(\Mb,\m)\to L^2(\R^\Nb)$ should be understood as composition of $\G_\Mb^* U$ acting, after the multiplication by $U$, as the extension by zero outside $\Mb$ to the space of distributions $H^{-l}(\R^\Nb)$, and  the pseudodifferential order $-l$ operator $\AF^*$ which maps $H^{-l}(\Om)$ to $L^2(\R^\Nb)$.

Now, recall that  nonzero eigenvalues of non-negative operators $\KF^*\KF$ in $L^2(\R^{\Nb})$ and $\KF\KF^*$ in $L^2(\Mb,\m)$ coincide. The operator $\KF\KF^*$ acts as
\begin{equation}\label{KKstar}
    \KF\KF^* =  U\G_\Mb\AF  \AF^*\G_\Mb^* U =U\G_\Mb(\AF  \AF^*)\G_\Mb^* U.
\end{equation}
Here operator $\AF  \AF^*$ is an order $-2l=-\Nb$ pseudodifferential operator which we consider as acting from $H^{-l}(\R^\Nb)$ to $H^l(\R^{\Nb})$. It has principal symbol $|\X|^{-\Nb}$, and therefore, it is the integral operator with logarithmic principal singularity of the kernel $R(X,Y,X-Y)$:
\begin{equation}\label{KKstarKernel}
 R(X,Y,X-Y) = \Cb_\Nb\log|X-Y|+ R'(X,Y)
\end{equation}
with $R'(X,Y)=o(1), X\to Y$. The coefficient $\Cb_\Nb$ equals $\frac{2\sqrt{\pi}}{\G(\Nb/2+1)}$ (see, e.g. \cite{Schwartz}, (VII.7.15)).
Therefore,  operator $\KF\KF^*$ acts, up to weaker terms, as
\begin{equation}\label{ActionKKstar}
    (\KF\KF^* v)(X) =\Cb_\Nb U(X)\int_{\Mb}\log|X-Y|U(Y) v(Y) d\m(Y).
\end{equation}
in $L^2(\Mb,\m)$. Finally, the eigenvalue problem $\KF\KF^* v=\la v$ in $L^2(\MF,\mu)$, by setting $v(X)=U(X)f(X)$, transforms to the eigenvalue problem    \eqref{LogOperator}
for operator of logarithmic potential. Eigenvalue estimate  \eqref{EstLog} follows therefore from  \eqref{LogEstimatePrel}.
\end{proof}
In the next section we benefit of the above way of reasoning  acting in the opposite direction.
%\subsection{Operators on manifolds.} The results above are carried over without complications to  Birman-Schwinger type operators   $\Tb$ on compact manifolds. This is achieved by means of a rather usual localization procedure. We give here an explanation, without going into details. Let $\XF$ be a smooth compact manifold of dimension $\Nb$; we can assume that a Riemannian structure $g$ is fixed. With this structure we associate the Laplace-Beltrami operator $\D_g$. We fix a finite cover of $\XF$ by local charts $\Uc_{j}$ with co-ordinate maps $\fb_j:\Uc_j\to \Vc_j\subset \R^\Nb.$ We fix also a decomposition of unity $\psi_j\in C_0^\infty(\Uc_j)$,
\section{Eigenvalue asymptotics and measurability. Lipschitz surfaces}\label{Sect.As}
The measurability of the Birman-Schwinger type operator $\Tb_{V,\m}$ can be derived, in particular, from the eigenvalue asymptotics for this operator. Note that results stating such  asymptotics are much stronger than just  measurability. Nevertheless, in all approaches to proving measurability of this type of operators, the eigenvalue  asymptotics itself, or at least some weaker version  of it, like the Wodzicki residue, serve as  the starting point. It seems that for a long time, specialists in Noncommutative Geometry, when dealing with Connes' measurability, were unwary of  publications by M.Sh. Birman and M.Z. Solomyak in late 70-s on the eigenvalue asymptotics for negative order pseudodifferential operators as well as of further extensions in this direction. It turns out that these  results and their consequences, in particular, for potential type integral operators, enable one to establish integrability in a considerably more general setting.

In this section and the next one, we will systematically use a perturbation technique based upon the fundamental asymptotic perturbation  lemma by M.Sh.Birman and M.Z.Solomyak, see, e.g., Lemma 1.5 in \cite{BS}. For Readers' convenience, we  reproduce the formulation (by far, not the most general one) we need further on.

\begin{lem}\label{BSLemma} Let $\Tb$ be a self-adjoint compact operator. Suppose that for $\ve$ small enough, $\Tb$ can be represented as a sum, $\Tb=\Tb_\ve+\Tb_\ve', $ so that for eigenvalues of $\Tb_\e$ the asymptotics is known, $n_{\pm}(\la,\Tb_{\ve})\sim \la^{-1}C^{\pm}_{\ve}, \la\to 0,$ while for the singular values of $\Tb_\ve',$ the asymptotic estimate is valid, $\limsup_{\la\to 0}\la n(\la,\Tb)\le \ve$. Then the limits $C^{\pm}=\lim_{\ve\to 0}C^{\pm}_{\ve}$ exist and for the eigenvalues of $\Tb$ the asymptotic formulas hold, $n_{\pm}(\la,\Tb)\sim \la^{-1}C^{\pm}, \,\la\to 0.$
\end{lem}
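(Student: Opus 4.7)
My plan is to combine the Ky Fan inequalities for eigenvalue and singular-number counting functions with a coordinated choice of two small parameters: the perturbation strength $\ve$ given in the hypotheses, and an auxiliary parameter $\delta\in(0,1)$ splitting the spectral level $\la$ between the two summands of $\Tb=\Tb_\ve+\Tb_\ve'$. The basic input is the trivial bound $n_\pm(\mu,B)\le n(\mu,B)$ valid for any compact self-adjoint $B$, which lets us control the $n_\pm$ contribution of the perturbation $\Tb_\ve'$ by its singular-number counting function.

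The first step is to apply Ky Fan to the decomposition $\Tb=\Tb_\ve+\Tb_\ve'$ at levels $(1-\delta)\la$ and $\delta\la$, obtaining
\[
n_\pm(\la,\Tb)\le n_\pm((1-\delta)\la,\Tb_\ve)+n(\delta\la,\Tb_\ve').
\]
Multiplying by $\la$ and using the two hypotheses $\la\,n_\pm(\la,\Tb_\ve)\to C^\pm_\ve$ and $\limsup_{\la\to 0}\la\,n(\la,\Tb_\ve')\le\ve$ (after rescaling $\la\mapsto\delta\la$), I get
\[
\limsup_{\la\to 0}\la\,n_\pm(\la,\Tb)\le (1-\delta)^{-1}C^\pm_\ve+\delta^{-1}\ve.
\]
The symmetric step, applied to $\Tb_\ve=\Tb+(-\Tb_\ve')$ at levels $\la$ and $\delta\la$ and using $n_\pm(t,-\Tb_\ve')=n_\mp(t,\Tb_\ve')\le n(t,\Tb_\ve')$, yields
\[
\liminf_{\la\to 0}\la\,n_\pm(\la,\Tb)\ge (1+\delta)^{-1}C^\pm_\ve-\delta^{-1}\ve.
\]

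The final step is to balance the two parameters. Setting $\delta=\sqrt{\ve}$ and denoting $L^\pm:=\limsup_{\la\to 0}\la\,n_\pm(\la,\Tb)$, $\ell^\pm:=\liminf_{\la\to 0}\la\,n_\pm(\la,\Tb)$, the two inequalities rearrange as
\[
(1-\sqrt{\ve})(L^\pm-\sqrt{\ve})\le C^\pm_\ve\le (1+\sqrt{\ve})(\ell^\pm+\sqrt{\ve}).
\]
Sending $\ve\to 0$ forces $L^\pm\le\liminf_{\ve\to 0}C^\pm_\ve\le\limsup_{\ve\to 0}C^\pm_\ve\le\ell^\pm$; combined with $\ell^\pm\le L^\pm$, all five quantities coincide. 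This simultaneously establishes existence of $C^\pm:=\lim_{\ve\to 0}C^\pm_\ve$ and the asserted asymptotic $\la\,n_\pm(\la,\Tb)\to C^\pm$.

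The main obstacle is not analytic but strategic: one must couple the two small parameters so that the errors $(1-\delta)^{-1}-1\sim\delta$ (from the scale mismatch) and $\delta^{-1}\ve$ (from the perturbation) vanish together. The balanced choice $\delta=\sqrt{\ve}$ does precisely this. Beyond this bookkeeping, no deeper input is needed than the Ky Fan inequalities and the comparison $n_\pm\le n$ on $\Tb_\ve'$.
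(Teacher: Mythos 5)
Your proof is correct, and it is essentially the classical Birman--Solomyak argument: the paper itself gives no proof of this lemma (it is quoted from Lemma 1.5 of \cite{BS}), and your route via the Weyl/Ky Fan inequality $n_{\pm}(\la_1+\la_2,A+B)\le n_{\pm}(\la_1,A)+n_{\pm}(\la_2,B)$, the comparison $n_{\pm}\le n$, and the balanced choice $\delta=\sqrt{\ve}$ is exactly the standard one. The only cosmetic point worth noting is that the hypothesis as printed contains a typo ($\limsup\la n(\la,\Tb)\le\ve$ should read $\limsup\la n(\la,\Tb_\ve')\le\ve$), which you correctly interpreted.
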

\subsection{Measures on Lipschitz surfaces}\label{Subsect.Lip}
Formulas for the eigenvalue asymptotics for a measure on a Lipschitz surface were obtained in \cite{RSh2}. We discuss them here briefly and then present certain generalizations.

Let $\Si\subset\R^\Nb$ be a compact Lipschitz surface of dimension $d: 0<d<\Nb, \dF=\Nb-d,$ defined locally, in proper co-ordinates $X=(\xb,\yb), \xb\in \R^d, \yb\in \R^{\dF}$  by the equation $\yb=\pmb{\f}(\xb)$, with a Lipschitz vector-function $\pmb{\f}$. Measure $\m$ generated by the embedding of $\Si$ into $\R^{\Nb}$ coincides with the $d$-dimensional Hausdorff measure $\Hc^d$.  By the Rademacher theorem, for $\m$-almost every  $X\in\Si$, there exists a tangent space $T_X\Si$ to $\Si$ at the point $X$ and, correspondingly, the normal space $N_X\Si$ which are identified naturally with the cotangent and the conormal spaces. By $S_X\Si$ we denote the sphere $|\x|=1$ in $T_X\Si.$
 \begin{thm}\label{Th.As.RSh} Let the real  function  $V$ on $\Si$ belong to $L^{\Psi,\m}(\Si)$. Let $\AF$ be a compactly supported in $\Om\subset\R^\Nb$ order $-l=-\Nb/2$ pseudodifferential operator with principal symbol $a_{-l}(X,\X)$.
At the points $\X\in \Si$  where the tangent plane exists we define the auxiliary symbol $r_{-d}(X,\x)$, $\x\in T_X\Si$,
\begin{equation*}%\label{r_{-d}}
    r_{-d}(X,\x)=(2\pi)^{-\dF}\int_{N_X\Si}|a_{-l}(X,\x,\y)|^2 d\y.
\end{equation*}
and the density
\begin{equation}\label{ro} \ro_{\AF}(X)=\int_{S_X\Si} r_{-d}(X,\x)d\x
\end{equation}
Then for the eigenvalues of  operator $\Tb_{V,\m,\AF}=\AF^{*}P\AF,$ $P= V\m$, the asymptotic formulas are valid
\begin{equation}\label{AsLambda}
     n_{\pm}(\la,\Tb_{V,\m,\AF})\sim \la^{-1} A_{\pm}(V,\m,\AF), \, \la\to 0,
    \end{equation}
where
  \begin{gather*}%\label{coeff}
   A_{\pm}(V,\m,\AF)= \frac{1}{d (2\pi)^{d-1}}\int_\Si\int_{S_X\Si} V_{\pm}(X)r_{-d}(X,\x)d\x d\m= \\\nonumber
   \frac{1}{d (2\pi)^{d-1}} \int_\Si V_{\pm}(X)\ro_{\AF}(X) d\m(X), \end{gather*}
with  density $\ro_{\AF}(X)$ defined in \eqref{ro}.
\end{thm}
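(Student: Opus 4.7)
The strategy is to reduce the eigenvalue problem on the Lipschitz surface $\Si$ to Weyl's law for a negative-order pseudodifferential operator on the tangent plane, combining the factorization already used in the proof of Theorem~\ref{LogTheorem} with the asymptotic perturbation Lemma~\ref{BSLemma}.

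I would first assume $V\ge 0$; the signed case is then recovered by applying the argument to $V_+$ and $V_-$ separately, using the disjointness of their supports together with the Ky~Fan inequalities and Lemma~\ref{BSLemma}. Factor $\Tb_{V,\m,\AF}=\KF^{*}\KF$ with $\KF=V^{1/2}\G_\Si\AF\colon L_2(\Om)\to L^2(\Si,\m)$; then the nonzero spectrum of $\Tb_{V,\m,\AF}$ coincides with that of
\begin{equation*}
  \Lc:=\KF\KF^{*}=V^{1/2}\G_\Si(\AF\AF^{*})\G_\Si^{*}V^{1/2}
\end{equation*}
on $L^2(\Si,\m)$. The payoff is that the action has moved onto the $d$-dimensional surface, and $\AF\AF^{*}$ is now an order $-2l=-\Nb$ pseudodifferential operator with principal symbol $|a_{-l}(X,\X)|^2$.

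Next I localize. By Rademacher's theorem, at $\m$-a.e.\ $X_{0}\in\Si$ the tangent plane exists; by Egoroff the gradient of a local graph parametrization $\yb=\pmb{\f}(\xb)$ is uniformly close to a constant on a patch of $\m$-mass arbitrarily close to full. Using Proposition~\ref{Prop.Local} to trim $\AF$ to a neighborhood of each patch, the estimate of Theorem~\ref{EstTheor} to discard a patch of small measure, and a smooth partition of unity adapted to the cover, Lemma~\ref{BSLemma} reduces the problem to a single patch in which, after straightening $\Si$ by the map $\xb\mapsto(\xb,\pmb{\f}(\xb))$ and freezing $a_{-l}(X,\X)$ at $X_{0}$, the Schwartz kernel of $\G_\Si(\AF\AF^{*})\G_\Si^{*}$ equals the restriction of the kernel of $\AF\AF^{*}$ to the tangent $d$-plane. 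Performing the inverse Fourier transform in the $\dF$ normal variables identifies this with a pseudodifferential operator of order $-d$ on $\R^d$ whose principal symbol is exactly $r_{-d}(X,\x)$; the factor $(2\pi)^{-\dF}$ in the definition of $r_{-d}$ is the partial Fourier-inversion constant. The errors from the Lipschitz tilt, from the nontrivial Jacobian $\s(\xb)$ of $d\m$, and from the $X$-dependence of the symbol give commutator-type remainders whose singular values decay faster than $k^{-1}$, by Theorem~3.1 of \cite{BS}, analogously to the argument in Proposition~\ref{Prop.Local}.

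What remains is classical: apply Weyl's law for self-adjoint nonnegative pseudodifferential operators of negative order on a $d$-dimensional domain to $\Lc$, with leading symbol $V(X)r_{-d}(X,\x)$; computing the volume of $\{V r_{-d}>\la\}$ in polar coordinates and tracking the $(2\pi)$-normalizations produces the stated constant $\frac{1}{d(2\pi)^{d-1}}$ and integrand. The boundedness assumption on $V$ is removed by approximation: choose bounded $V_{\ve}$ with $\|V-V_{\ve}\|^{(av,\Psi,\m)}<\ve$, apply the asymptotics just proved to $V_{\ve}$, and invoke Theorem~\ref{EstTheor} together with Lemma~\ref{BSLemma} to pass to the limit. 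The main obstacle is the symbol-identification step on a merely Lipschitz surface, where the pseudodifferential calculus is not literally available: the resolution is to treat the Lipschitz tilt and the $X$-dependence of $a_{-l}$ as perturbations of the affine, frozen-coefficient model on arbitrarily small patches where, by Egoroff, they are uniformly small, absorbing them into the $o(k^{-1})$ remainder slot of Lemma~\ref{BSLemma}.
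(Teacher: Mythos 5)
Your overall architecture coincides with the paper's: reduce to nice sign-definite densities via Theorem \ref{EstTheor} and Lemma \ref{BSLemma}, factorize $\Tb_{V,\m,\AF}=\KF^*\KF$, pass to $\KF\KF^*=U\G_{\Si}(\AF\AF^*)\G_{\Si}^*U$ viewed as an integral operator on $\Si$ with a weak (order-zero homogeneous and/or logarithmic) diagonal singularity, obtain the asymptotics for the smooth model from the Birman--Solomyak results on negative order pseudodifferential operators, and treat the Lipschitz irregularity as a perturbation handled by the asymptotic perturbation lemma. The paper delegates that last step to the integral-operator results of \cite{RT1}, \cite{RT2}.

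The genuine gap is in your treatment of the error terms. You assert that the errors coming from the Lipschitz tilt, the Jacobian $\s(\xb)$, and the $X$-dependence of the symbol are ``commutator-type remainders whose singular values decay faster than $k^{-1}$, by Theorem 3.1 of \cite{BS}, analogously to Proposition \ref{Prop.Local}.'' This is not correct: unlike the commutator $[\AF,\chi]$ in Proposition \ref{Prop.Local}, which is genuinely of lower order $-l-1$, the operator measuring the difference between the kernel restricted to the tilted graph and to the flattened, frozen-coefficient model is of the \emph{same} order; its singular values decay like $k^{-1}$ with a coefficient controlled by the oscillation of $\nabla\pmb{\f}$, not faster. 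This is exactly why Lemma \ref{BSLemma} (which only requires $\limsup_{\la\to0}\la\, n(\la,\Tb_\ve')\le\ve$) must be invoked rather than Ky Fan plus an $o(k^{-1})$ remainder; your closing sentence gestures at this but contradicts the earlier claim. Moreover, proving that this same-order error really has a small coefficient is the hard technical content of the argument: in \cite{RT1}, \cite{RT2} it is done by approximating $\pmb{\f}$ by smooth $\pmb{\f}_\e$ with $\nabla\pmb{\f}_\e\to\nabla\pmb{\f}$ in every $L^p$, $p<\infty$ (uniform convergence of the gradients being unavailable), transferring both operators to a domain in $\R^d$ and estimating their difference there. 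Your Egoroff patching instead leaves you with a merely measurable good set, on which the restricted operator is no longer a pseudodifferential operator on a reasonable domain, so the frozen-coefficient Weyl law cannot be applied to it directly and the cross-interaction with the bad set is unaccounted for. As written, the key estimate is asserted rather than proved.
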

The expression
 \begin{equation}\label{Resid}
 A(V,\m,\AF)= A_{+}(V,\m,\AF)-A_{-}(V,\m,\AF)=\frac{1}{d (2\pi)^{d-1}}\int_{\Si}V(X)\ro_{\AF}(X)d\m(X)
 \end{equation}
can be formally understood as an analogy of the  Wodzicki residue of the symbol $V(X)r_{-d}(X,\x)$ on $\Si$, of course, without any smoothness conditions inherent to Wodzicki theory (the latter 'symbol' is even not expected to be a symbol of any pseudodifferential operator).  We call it $\Si$-\emph{Wodzicki residue} of $(V,\AF,\m).$

In the particular case of  $\AF=\AF_0=(1-\D)^{-\Nb/4},$ in a neighborhood of $\Si$ in $\R^\Nb$, we have  $a_{-l}(X,\X)=|\X|^{-\Nb/2}$ and

\begin{gather*}%\label{Symbol r}
r_{-d}(X,\x)= (2\pi)^{-\dF}\int_{\R^{\dF}}(|\x|^2+|\y|^2)^{-\Nb/2}d\y =\\ \nonumber |\x|^{-d}(2\pi)^{-\dF}\pmb{\om}_{\dF-1}\int_{0}^{\infty}\z^{\dF-1}(1+\z^2)^{{-\Nb/2}} =
\\ \nonumber \pmb{\om}_{\dF-1}\frac{1}{2(2\pi)^\dF}\Bb\left( \frac{\dF}{2}, \frac{d}{2}\right) |\x|^{-d},
\end{gather*}
where $\pmb{\om}_{\dF-1}$ is the volume of the unit sphere in $\R^{\dF}$, $\Bb$ is the Euler  Beta-function.
So, here we have
\begin{equation}\label{integral}
   n_{\pm}(\la, \Tb_{V,\AF_0})\sim \la^{-1}\Zb(d,\dF)\int_{\Si} V_{\pm}(X) d\m(X),
\end{equation}
\begin{equation}\label{IntCoeff}
\Zb(d,\dF)=\frac{\pmb{\om}_{\dF-1}\pmb{\om}_{d-1}}{2d(2\pi)^{\dF}}\Bb\left( \frac{d}{2}, \frac{\dF}{2}\right)
\end{equation}

We explain briefly the way how Theorem \ref{Th.As.RSh} is being proved, directing  interested Readers to \cite{RSh2} for details.

First, we can  replace $V$ by a  weight $V_{\ve}$, defined and smooth in a neighborhood of $V,$ such that the eigenvalue distribution functions for operators $\Tb_{V,\m,\AF}$ and $\Tb_{V_{\ve},\m,\AF}$ differ asymptotically  by less than $\ve\la^{-1}$. Here estimates in Section \ref{Sect.Est} are used. By the basic asymptotic perturbation lemma by M.Sh. Birman and M.Z. Solomyak, see Lemma \ref{BSLemma}, such approximation enables one to prove asymptotic formulas for nice densities $V_{\ve}$ only, by passing then to limit as $V_{\ve}$ approaches $V$ in the averaged Orlicz norm. On the next step, we separate the positive and negative eigenvalues of our operator. Namely, by some more approximation and localization, we find that, in the leading term, the asymptotics of   positive eigenvalues of the operator is determined only by the positive part of the density $V_{\ve}$, while the asymptotics of the negative eigenvalues is determined only  by the negative part of $V_\ve$. Thus, the problem is reduced to
the case of a sign-definite  $V_{\ve}$, which we may suppose being the restriction to $\Si$ of a smooth sign-definite function.

  Next, the problem is reduced to the study of eigenvalues of an integral operator on $\Si$ with kernel having an order zero and/or logarithmic singularity at the diagonal. This is done in the following way. Similarly to \eqref{factorization}, operator $\Tb_{V_{\ve},\m,\AF}$ factorizes as
\begin{equation}\label{factorizationSi}
 \Tb_{V_{\ve},\m,\AF}=(\G_{\Si}U\AF)^*(\G_{\Si}U\AF)
 \end{equation}
where $U=V_{\ve}^{\frac12}$, $\G_{\Si}$ is the operator of restriction from $\Om$ to $\Si$, so the operator $\KF=\G_{\Si}U\AF$ is bounded as acting from $L^2(\Om)$ to $L^2(\Si,\m)$ and the product $\KF^*\KF=(\G_{\Si}U\AF)^*(\G_{\Si}U\AF)$ acts in $L^2(\Om)$. We know, however, that the nonzero eigenvalues of the operator $\KF^*\KF$ coincide with nonzero eigenvalues of $\KF \KF^*,$ counting multiplicities. Operator $\KF \KF^*$ acts in $L^2(\Si,\m)$ as

\begin{equation}\label{transposed}
\KF \KF^*=   \G_{\Si}U\AF\AF^*U\G_{\Si}^*.
\end{equation}
Operator $U\AF\AF^*U$ is an order $-\Nb$ pseudodifferential operator in $\Om$ with principal symbol $\Rc_{-\Nb}(X,\X)=V_{\ve}(X)|a_{-l}(X,\X)|^2$, or, equivalently, as a self-adjoint integral operator with kernel $R(X,Y,X-Y)$, smooth for $X\ne Y$. This kernel, being the Fourier transform of the symbol of $U\AF\AF^*U$ in $\X$ variable, has the leading singularity in $X-Y$ containing possible terms of two types, namely, $R_0(X,Y,X-Y),$ order zero homogeneous in $X-Y$, and $R_{\log}(X,Y)\log|X-Y|$ with smooth function $R_{\log}$ - see, e.g., \cite{Taylor 2}, Ch. 2, especially, Proposition 2.6. Note that one of these terms may be absent. In particular, if the principal symbol of $\AF$ equals $|\X|^{-\Nb/2},$ this means that  $\AF$ is $(1-\Delta)^{-\Nb/4},$ framed, possibly, by cut-off functions -- and this is the most interesting case--, only the logarithmic term is present.
After framing by $\G_{\Si}U$ and $U\G_{\Si}^*$, as in \eqref{transposed}, we arrive at the representation of $\KF\KF^*$ as the integral operator  $\RF$ in $ L^2(\Si,\m)$ with kernel $R(X,Y,X-Y)=R_0(X,Y,X-Y)+R_{\log}(X,Y)\log|X-Y|.$ Exactly this kind of operators on Lipschitz surfaces was considered in the papers \cite{RT1}, for  surfaces of codimension 1, and \cite{RT2}, for an arbitrary codimension. The result on eigenvalue asymptotics, obtained for such integral operators in these papers, corresponds exactly the formulas in Theorem \ref{Th.As.RSh} above.

For an interested Reader we explain now, not going into technical  details (which are presented in \cite{RT1}, \cite{RT2}), how the formulas for eigenvalue asymptotics for integral operators on Lipschitz surfaces are  being proved. The starting point is establishing these formulas for a smooth surface. This is achieved by an adaptation of  the results by M.Sh.Birman and M.Z.Solomyak in \cite{BirSolPDO1} on the eigenvalue asymptotics for negative order pseudodifferential operators. Next, the given Lipschitz surface  $\Si:\yb=\pmb{\f}(\xb)$ is approximated, locally,  by smooth ones, $\Si_\e,$ so that in their local representation $\yb=\pmb{\f}_\e(\xb)$, functions $\pmb{\f}_\e$ converge to $\pmb{\f}$ in $L^{\infty}$ and their gradients $\nabla\pmb{\f}_\e$ converge to $\nabla\pmb{\f}$ in all $L^p, p<\infty$ (one should not expect convergence of gradients in $L^\infty,$ of course). The changes of variables $\xb\mapsto (\xb,\pmb{\f}(\xb))$, resp., $\xb\mapsto (\xb,\pmb{\f}_{\e}(\xb))$, transform operators with kernel $R(X,Y,X-Y)$ on the surfaces $\Si$ and $ \Si_\e$ to operators $\RF,$ resp., $\RF_{\e},$ on some domain in $\R^{d}$, while the eigenvalue asymptotics for  ${\RF}_{\e},$ is known. Now it is possible to consider the difference of these operators. After estimating the eigenvalues of ${\RF}-{\RF}_{\e}$ using the closeness or $\pmb{\f}$ and $\pmb{\f}\e$ (and this is a fairly technical part of the reasoning), we obtain that the eigenvalue asymptotic coefficients of $ {\RF} -{\RF}_{\e}$ converge to zero. This property enables one to use again the asymptotic perturbation lemma, Lemma \ref{BSLemma} to justify the eigenvalue asymptotics formula for ${\RF}.$

\subsection{Connes' integral over a Lipschitz surface}\label{Sub.Connes} As soon as Theorem \ref{Th.As.RSh} is proved, it follows immediately that the operator $\Tb_{V,\m,\AF}$ is Connes' measurable.

 \begin{thm}\label{measureoneLip} Let $\Si$ be a compact $d$--dimensional Lipschitz surface in $\R^{\Nb},$ with the induced  measure $\m=\Hc^d,$ and $\AF$ be a compactly supported  order $-\Nb/4$ pseudodifferential operator in $\R^\Nb$ or $\AF=(1-\Delta)^{-\Nb/4}$. Then for any $V\in L^{\Psi,\m}$, operator $\Tb_{V, \m,\AF}$ is Connes' measurable and $\ta(\Tb_{V, \m,\AF})$ equals the $\Si$-Wodzicki residue of $(V,\Si,\m,\AF)$, for any normalized positive singular trace $\ta$ on $\MF_{1,\infty},$
 \begin{equation*}%\label{measurability1}
   \ta(\Tb_{V,\m,\AF})=d^{-1}(2\pi)^{-d}\int_{\Si}V(X)\ro_{\AF}(X)\mu(dX).
 \end{equation*}

 \end{thm}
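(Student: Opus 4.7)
The plan is to deduce the Connes measurability and the value of the singular trace directly from the Weyl-type eigenvalue asymptotics already established in Theorem \ref{Th.As.RSh}. The general principle at work is the one recalled in the introduction: if a positive operator $T\in\MF_{1,\infty}$ has eigenvalues satisfying $\la_k(T)\sim C/k$, then $\sum_{k\le n} s_k(T)\sim C\log n$, so the Dixmier limit \eqref{weaklim} exists and equals $C$; consequently every normalized positive singular trace $\ta$ on $\MF_{1,\infty}$ takes the same value $C$ on $T$. The task is therefore to set things up so that Theorem \ref{Th.As.RSh} applies to a \emph{positive} operator rather than to our sign-indefinite $\Tb_{V,\m,\AF}$.

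To do this I would split $V=V_+-V_-$ with $V_\pm=\max(\pm V,0)\geq 0$. The quadratic form \eqref{quadrFormSing} decomposes additively as $\tb_{V,\m,\AF}[u]=\tb_{V_+,\m,\AF}[u]-\tb_{V_-,\m,\AF}[u]$, so (using Proposition \ref{Th.boundedness} for the boundedness of each summand under the hypothesis $V\in L^{\Psi,\m}$) we have the operator identity
\begin{equation*}
\Tb_{V,\m,\AF}=\Tb_{V_+,\m,\AF}-\Tb_{V_-,\m,\AF},
\end{equation*}
a difference of two positive bounded operators on $L_2(\Om)$. (These summands need not coincide with the spectral positive/negative parts of $\Tb_{V,\m,\AF}$, but that is irrelevant.) Now apply Theorem \ref{Th.As.RSh} with weight $V_+$: the negative counting function vanishes and the positive one gives $n(\la,\Tb_{V_+,\m,\AF})\sim \la^{-1}A_+(V,\m,\AF)$ as $\la\to 0$. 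Equivalently, the decreasingly ordered eigenvalues satisfy $s_k(\Tb_{V_+,\m,\AF})\sim A_+(V,\m,\AF)/k$, which by summation yields
\begin{equation*}
(\log(n+2))^{-1}\sum_{k\le n} s_k(\Tb_{V_+,\m,\AF})\longrightarrow A_+(V,\m,\AF).
\end{equation*}
The identical reasoning with $V_-$ gives the limit $A_-(V,\m,\AF)$.

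Since the limit in \eqref{weaklim} exists for each of the positive operators $\Tb_{V_\pm,\m,\AF}\in\MF_{1,\infty}$, the Dixmier--Connes measurability principle forces $\ta(\Tb_{V_\pm,\m,\AF})=A_\pm(V,\m,\AF)$ for every normalized positive singular trace $\ta$. By linearity,
\begin{equation*}
\ta(\Tb_{V,\m,\AF})=A_+(V,\m,\AF)-A_-(V,\m,\AF)=\frac{1}{d(2\pi)^{d-1}}\int_{\Si} V(X)\ro_{\AF}(X)\,d\m(X),
\end{equation*}
recovering the $\Si$-Wodzicki residue formula \eqref{Resid}. There is no real obstacle beyond invoking Theorem \ref{Th.As.RSh}: all spectral-analytic difficulty was absorbed there (perturbation by smooth weights, localization, reduction to integral operators on Lipschitz surfaces, passage from smooth to Lipschitz via $L^p$-convergence of gradients). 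The only bookkeeping point is verifying that the decomposition into $V_\pm$ is legitimate at the operator level rather than merely on quadratic forms, which is immediate from the boundedness estimate \eqref{bddness}.
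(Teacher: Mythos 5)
Your argument is correct and follows essentially the same route as the paper: decompose the density as $V=V_+-V_-$, apply the Weyl asymptotics of Theorem \ref{Th.As.RSh} to each of the nonnegative operators $\Tb_{V_\pm,\m,\AF}$ to obtain the existence of the Dixmier limit \eqref{weaklim} with value $A_\pm(V,\m,\AF)$, and then conclude by linearity of the singular trace. The paper's proof is the same computation, merely phrased by writing $\ta(\Tb_{V,\m,\AF})=\ta(\Tb_{V_+,\m,\AF})-\ta(\Tb_{V_-,\m,\AF})$ first and evaluating each term afterwards.
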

 \begin{proof}
 In fact, since the weak Schatten ideal $\SF_{1,\infty}$ is embedded in $\MF_{1,\infty}$ then,  for a sign-definite parts of the density,  $V_+$ or $V_-$, the asymptotic relations
\begin{equation*}%\label{Trace.posit.asymp}
   (\log(2+n))^{-1}\sum_{k\le n}\la_{k}(\Tb_{V_{\pm},\m,\AF})\to A_{\pm}(V_{\pm},\m,\AF),\, n\to\infty
\end{equation*}
are valid, being a direct consequence of  \eqref{AsLambda}. Therefore
\begin{equation*}%\label{trace posit}
\ta(\Tb_{V_{\pm},\m,\AF})=A_{\pm}(V_{\pm},\m,\AF)
\end{equation*}
for any normalized positive singular trace $\ta$ on $\MF_{1,\infty}$ for any $V_+$, resp., $V_-$ in the Orlicz  space $L^{\Psi,\m}$.  For $V$ with variable sign, we can use our Theorem \ref{Th.As.RSh} in its whole strength. Having the asymptotics \eqref{AsLambda}, both for positive and negative eigenvalues of $\Tb_{V,\m,\AF}$, we find that
\begin{gather}\label{Trace.pm}
   \ta(\Tb_{V,\m,\AF})=\ta(\Tb_{V_+,\m,\AF})-\ta(\Tb_{V_-,\m,\AF})= \lim(\log(2+n))^{-1}\sum_{|\la_k^{\pm}|<n}\la_k^{\pm} = A(V,\m,\AF)\\\nonumber=A^+(V,\m,\AF)-A^-(V,\m,\AF)=\frac{1}{d (2\pi)^{d-1}}\int_\Si\int_{S_X\Si} V(X)r_{-d}(X,\x)d\x d\m ,
\end{gather}
for any normalized singular trace on $\MF_{1,\infty}.$
This, according to definition, means that the operator $\Tb_{V,\m,\AF}$ is Dixmier-Connes' measurable. In particular, if we select $\AF=(1-\D)^{-\Nb/4},$ Connes' integral of the operator  $\Tb_{V,\m,\AF}$ coincides, up to a constant factor  in \eqref{IntCoeff} depending on the dimensions $\dF$ and $d$ only, with the surface integral of $V$ against the measure $\m$ on the Lipschitz manifold $\Si,$ see Section \ref{Subsect.Lip}.
\end{proof}

\subsection{Finite unions of Lipschitz surfaces in $\R^{\Nb}$. Measurability}\label{subs.samedim.trace}
Now we discuss Connes integrals over sets of more complicated structure. Here, the general result as in Theorem \ref{measureoneLip}, although possible,  is not that visual due to the dependence of the local formula in the integrand on a particular  representation of  Lipschitz surfaces involved. Therefore, from now on, we restrict ourselves to the analysis of the special case of operator $\AF=(1-\Delta)^{-\Nb/4}$ in $\R^\Nb$ having principal symbol $a_{-l}(X,\X)=|\X|^{-l}.$ We will omit $\AF$ in notation further on.

Our aim is to arrive  at the formula $\ta(\Tb_{P})=C\int P$ for widest  possible set of measures.

Let $\XF\subset \R^{\Nb}$ be a compact set, $\XF=\cup_{j\le \nb}\Si_j$ where each $\Si_j$ is a compact Lipschitz surface of dimension $d,\, 0<d<\Nb$. With $\Si_j$ we associate the measure $\m_j$ supported on $\Si_j$  generated by the embedding $\Si_j\subset \R^{\Nb}.$   We normalize these measures, setting $\tilde{\m}_j=\Zb(d,\dF)\m_j$, the coefficient $\Zb(d,\dF)$ given in \eqref{IntCoeff}. Let further $V_j$ be  real-valued functions on $\Si_j$, belonging to the corresponding Orlicz spaces, $V_j\in L^{\Psi, \m_j},\, P_j=V_j\m_j$. In our normalization, we associate measure $\tilde{P}=\sum_j V_j\tilde{\m}_j$ with the given measure $P=\sum_j V_j\m_j$.  This relation will be denoted by $\NF: P\mapsto \tilde{P}.$ This operator $\NF$ is extended by linearity to sums of measures supported on surfaces of different dimension.

With each of surfaces we associate operator $\Tb_{P_j}.$ In accordance with \eqref{Trace.pm},
\begin{equation*}%\label{NormLip}
    \ta(\Tb_{P_j})= \int_{\Si_j}V_j(X)\tilde{\m}_j(dX)=\int \tilde{P}_j(dX)=\int \NF(P_j)(dX).
\end{equation*}
Thus, with our normalization,  we have a convenient expression for the Connes integral over the union of surfaces.

\begin{thm}\label{FinitProp} Let measure $P$ be defined as $P=\sum V_j\m_j$ with $\m_j$ being the $d$-dimensional Hausdorff measure on a compact Lipschitz surface $\Si_j,$ $V_j\in L^{\Psi,\m}.$  Under the above conditions, operator $\Tb_{P}$ satisfies
\begin{equation}\label{Finite sum}
    \Tb_{P} =\sum \Tb_{P_j},
\end{equation}
it is Dixmier-Connes measurable, and for any normalized singular trace $\ta$,
  \begin{equation}\label{FiniteLinear}
    \ta(\Tb_{P})=\sum_j A(P_j)=\int \NF(P)(dX).
  \end{equation}
\end{thm}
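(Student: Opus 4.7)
The plan is to reduce everything to the single-surface case treated in Theorem \ref{measureoneLip} via the additivity of quadratic forms and the linearity of singular traces on $\MF_{1,\infty}$.

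First, I would establish the operator identity \eqref{Finite sum}. Since each $\m_j$ is the $d$-dimensional Hausdorff measure on a compact Lipschitz surface, it is Ahlfors $d$-regular, so by Theorem \ref{Est.Lip.Th} (or Proposition \ref{Th.boundedness}) the quadratic form $\tb_{P_j,\AF}[u]=\int|\AF u|^2\, P_j(dX)$ is bounded on $L_2(\R^\Nb)$ for each $j$. The measure $P=\sum_j P_j$ is a finite signed Borel measure, and by countable additivity (here just finite additivity),
\begin{equation*}
\tb_{P,\AF}[u] = \int_{\R^\Nb}|\AF u(X)|^2\, P(dX) = \sum_{j\le \nb}\int_{\R^\Nb}|\AF u(X)|^2\, P_j(dX)=\sum_{j\le\nb}\tb_{P_j,\AF}[u],
\end{equation*}
initially on $C_0^\infty$ and then, by boundedness of each summand, on all of $L_2$. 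Taking the self-adjoint operators representing these forms yields \eqref{Finite sum}.

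Second, I would invoke Theorem \ref{measureoneLip} for each $j$ separately: each $\Tb_{P_j}$ belongs to $\SF_{1,\infty}\subset\MF_{1,\infty}$ and is Dixmier--Connes measurable with $\ta(\Tb_{P_j})=A(P_j)=\int\NF(P_j)(dX)$ for every normalized positive singular trace $\ta$. By the linearity of $\ta$ on $\MF_{1,\infty}$ (which is the defining property of a trace on this ideal),
\begin{equation*}
\ta(\Tb_P)=\ta\Bigl(\sum_j\Tb_{P_j}\Bigr)=\sum_j \ta(\Tb_{P_j})=\sum_j A(P_j)=\int\NF(P)(dX).
\end{equation*}
Since the right-hand side does not depend on the choice of $\ta$, this common value equals $\ta(\Tb_P)$ for every normalized singular trace, which is precisely the definition of Dixmier--Connes measurability of $\Tb_P$.

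There is really no serious obstacle here: the heavy lifting has already been done in Theorem \ref{Th.As.RSh} and Theorem \ref{measureoneLip}, and the passage from one surface to a finite union is pure linearity. The only point that requires a brief remark is that one cannot, in general, derive \eqref{FiniteLinear} from the Weyl asymptotics \eqref{AsLambda} applied to $\Tb_P$ directly (since the positive and negative cones in $\MF_{1,\infty}$ do not interact well under summation if the surfaces $\Si_j$ intersect or if the signs of $V_j$ vary across them). The elegant way, used here, is to bypass any asymptotic statement for the sum and rely solely on the trace property: one does not need to prove a Weyl formula for $\Tb_P$ to conclude measurability, since linearity of $\ta$ suffices once measurability of each individual $\Tb_{P_j}$ is in hand. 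This is the shortcut promised in the introductory discussion, where it was pointed out that the trace formula for measures of complicated structure is considerably more elementary than the corresponding eigenvalue asymptotics.
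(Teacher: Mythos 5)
Your proposal is correct and follows essentially the same route as the paper: the identity $\Tb_P=\sum\Tb_{P_j}$ comes from additivity of the defining quadratic forms, and \eqref{FiniteLinear} then follows from the measurability of each single-surface operator (Theorem \ref{measureoneLip}) together with the linearity of the singular trace on $\MF_{1,\infty}$, with measurability of the sum read off from the $\ta$-independence of the right-hand side. Your closing observation — that this bypasses any Weyl asymptotics for the sum, which is genuinely harder and is treated separately — matches the paper's own framing of why this proof is so short.
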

\begin{proof} Relation \eqref{Finite sum} follows from the corresponding formula for the quadratic forms of the operators involved. The linearity property of singular traces implies \eqref{FiniteLinear}, and, since the expression on the right does not depend on $\ta$, measurability of the operator follows.
\end{proof}

\subsection{Finite unions of Lipschitz surfaces in $\R^{\Nb}$ of the same dimension. Eigenvalue asymptotics}\label{Subs.samedim.as}
The statement in Theorem \ref{FinitProp} is considerably weaker than  the one concerning the eigenvalue asymptotics,    namely, that
 \begin{equation}\label{addit.asymp}
    n_{\pm}(\la, \Tb)\sim \sum n_{\pm}(\la, \Tb_{P_j})\sim \la^{-1}\Zb(d,\dF)\sum \int V_{j,\pm}(X)  dm_j(X)
 \end{equation}
 holds. This is understandable, since, unlike the singular trace, the coefficients in the eigenvalue asymptotics do not, generally, depend linearly on the operators. Moreover, simple examples show that \eqref{addit.asymp} may be wrong, unless we impose some additional conditions. In particulaar, it was established in \cite{RSh2}, Theorem 7.1, that \eqref{addit.asymp} is correct provided we suppose that a rather restrictive additional condition is satisfied,
 namely, that  surfaces $\Si_j$  are disjoint.

 However,  properly modified, \eqref{addit.asymp} is still correct. In order to formulate it, we introduce, for  given Lipschitz surfaces $\Si_j,$ $j=1,\dots,\nb$ in $\R^\Nb$ and real densities $V_j\in L^{\Psi,\m_j}(\Si_j),$ the signed measure
  \begin{equation}\label{sum.mesure}
  P=\sum P_j=\sum V_j \m_j
 \end{equation}
(measures $\m_j$, $P_j$ and densities $V_j$ are extended, as usual,  to $\R^{\Nb}$ by zero,
 \begin{equation*}%\label{Ext.Lip}
  P_j(E):=P_j(E\cap \Si_j)=\int_{E\cap\Si_j}V_j(X)d\m_j(X),
 \end{equation*}
 for a Borel set $E\subset \R^{\Nb}.$)
 A visual picture of $P$ is the following. Let $\m$ be the $d$-dimensional Hausdorff measure on $\XF=\cup \Si_j$. For each point $X\in\XF$, we define $V$ as $\sum V_j(X),$ over thise $j$ for which $X\in\Si_j.$ The standardly defined  positive and negative parts of measure $P$ equal $P_{\pm}=V_{\pm}(X)\m.$
 \begin{thm}\label{Th.Add.Asymp} In the above notations
 \begin{equation}\label{SumMeasures}
    \lim_{\la\to 0} \la n_{\pm}(\la, \Tb_{P}) =\Zb(d,\Nb-d) \int_{\XF}P_{\pm}(dX)=\int_{\XF}\NF(P_{\pm})(dX).
 \end{equation}
 \end{thm}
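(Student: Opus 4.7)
The plan is to decompose $\XF$ into finitely many pairwise disjoint Borel pieces, each contained in a single Lipschitz surface, apply Theorem~\ref{Th.As.RSh} to each piece, and combine via the disjoint-case additivity (Theorem~7.1 of \cite{RSh2}) together with the asymptotic perturbation lemma (Lemma~\ref{BSLemma}) and the upper eigenvalue estimate (Theorem~\ref{EstTheor}) to patch the pieces together and absorb the overlaps.

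For each nonempty $S\subseteq\{1,\dots,\nb\}$ set
\[
\XF_S:=\bigcap_{j\in S}\Si_j\setminus\bigcup_{j\notin S}\Si_j.
\]
These are pairwise disjoint Borel sets whose union is $\XF$, and each $\XF_S$ is contained in $\Si_{j(S)}$ for an arbitrary choice $j(S)\in S$. On $\XF_S$ the pointwise sum $V=\sum_{j\in S}V_j$ lies in $L^{\Psi,\m}$ as a finite sum of $L^{\Psi,\m_j}$-functions, and $\m_j|_{\XF_S}=\Hc^d|_{\XF_S}=\m|_{\XF_S}$ for every $j\in S$. Hence $P=\sum_S V\mathbf{1}_{\XF_S}\m_{j(S)}$, and additivity of the quadratic form \eqref{quadrFormSing} yields $\Tb_P=\sum_S \Tb_{V\mathbf{1}_{\XF_S}\m_{j(S)}}$. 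Moreover the Hahn decomposition of the signed measure $P$ coincides, on each $\XF_S$, with the pointwise decomposition of $V$ into positive and negative parts, so that $P_\pm=V_\pm\m$.

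Given $\eta>0$, by inner regularity of $\m$ we choose a compact set $K_S^\eta\subseteq\XF_S$ with $\m(\XF_S\setminus K_S^\eta)<\eta$. Since the $\XF_S$ are pairwise disjoint and the $K_S^\eta$ are compact, the family $\{K_S^\eta\}_S$ consists of pairwise disjoint compact subsets of $\R^{\Nb}$ and is therefore pairwise at positive Euclidean distance. Extending $V\mathbf{1}_{K_S^\eta}$ by zero to $\Si_{j(S)}$, Theorem~\ref{Th.As.RSh} with $\AF=\AF_0$ gives
\[
\lim_{\la\to 0}\la\,n_{\pm}\!\big(\la,\Tb_{V\mathbf{1}_{K_S^\eta}\m_{j(S)}}\big)=\Zb(d,\dF)\int_{K_S^\eta}V_\pm\,d\m.
\]
Using Proposition~\ref{Prop.Local}, in small pairwise disjoint neighborhoods of the $K_S^\eta$ one may replace the single pseudodifferential operator $\AF_0$ by compactly supported pseudodifferential operators of identical principal symbol whose supports are pairwise disjoint, modifying the eigenvalue asymptotics only by $o(\la^{-1})$. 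The disjoint-surfaces additivity (Theorem~7.1 of \cite{RSh2}) then delivers
\[
\lim_{\la\to 0}\la\,n_{\pm}\!\Big(\la,\sum_S \Tb_{V\mathbf{1}_{K_S^\eta}\m_{j(S)}}\Big)=\Zb(d,\dF)\int_{K^\eta}V_\pm\,d\m,\qquad K^\eta:=\bigsqcup_S K_S^\eta.
\]

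Finally, the remainder $\Tb_P-\sum_S\Tb_{V\mathbf{1}_{K_S^\eta}\m_{j(S)}}=\Tb_{V\mathbf{1}_{\XF\setminus K^\eta}\m}$ is controlled by Theorem~\ref{EstTheor}:
\[
\limsup_{\la\to 0}\la\,n_{\pm}\!\big(\la,\Tb_{V\mathbf{1}_{\XF\setminus K^\eta}\m}\big)\le C(\m)\,\|V\mathbf{1}_{\XF\setminus K^\eta}\|^{(av,\Psi;\m)},
\]
and the right-hand side tends to $0$ as $\eta\to 0$, since $\m(\XF\setminus K^\eta)\to 0$ and $V\in L^{\Psi,\m}$. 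Invoking Lemma~\ref{BSLemma} yields
\[
\lim_{\la\to 0}\la\,n_{\pm}(\la,\Tb_P)=\Zb(d,\dF)\int_{\XF}V_\pm\,d\m=\int_{\XF}\NF(P_\pm)(dX),
\]
which is \eqref{SumMeasures}. The main obstacle is the argument in the third paragraph: even though the surfaces $\Si_j$ themselves may overlap, one must leverage the pairwise Euclidean separation of the compact pieces $K_S^\eta$ via Proposition~\ref{Prop.Local} to genuinely reduce to the disjoint-surfaces setting of \cite{RSh2}, while keeping the error caused by modifying $\AF_0$ away from each piece of order $o(\la^{-1})$; this is the sole step where the geometric interaction between the $\Si_j$ enters, and all of it is absorbed into the localization principle.
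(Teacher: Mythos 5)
Your proof is correct, and it reaches the conclusion by a genuinely different organization than the paper. The paper argues by induction on the number $\nb$ of surfaces: at each step the newly added surface $\Si_\nb$ is split into its intersection with the previous union $\XF_{\nb-1}$ (absorbed into the inductive hypothesis), a thin collar inside an $\ve$-neighborhood of $\XF_{\nb-1}$ (discarded via the small averaged Orlicz norm and Theorem \ref{EstTheor}), and the remaining part at distance $\ge\ve$ from $\XF_{\nb-1}$ (handled by the separated-support relation \eqref{separated supp}); Lemma \ref{BSLemma} then closes the induction. You instead perform a single global disjointification of $\XF$ by multiplicity pattern, use inner regularity of the Radon measure to shrink each piece to a compact set, and exploit the elementary fact that pairwise disjoint compacts are pairwise at positive distance, which puts you directly in the separated-support setting; the discarded set again has small averaged Orlicz norm and Lemma \ref{BSLemma} finishes. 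Your route avoids the induction and treats all overlaps symmetrically at once, at the cost of a combinatorial (but finite) decomposition and an appeal to inner regularity that the paper does not need. Two presentational remarks: the operative additivity statement is the separated-supports localization \eqref{separated supp} (Lemma 3.1 of \cite{RSh2}), not the disjoint-surfaces Theorem 7.1 of that paper, and since \eqref{separated supp} is stated for a fixed operator acting on a sum of measures with separated supports, the detour through Proposition \ref{Prop.Local} to manufacture pseudodifferential operators with disjoint supports is not needed --- the separation of the sets $K_S^\eta$ alone suffices.
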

  Of course, Theorem \ref{Th.Add.Asymp} is a considerably stronger assertion than the measurability theorem \ref{FinitProp}. Therefore it is not surprising that its proof is somewhat more technical. Readers interested only in Connes' integrability may skip the proof to follow.

  In proving Theorem \ref{Th.Add.Asymp}, we will use an important localization property established in \cite{RSh2}, see Lemma 3.1 there. Namely, if a measure $P$ is supported on two separated sets, i.e.,  $P=P^1+P^2$, $P^\io$ is supported in $\XF^\io$, $\io=1,2$, and the distance between the sets $\XF^1,\XF^2$ is positive, then
 \begin{equation}\label{separated supp}
    n_{\pm}(\la, \Tb_{P^1+P^2})-n_{\pm}(\la, \Tb_{P^1})-n_{\pm}(\la, \Tb_{P^2})=o(\la^{-1}),
 \end{equation}
 as $\la\to 0$. This can be understood as that in the case of separated measures,  up to a lower order error, the eigenvalues of $\Tb_{P^1+P^2}$ behave asymptotically as the eigenvalues of the direct sum of operators $\Tb_{P^\io}.$

 \begin{proof} In the proof we act by induction on the quantity $\nb$ of surfaces involved. As the base of induction,  for just one surface, the statement is contained in Theorem \ref{Th.As.RSh}.
 Now we explain informally the inductive step first, the details to follow further on.  Supposing that our statement is proved for the union $\XF$ of $\nb-1$ surfaces, we add one more surface, $\Si_{\nb}$ with density $V_\nb$. For a small $\ve>0$, we consider the $\ve$-neighborhood $\Gc_{\ve}$ of $\XF.$ The surface $\Si_{\nb}$  is split into three parts: the first one is the part of $\Si_{\nb}$ intersecting with $\XF$, the  second one is the part of $\Si_{\nb}$ lying in $\Gc_{\ve}$ but not in $\XF_{\nb}$, and the rest, the part of $\Si_{\nb}$  lying outside the neighborhood $\Gc_{\ve}$. Correspondingly, the measure $P_{\nb}$ splits into three parts. The first of these three measures, supported in $\XF,$ is added to the already given measure on $\XF$, and to the corresponding operator the inductive assumption is applied. This operator and the third one in the splitting of $P_{\nb}$ correspond to separated sets  the localization lemma 3.1 in \cite{RSh2} applies. As for the remaining measure on  the second one in the splitting, the eigenvalue counting function for the corresponding operator is small.

 Now, more formally, suppose that \eqref{SumMeasures} holds for $\nb-1$ surfaces $\Si_j, \, j=1,\dots,\nb-1,$ and we add one more surface, $\Si_{\nb}$ with density $V_\nb(X)\in L^{\Psi,\m_{\nb}}$, $P_{\nb}=V_{\nb}\m_{\nb}$, $P=\sum_{j\le\nb} P_j$. It is important to note that the set $\XF_{\nb}=\cup_{j\le\nb} \Si_j$ is Ahlfors $d$-regular.  If surface $\Si_{\nb}$ is disjoint with $\XF_{\nb-1}=\cup_{j<\nb}\Si_j$, these sets are separated (due to compactness) and our statement follows from \cite{RSh2},  Lemma 3.1, immediately. Now let   $\Si_{\nb}$ have a nonempty intersection $\Si'_{\nb}$ with $\XF_{\nb-1}$. Denote by $P_{\nb}'$ the restriction of the measure $P_{\nb}=V_{\nb}\m_{\nb}$ to $\Si'_{\nb}$ and by $\hat{P}_{\nb}$ the remaining part of $P_{\nb},$ i.e., the restriction of $P_{\nb}$ to the set $\Si_\nb\setminus  \XF_{\nb-1}$. Now we re-arrange our measures in the following way. We denote by $\check{P}$ the measure $\sum_{j<\nb}P_j+P_{\nb}',$  so
 \begin{equation*}%\label{Preordered}
    P=\sum_{j\le\nb}{P_j}=\check{P} +\hat{P}_{\nb}.
 \end{equation*}
 Consider an $\ve$-neighborhood $\Gc_\ve$ of the set $\XF_{\nb-1}$. As $\e\to 0$,  Hausdorff measure $\Hc^d$ of the set $\Yc_\ve=(\Si_{\nb}\setminus\XF_{\nb-1})\cap \Gc_\ve$ tends to zero, therefore the averaged norm  $\|V_\nb\|^{\Psi,\m}_{\YF_\ve}$ tends to zero as well. We denote by $P_{\nb,\ve}$ the restriction of $P_{\nb}$ to the set $\YF_\ve$ and  by $P'_{\nb,\ve}$ the restriction of $P_{\nb}$ to $\ZF_{\ve}=\Si_{\nb}\setminus \Gc_\ve$. In this way, operator $\Tb_{P}$  splits into the sum
 \begin{equation}\label{Splitting}
 \Tb_{P}=\Tb_{\check{P}}+\Tb_{P'_{\nb,\ve}}+\Tb_{P_{\nb,\ve}}.
 \end{equation}
 In this splitting, the first operator is constructed by means of the measure supported on the union of $\nb-1$ Lipschitz surfaces, so the inductive assumption applies and the eigenvalue asymptotic formula of the type \eqref{SumMeasures} is valid. In the second operator, only one Lipschitz surface $\Si_\nb$ is involved, so by the base of induction, the asymptotic eigenvalue formula is holds as well. Note now that the measures in these two terms are supported in sets whose distance is at least $\ve$. Therefore \eqref{separated supp} applies, and
 \begin{gather}\label{twoOf Three}
  \lim_{\la\to0}  \la n_{\pm} (\la,\Tb_{\check{P}}+\Tb_{P'_{\nb,\ve}})= \lim_{\la\to0} \la n_{\pm} (\la,\Tb_{\check{P}})+\lim_{\la\to0} \la n_{\pm} (\la,\Tb_{P'_{\nb,\ve}})=\\\nonumber
\Zb(d,\dF)\left[\int_{\XF_{\nb-1}}P_{\pm}(dX) +\int_{\Zc_{\ve}}V_{\nb(X),\pm}d\m_\nb(X)\right]
 \end{gather}
 The third term in \eqref{Splitting} is the operator associated with measure $P_{\nb,\ve}$, i.e., supported in the part of $\Si_{\nb}$ lying in the $\ve$-neighborhood of $\XF_{\nb-1}$ but outside $\XF_{\nb-1}$. By  Theorem \ref{Th.As.RSh}, for the eigenvalues of this operator, the estimate holds,

 \begin{equation}\label{rem.est}
 n_{\pm}(\la,\Tb_{P_{\nb,\ve}} )\le C\la^{-1}\|V\|^{av,\Psi, }_{\YF_\ve}.
\end{equation}
Now, by choosing $\ve$  small enough, we can make the coefficient in \eqref{rem.est} arbitrarily small. Thus, again we can apply asymptotic perturbation Lemma 1.5 in \cite{BS}, which enables to pass to limit as $\ve\to 0$ on the left-hand side in \eqref{twoOf Three}, obtaining the left-hand side in \eqref{SumMeasures}.   The same passage to limit on the right-hand side in \eqref{twoOf Three} produces the required  quantity on the right-hand side in \eqref{SumMeasures}.
  \end{proof}
\subsection{Finite unions of Lipschitz surfaces of different dimensions.} Let, for each  $d=1,\,\dots,\Nb,$ a finite collection of compact Lipschitz surfaces be given,
$\Si_j^d,$ $1\le d\le\Nb,$ $j\le j_d.$ For $d=\Nb,$ a bounded open  set in  $\R^\Nb$ acts as $\Si_1^\Nb$. Let real densities $V_j^d(X)$ be given on surfaces $\Si_j^d$,
\begin{equation}\label{Cond.surface}
 V_j^d\in L^{\Psi,\m_j^d}(\Si_{j}^d),
 \end{equation}
where $\m_j^d=\m_{\Si_j^d}$ is the $d$-dimensional Hausdorff measure on $\Si_j^d.$ We consider measures $P_j^d= V_j^d\m_j^d.$ For $d=\Nb$ such measure is absolutely continuous with respect to the Lebesgue measure in $\R^\Nb;$ for $d<\Nb$ measures $P_j^d$ are singular.

We denote $P=\sum_{j,d}P^{d}_j$ and introduce the corresponding operator $\Tb_{P}.$ By considering quadratic forms, we immediately see that, under our conditions, this operator is bounded and equals the sum of operators $\Tb_{P_{j}^d};$ since each of the latter operators is compact, the same is correct for  $\Tb_{P}.$

In \cite{RSh2} we demonstrated some examples of measures with both absolutely continuous and singular components present. Generally, for $\AF=\AF_0$, the spectral problem for operator $\Tb$ with such measures is equivalent  to the eigenvalue problem for (pseudo) differential operator, containing the spectral parameter both in the equation and in transmission conditions on surfaces $\Si_j^d$ of  dimensions $d<\Nb$.

First of all, by the Ky Fan theorem, we obtain automatically the eigenvalue estimates for $\Tb_{P},$
\begin{equation}\label{Sum.diff.est}
    n_{\pm}(\la,\Tb_{P}) \le C \sum_{d,j}\|V_j^d\|^{\Psi,\m_j^d}\la^{-1}.
\end{equation}
The constant in \eqref{Sum.diff.est} depends on the quantity of surfaces present and is of no interest for us at the moment.

The results obtained in Sections \ref{subs.samedim.trace}, together with the linearity of the singular trace,  lead immediately to the integrability statement.
\begin{thm}\label{trace.sum.diff} Let $V_j^d$ satisfy condition \eqref{Cond.surface}. Then operator $\Tb_{P}$ is Connes measurable
and
\begin{equation}\label{Summa.trace}
    \ta(\Tb_{P})=\sum_{d,j}{\Zb(d,\Nb-d)}\int_{\Si_{j}^d}P(dX)=\int \NF(P)(dX).
\end{equation}
\end{thm}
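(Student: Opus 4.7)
The plan is to reduce the statement, dimension by dimension, to the already-proved Theorem~\ref{FinitProp}, and then glue the pieces together by linearity of the singular trace. Directly from the quadratic-form definition \eqref{quadrFormSing} and the finite additivity of $P$, one has $\tb_{P,\AF_0}[u] = \sum_{d,j} \int_\Om |(\AF_0 u)(X)|^2 P_j^d(dX)$, so that $\Tb_P = \sum_{d,j} \Tb_{P_j^d}$ as bounded operators on $L_2(\Om)$. Grouping by dimension, set $Q^d := \sum_{j\le j_d} P_j^d$; this is a finite sum of measures on compact Lipschitz surfaces sharing one common dimension $d$, and $\Tb_P = \sum_{d=1}^{\Nb} \Tb_{Q^d}$.

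For each $d$ with $0 < d < \Nb$, Theorem~\ref{FinitProp} applies verbatim and yields that $\Tb_{Q^d}$ is Dixmier--Connes measurable with
\begin{equation*}
\ta(\Tb_{Q^d}) \;=\; \Zb(d,\Nb-d)\sum_{j\le j_d}\int_{\Si_j^d} V_j^d\, d\m_j^d \;=\; \int \NF(Q^d)(dX),
\end{equation*}
for every normalized positive singular trace $\ta$ on $\MF_{1,\infty}$. The top-dimensional stratum $d=\Nb$ requires a separate argument because $Q^\Nb$ is absolutely continuous and falls outside the Lipschitz-surface framework of Theorem~\ref{Th.As.RSh}: here $\Tb_{Q^\Nb}$ is exactly the classical Cwikel operator associated with an $L^{\Psi}$ weight, whose measurability together with the trace formula built from Connes' constant $\pmb{\varpi}_\Nb$ follow from the results recalled in the Introduction, see \eqref{Weylw} and \cite{LSZ}, \cite{SZSol}. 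With the natural convention $\Zb(\Nb,0) := \pmb{\varpi}_\Nb$ absorbed into the operator $\NF$, this contribution also takes the form $\int \NF(Q^\Nb)(dX)$.

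The remainder is a formal consequence of linearity. By Theorem~\ref{EstTheor} together with the estimates available for the absolutely continuous stratum, every $\Tb_{Q^d}$ belongs to $\SF_{1,\infty} \subset \MF_{1,\infty}$; hence so does their finite sum $\Tb_P$. Applying $\ta$, which is linear on $\MF_{1,\infty}$ after the standard extension from the positive cone to signed operators (cf.\ the discussion following \eqref{weaklim} and \cite{LSZ.Trace}), we obtain
\begin{equation*}
\ta(\Tb_P) \;=\; \sum_{d=1}^{\Nb} \ta(\Tb_{Q^d}) \;=\; \sum_{d,j}\Zb(d,\Nb-d)\int_{\Si_j^d} P_j^d(dX) \;=\; \int \NF(P)(dX),
\end{equation*}
and the independence of the right-hand side of the choice of $\ta$ establishes Connes measurability of $\Tb_P$.

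I do not anticipate a genuine analytic obstacle: the argument is purely structural and all the hard work is already packaged in Theorem~\ref{FinitProp}. The only point requiring brief verification is the bookkeeping at $d=\Nb$, namely confirming that formula \eqref{IntCoeff} for $\Zb(d,\dF)$, which degenerates as $\dF \to 0$, should be replaced in that boundary case by $\pmb{\varpi}_\Nb$, so that \eqref{Summa.trace} reads uniformly across all dimensional strata. Once this convention is fixed, the decomposition into strata and the linearity of $\ta$ complete the proof.
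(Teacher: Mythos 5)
Your proposal is correct and is essentially the paper's own argument: decompose $\Tb_P$ via quadratic forms into the operators $\Tb_{P_j^d}$, apply Theorem~\ref{FinitProp} stratum by stratum, and conclude by linearity of the singular trace, with measurability following because the resulting value is independent of $\ta$. Your extra care with the $d=\Nb$ stratum (invoking the classical Cwikel/Connes trace formula and fixing the convention for $\Zb(\Nb,0)$, where \eqref{IntCoeff} degenerates) is a fair and welcome clarification of a point the paper passes over silently.
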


The proof of the result on the eigenvalue asymptotics  takes a little bit more work. We show that contributions of components of measure $P$ supported on surfaces of different dimension add up in the asymptotic formula.
\begin{thm}\label{Th.SumAs}In conditions of Theorem \ref{trace.sum.diff},
we denote by  $\XF_{d}$ the set $\cup_j\Si_j^d,$ and introduce measure $P^d=\sum_{j}P^d_j$, as in Theorem \ref{Th.Add.Asymp}. Then
\begin{equation}\label{As.Sum.dif}
    n_{\pm}(\la,\Tb_P )\sim \la^{-1}\sum_d \Zb(d,\Nb-d)\int_{\XF_d}P^d_{\pm} =\la^{-1}\int\NF(P_{\pm})(dX).
\end{equation}
\end{thm}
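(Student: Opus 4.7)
The plan is to proceed by induction on the number $k$ of distinct dimensions $d$ for which some component $P^d$ in the measure $P$ is nontrivial. The base case $k=1$ is exactly Theorem \ref{Th.Add.Asymp} (or the classical Birman--Solomyak Weyl law when the single dimension present is $d=\Nb$ and the measure is absolutely continuous).

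For the inductive step, let $d_0$ be the \emph{smallest} dimension carrying a nontrivial part of $P$ and decompose $P=P^{d_0}+Q$, where $Q=\sum_{d>d_0}P^d$ involves only $k-1$ distinct dimensions. By Theorem \ref{Th.Add.Asymp} the asymptotics of $\Tb_{P^{d_0}}$ is known, and by the inductive hypothesis so is that of $\Tb_Q$. The obstruction to simply adding these asymptotics is that the support $\XF_{d_0}=\bigcup_j\Si_j^{d_0}$ may intersect the higher-dimensional surfaces, so the separated-support localization \eqref{separated supp} is not directly applicable. To overcome this, fix a small $\ve>0$ and let $\Gc_\ve$ be the open $\ve$-neighborhood of $\XF_{d_0}$. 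Split $Q=Q_\ve+Q_\ve^{\circ}$, where $Q_\ve$ is the restriction of $Q$ to $\Gc_\ve$ and $Q_\ve^{\circ}$ is the restriction to its complement. Since $\supp(P^{d_0}+Q_\ve)\subset\overline{\Gc_\ve}$ and $\supp Q_\ve^{\circ}$ lies at distance $\ge\ve$ from $\XF_{d_0}$, the localization lemma yields
\begin{equation*}
n_{\pm}(\la,\Tb_P)-n_{\pm}(\la,\Tb_{P^{d_0}+Q_\ve})-n_{\pm}(\la,\Tb_{Q_\ve^{\circ}})=o(\la^{-1}).
\end{equation*}
The third term satisfies the inductive hypothesis (it still involves only $k-1$ dimensions), and the first term is $\Tb_{P^{d_0}}+\Tb_{Q_\ve}$, again with separated components: $\supp P^{d_0}$ and $\supp Q_\ve$ need not be separated, but now we treat $\Tb_{Q_\ve}$ as a small perturbation.

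The key ingredient making the perturbation small is that for each $d'>d_0$ and each $j$, the intersection $\Si_j^{d'}\cap\XF_{d_0}$ has vanishing $\Hc^{d'}$-measure, being contained in a finite union of $d_0$-dimensional Lipschitz sets with $d_0<d'$. By monotone convergence $\Hc^{d'}\bigl(\Si_j^{d'}\cap\Gc_\ve\bigr)\to 0$ as $\ve\to 0$, and, since $V_j^{d'}\in L^{\Psi,\m_j^{d'}}$, absolute continuity of the averaged Orlicz norm gives
\begin{equation*}
\bigl\|V_j^{d'}\bigr\|^{(av,\Psi,\m_j^{d'})}_{\Si_j^{d'}\cap\Gc_\ve}\longrightarrow 0,\qquad \ve\to 0.
\end{equation*}
Applying Theorem \ref{Est.Lip.Th} to each piece and summing by the Ky Fan inequality produces $\limsup_{\la\to 0}\la\, n_{\pm}(\la,\Tb_{Q_\ve})\le\delta(\ve)$ with $\delta(\ve)\to 0$. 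Write $\Tb_P=\Tb_\ve+\Tb_\ve'$ with $\Tb_\ve=\Tb_{P^{d_0}}+\Tb_{Q_\ve^{\circ}}$ and $\Tb_\ve'=\Tb_{Q_\ve}$ (modulo an $o(\la^{-1})$ term already absorbed above). The asymptotic coefficients of $\Tb_\ve$, by Theorem \ref{Th.Add.Asymp} plus inductive hypothesis, equal
\begin{equation*}
C^{\pm}_\ve=\Zb(d_0,\Nb-d_0)\!\int P^{d_0}_{\pm}\,+\sum_{d>d_0}\Zb(d,\Nb-d)\!\int_{\XF_d\setminus\Gc_\ve}\!P^{d}_{\pm},
\end{equation*}
and since $\XF_d\setminus\Gc_\ve$ exhausts $\XF_d$ up to an $\Hc^d$-null set, $C^{\pm}_\ve$ converges as $\ve\to 0$ to $\sum_d\Zb(d,\Nb-d)\int P^d_{\pm}=\int\NF(P_\pm)\,dX$. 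The Birman--Solomyak asymptotic perturbation Lemma \ref{BSLemma} then delivers exactly \eqref{As.Sum.dif}.

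The main difficulty is precisely the handling of unseparated components of different dimension; it is resolved here by exploiting the fact that a strictly lower-dimensional set is negligible for the Hausdorff measure of any higher dimension, so a thin tubular neighborhood absorbs no mass in the limit. Within a single dimension the same trick fails, but that situation is already settled by Theorem \ref{Th.Add.Asymp}, which is why the induction is carried out over dimensions rather than over surfaces.
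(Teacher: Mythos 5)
Your argument is correct and rests on exactly the same two pillars as the paper's proof: (i) a thin tubular neighborhood of a lower-dimensional set carries vanishing $\Hc^{d'}$-mass for every higher $d'$, hence vanishing averaged Orlicz norm of the density, and (ii) the separated-support localization \eqref{separated supp} combined with the Birman--Solomyak perturbation Lemma \ref{BSLemma}. The only organizational difference is that you induct on the number of distinct dimensions, peeling off the lowest one, whereas the paper constructs in one sweep a family of pairwise separated sets $\ZF_d$ by successively removing small neighborhoods from $d=1$ upward and then localizes over the whole family at once; the two schemes are interchangeable.

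One intermediate step is misstated: the displayed relation
$n_{\pm}(\la,\Tb_P)-n_{\pm}(\la,\Tb_{P^{d_0}+Q_\ve})-n_{\pm}(\la,\Tb_{Q_\ve^{\circ}})=o(\la^{-1})$
does not follow from \eqref{separated supp}, because $\supp Q_\ve\subset\overline{\Gc_\ve}$ and $\supp Q_\ve^{\circ}\subset\R^{\Nb}\setminus\Gc_\ve$ may touch along $\partial\Gc_\ve$, so the two pieces are not at positive distance. Fortunately this step is not needed: the identity $\Tb_P=\Tb_{P^{d_0}}+\Tb_{Q_\ve^{\circ}}+\Tb_{Q_\ve}$ is exact at the level of quadratic forms, the localization lemma is only required for the pair $(P^{d_0},Q_\ve^{\circ})$, whose supports are at distance $\ge\ve$, and $\Tb_{Q_\ve}$ is the small perturbation. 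Your closing decomposition $\Tb_\ve=\Tb_{P^{d_0}}+\Tb_{Q_\ve^{\circ}}$, $\Tb_\ve'=\Tb_{Q_\ve}$ already does precisely this (no ``$o(\la^{-1})$ term'' needs to be absorbed), so simply delete the faulty display or replace $\Gc_\ve$ by two nested neighborhoods if you insist on a three-way separation.
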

\begin{proof}    We suppose, for simplicity, that all $\XF_d$ are nonempty. The reasoning then is similar to the one in Theorem \ref{Th.Add.Asymp}. We show that by cutting away arbitrarily small pieces $\YF_d$ of $\XF_d$, in the sense of $\Hc^d$ measure, we can make the remaining parts of $\ZF_{d}=\XF_d\setminus \YF_d$ separated. As soon as this is done, similarly to  \eqref{twoOf Three}, for  operator $\Tb_{P}$, the
leading contributions to the eigenvalue asymptotics corresponding to the restrictions of measures $P_d$ to $\ZF_d$ add up, while the contribution by these measures restricted to $\YF_d$ are small, and, again, the asymptotic perturbation lemma applies.

It remains to construct the sets $\F_d$. Consider $\Gc_1(\de_1),$ the  $\de_1$-neighborhood of $\XF_1$ in $\R^\Nb$. The Lebesgue measure of $\Gc_1(\de_1)$ tends to zero like $\de_1^{\Nb-1}$ as $\de_1\to 0$. Therefore for sufficiently small $\de_1,$ the portion of $\XF_2$ in $\Gc_1(\de_1)$ has $ \Hc^2$ measure smaller than a prescribed $\ve$. So we set $\ZF_1=\XF_1$ and $\ZF_2=\XF_2\setminus \Gc_1$. Then we take a $\de_2$-neighborhood $\Gc_2(\de_2) $ of $\ZF_2$ (the latter, recall, has Hausdorff dimension $2$). We take $\de_2$ such small that $\Gc_2(\de_2)\cap \XF_3$ have corresponding $\Hc^3$-measure less than $\ve$ and set $\ZF_3=\XF_2\setminus \Gc_2$. We continue this procedures in all dimensions removing a piece of small measure on each step so that the remaining sets $\ZF_d$ are separated. The smallness of the Hausdorff measures of the sets $\YF_d$ implies smallness of averaged Orlicz norms of densities $V_d$ over these sets. This leads to eigenvalue estimates by Theorem \ref{EstTheor}, an, finally, to the eigenvalues asymptotics by Lemma \ref{BSLemma}.
\end{proof}
\section{Connes measurability and rectifiable sets}\label{Rect.Sect}
In this section we extend the measurability and asymptotics results to measures supported on rectifiable sets. Such sets form an important topic in Geometric Measure Theory.
\subsection{Densities of measures} We recall here some key definitions and facts, \cite{Fal}, \cite{Mat1},  \cite{Fed},  \cite{Preiss}, \cite{De Lellis}  being our main reference sources.
\begin{def}\label{def.rect} A compact set $\XF\subset \R^\Nb$ is $d$-rectifiable if there exist a finite or countable collection of subsets $\Ac_j\subset \R^{d}$ and Lipschitz mappings $\pmb{\f}_{j}:\Ac_j\to \R^{\Nb}$ so that  $\Hc^{d}(\XF\setminus\cup_j\pmb{\f}_{j}(\Ac_j) )=0.$
\end{def}
In other words, $\XF$ should be, up to a set of zero Hausdorff measure, the union of not more than countably many Lipschitz surfaces.

An extensive literature deals with criteria for a set to be rectifiable. Sufficient conditions for rectifiability are usually expressed in terms of $s$-densities.
Let $\m$ be a finite Radon measure on $\R^{\Nb}$, $\XF=\supp\m$. For a point $X\in\XF,$ the upper and lower densities of order $s\in(0,\Nb]$ at $X$ are defined as
\begin{equation}\label{Dens.def}
    \Th^{*s}(\m,X)=\limsup_{r\to 0}r^{-s}\m(B(X,r));\, \Th^{s}_{*}(\m,X)=\liminf_{r\to 0}r^{-s}\m(B(X,r))
\end{equation}
(the infinite and zero values are allowed.) If these densities coincide, their common value, $ \Th^{s}(\m,X)$, is called the \emph{density} of order $s$ at $X$. In case of $\m$ being the $s$-dimensional Hausdorff measure, we replace $\m$ by $\XF$ in these notations.
 \begin{rem}\label{RemDens}Of course, if  measure $\m$ is $s$-Ahlfors regular then $0< \Cb \le\Th^{s}_{*}(\m,X)\le \Th^{s*}(\m,X)\le\Cb^{-1}$ for all $X\in\XF,$ where $\Cb$
 is the constant in \eqref{Aregular}. \end{rem}
We are interested in compact sets further on. The case when densities coincide is dealt with by \emph{ Marstrand's theorem}.
\begin{thm}[Theorem 14.10 in \cite{Mat1}]\label{Marst} Suppose that for a certain $s ,$ there exists a Radon measure $\m$ on $\R^{\Nb}$ such that for $X$, $\m$-almost everywhere, the upper and lower density at $X$ coincide and, moreover, their common value is finite and nonzero. Then $s$ is an integer, $s=d\in\N$.
\end{thm}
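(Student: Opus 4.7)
The plan is to follow the classical blow-up (tangent measure) argument that underlies Marstrand's theorem, so I will not reproduce all the measure-theoretic technicalities but describe the three logical steps and identify where the real difficulty lies.

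First I would restrict attention to a compact subset $\XF_0 \subset \XF$ of positive $\m$-measure on which the density $\Th^s(\m,X) = \th(X)$ exists and satisfies $0 < c \le \th(X) \le C < \infty$; such a subset exists by the hypothesis together with standard truncation. On $\XF_0$, the two-sided density estimate implies that $\m\lfloor_{\XF_0}$ is comparable to $\Hc^s \lfloor_{\XF_0}$, and, more importantly, the ratio $\m(B(X,2r))/\m(B(X,r))$ converges to $2^s$ as $r\to 0$ for $\m$-a.e.\ $X\in\XF_0$.

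Next I would carry out the blow-up step. For a $\m$-generic point $X_0\in\XF_0$, consider the rescaled measures $\m_r(E) := r^{-s}\m(X_0+rE)$ and extract, via a weak-$*$ compactness argument (using that $\m_r(B(0,R)) \le C R^s$ uniformly in $r$), a tangent measure $\n$ at $X_0$. Using the Lebesgue differentiation theorem for $\m$ in conjunction with the a.e.\ existence of $\th$, one shows that for a.e.\ choice of $X_0$, every tangent measure $\n$ obtained in this way is \emph{$s$-uniform}, meaning there is a constant $c_\n > 0$ with
\begin{equation*}
\n(B(x,t)) = c_\n\, t^s \qquad \text{for all } x \in \supp\n,\ t > 0 .
\end{equation*}
The reason is that the density condition, rescaled, passes to the limit at every point of the support simultaneously.

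The decisive step, and the main obstacle, is then to show that an $s$-uniform measure on $\R^\Nb$ can exist only for integer $s$. This is a genuinely geometric fact and not a soft limit argument: one tests the uniformity property against a well-chosen function (classically, Marstrand's original argument compares $\n(B(x,t))$ for two nearby centers $x, x'\in\supp\n$ and expands $|y-x'|^2 = |y-x|^2 + 2(x-x')\cdot(x-y) + |x-x'|^2$ inside $B(x,t)$, producing an identity whose $s$-dependence is polynomial only for integer $s$). Equivalently, one can compute $\int \exp(-|y|^2)\,d\n(y)$ in two ways using the layer-cake formula and the uniformity relation; the resulting identity $c_\n\, \G(s/2+1)\,\pi^{-s/2} = \text{(geometric quantity)}$ forces $s/2$ to be a half-integer via an analyticity argument on the parameter $s$. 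Either route yields $s\in\N$, which is the assertion.

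Finally, I would combine the three steps: the density hypothesis gives good points $X_0$; blow-up at $X_0$ produces an $s$-uniform tangent measure $\n$ (which is nontrivial because $\th(X_0)>0$); and the rigidity lemma for $s$-uniform measures forces $s\in\N$. The bulk of the work, and the only step where one cannot avoid substantial computation, is the rigidity lemma for uniform measures; the density hypothesis and the extraction of tangent measures are by now standard tools in geometric measure theory.
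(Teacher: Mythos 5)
This statement is Marstrand's density theorem; the paper does not prove it but quotes it verbatim from \cite{Mat1} (Theorem 14.10 there), so there is no in-paper argument to compare against. Your sketch reproduces the standard proof from that reference: truncate to a compact set where the density exists and is pinched between positive constants, blow up to obtain a nontrivial $s$-uniform tangent measure, and invoke the rigidity of uniform measures. The outline is correct, with two caveats. First, the claim that uniformity of the tangent measure ``passes to the limit at every point of the support simultaneously'' is not merely a rescaling of the density hypothesis: you need the separate (standard, but nontrivial) fact that at $\mu$-a.e.\ point $X_0$ every tangent measure $\nu$ has the property that its translates $\nu(y+\cdot)$, $y\in\supp\nu$, are again tangent measures of $\mu$ at $X_0$ (Theorem 14.3 in \cite{Mat1}); only through that lemma does the existence of the density at the single point $X_0$ propagate to all centers in $\supp\nu$. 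Second, your proposed alternative for the rigidity step does not work as stated: the layer-cake computation gives $\int e^{-|y-x|^2}\,d\nu(y)=c_\nu\,\Gamma(s/2+1)$ for every $x\in\supp\nu$ and every real $s>0$, so the identity is consistent for all $s$ and yields no constraint by itself; there is no ``analyticity in $s$'' shortcut forcing $s/2$ to be a half-integer. The integrality genuinely comes from Marstrand's geometric comparison of $\nu(B(x,t))$ for nearby centers $x,x'\in\supp\nu$ (your first route), carried out under the assumption $m<s<m+1$ to reach a contradiction; that is the argument in \cite{Mat1}, and it is the one step that cannot be softened.
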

\subsection{Rectifiability conditions}
These conditions  can be found, e.g., in Sections 14-17 in \cite{Mat1} and Ch.3 in \cite{Fed}. We present here just a few, using density terms, see
\cite{Preiss}.
\begin{thm}[Density condition]\label{Rect.thm}The Borel set $\XF$  is rectifiable if and only if the density $\Th^d(\XF,X)$ exists, is positive and finite for $\Hc^d$-almost all $X\in\XF,$
\begin{equation}\label{Mat.Cond}
    0<\Th^d(\XF,X)<\infty,
\end{equation}
\end{thm}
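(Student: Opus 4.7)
The statement comprises two directions: necessity (rectifiability $\Rightarrow$ existence and positive finiteness of $\Th^d(\XF,X)$ for $\Hc^d$-a.e.\ $X$) and sufficiency (the converse). My plan is to handle them separately, since they differ enormously in depth.

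For necessity I would argue as follows. By Definition \ref{def.rect}, up to an $\Hc^d$-null set $\XF=\cup_j\pmb{\f}_j(\Ac_j)$ with each $\pmb{\f}_j$ Lipschitz; after truncation and a McShane extension I may assume each $\pmb{\f}_j$ is defined and Lipschitz on all of $\R^d$. By the Rademacher theorem, $\pmb{\f}_j$ is differentiable at a.e.\ point of $\Ac_j$, and at every such differentiability point $X_0$ with $\pmb{\f}_j(X_0)=X$ the classical area formula yields
\begin{equation*}
\Hc^d(\pmb{\f}_j(\Ac_j)\cap B(X,r))=\om_d\, r^d\,J_{\pmb{\f}_j}(X_0)+o(r^d),\quad r\to 0,
\end{equation*}
where $\om_d$ is the volume of the $d$-dimensional unit ball and $J_{\pmb{\f}_j}$ the Jacobian. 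A standard measure-theoretic bookkeeping decomposes $\XF$ into disjoint Borel pieces each parametrized by a single $\pmb{\f}_j$, and summing the local contributions (after removing the countably many Lipschitz-image overlaps) gives the existence of $\Th^d(\XF,X)$ for $\Hc^d$-a.e.\ $X\in\XF$. Positivity and finiteness are immediate from the bounds $0<J_{\pmb{\f}_j}<\infty$ valid on the set where the approximate differential has full rank, which in turn holds a.e.\ on each $\Ac_j$ once pieces covered by lower-dimensional images have been discarded.

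For sufficiency, this is the celebrated theorem of D.~Preiss, for which no short proof is known. Its proof proceeds by tangent measure analysis. For $\Hc^d$-a.e.\ $X\in\XF$ the blow-ups $\m_{X,r}(E):=r^{-d}\Hc^d(\XF\cap(X+rE))$ form a bounded family of Radon measures, and by Preiss's compactness argument (which uses the assumed positive finite density in an essential way) every weak-$*$ cluster point is a nontrivial \emph{$d$-uniform} Radon measure on $\R^\Nb$. The central and very hard step is then to show that every such $d$-uniform measure coincides, up to a multiplicative constant, with $\Hc^d$ restricted to some $d$-dimensional affine subspace; this occupies the bulk of Preiss's paper and uses intricate moment expansions of the blow-ups, monotonicity properties, and, in high codimension, delicate geometric symmetrization arguments. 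Once tangent measures are shown to be flat for $\Hc^d$-a.e.\ $X$, a Whitney-type extension together with a countable covering argument supplies the Lipschitz graphs required by Definition \ref{def.rect}.

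The main obstacle is clearly the sufficiency direction: flatness of every tangent measure under only an almost-everywhere density hypothesis is a deep structural statement for which no conceptual shortcut is known. My plan, if I were to write a self-contained proof, would be to cite Preiss's theorem directly (as the authors do) and to devote explicit work only to the Rademacher/area-formula direction and to the covering step that passes from ``tangent measures flat $\Hc^d$-a.e.'' to the countable union of Lipschitz graphs demanded by Definition \ref{def.rect}.
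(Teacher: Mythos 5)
The paper offers no proof of this theorem at all: it is quoted from the literature (Sections 14--17 of Mattila's book and Preiss's paper, both cited immediately before the statement), so your plan --- invoke Preiss's tangent-measure theorem for the sufficiency direction and give the standard Rademacher/area-formula argument for the necessity direction --- is precisely the treatment the paper itself adopts, and your high-level description of Preiss's proof (blow-ups, $d$-uniform measures, flatness of tangent measures) is accurate. One correction to your necessity sketch: at a point $X_0$ where $\pmb{\f}_j$ is differentiable with injective differential, the density of the image at $X=\pmb{\f}_j(X_0)$ equals $1$ in the $\om_d r^d$ normalization, not $J_{\pmb{\f}_j}(X_0)$ --- the Jacobian cancels because $\pmb{\f}_j^{-1}(B(X,r))$ is approximately an ellipsoid of volume $\om_d r^d/J_{\pmb{\f}_j}(X_0)$, so the area formula returns $\om_d r^d$ --- but this slip is harmless here since the theorem only asserts $0<\Th^d(\XF,X)<\infty$. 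You should also record that the necessity direction uses $\Hc^d(\XF)<\infty$ (so that $\Hc^d\llcorner\XF$ is a Radon measure to which the density theorems apply); this is automatic in the paper's setting, where $\XF$ is compact and Ahlfors regular.
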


The condition in Theorem \ref{Rect.thm} can be, at least formally, relaxed to the following, see \cite{Preiss} Corollary 5.5:
\begin{thm}\label{upper/lower} There exists a constant $\cb=\cb(d,\Nb)$ such that a Borel set $\XF\subset\R^{\Nb}$  is $d$-rectifiable if and only if the upper and lower densities satisfy
 \begin{equation}\label{PreissCond}
 0<\Th^{*d}(\XF,X)<\cb(d,\Nb)\Th^d_{*}(\XF,X)<\infty
 \end{equation}
 for $\Hc^d$-almost all $X\in\XF$.
\end{thm}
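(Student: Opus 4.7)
The forward direction (rectifiability implies the density bound) is the classical part. For a $d$-rectifiable set $\XF$, the Besicovitch--Federer--Mattila theory for rectifiable sets gives that the $d$-dimensional density $\Th^d(\XF, X)$ exists, is finite, and in fact equals $1$ for $\Hc^d$-almost every $X \in \XF$ (this follows by covering $\XF$ up to a null set by Lipschitz images, passing to graphs of Lipschitz functions via Rademacher's theorem, and applying the area formula). In particular, $\Th^{*d}(\XF, X) = \Th^{d}_{*}(\XF, X) = 1$, so \eqref{PreissCond} holds with any $\cb(d, \Nb) > 1$.

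The reverse direction is the deep Preiss-type implication. The natural strategy is to work with the restricted measure $\m = \Hc^d \llcorner \XF$ and study its \emph{tangent measures} at typical points: weak-$*$ cluster points of the rescalings $\m_{X,r}(E) = r^{-d}\m(X + rE)$ as $r \to 0$. The density bound \eqref{PreissCond} has two key consequences. First, for $\Hc^d$-a.e.\ $X$ every tangent measure $\n \in \mathrm{Tan}(\m, X)$ is nontrivial. Second, the upper/lower density ratio is preserved (in fact improved) in the limit: every $\n$ is \emph{almost uniform} in the sense that
\begin{equation*}
c_1 \rho^d \le \n(B(Y, \rho)) \le c_2 \rho^d, \qquad Y \in \supp \n,\ \rho > 0,
\end{equation*}
with $c_2/c_1$ controlled by $\cb(d, \Nb)$. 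The key step would be to choose $\cb(d, \Nb)$ small enough so that these tangent measures become sufficiently close to genuinely $d$-uniform measures (ones with $\n(B(Y, \rho)) = \om_d \rho^d$).

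The main obstacle, and the core of Preiss's theorem, is then to show that in $\R^\Nb$ every $d$-uniform measure is \emph{flat}, i.e.\ a constant multiple of $\Hc^d$ restricted to some $d$-dimensional affine subspace. Preiss's argument proceeds via a delicate analysis of the moments $\int e^{-s|Y - Z|^2}\,d\n(Z)$, expansion in $s$, and a dimension-reduction/connectedness argument in the space of uniform measures. It is this flatness step, not the tangent-measure construction, that forces the quantitative constant $\cb(d, \Nb)$ and that no short proof is known for.

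Once flatness of tangent measures at $\Hc^d$-a.e.\ $X \in \XF$ is established, one concludes: the existence of a flat tangent measure at $X$ produces an approximate tangent $d$-plane to $\XF$ at $X$, and by the Federer--Besicovitch characterization (existence of approximate tangent planes $\Hc^d$-a.e.\ is equivalent to rectifiability), $\XF$ is $d$-rectifiable in the sense of Definition \ref{def.rect}. In summary, the only substantive input beyond standard GMT machinery is Preiss's flatness theorem, so a proof proposal amounts to reducing the assertion to that theorem; in practice the paper simply cites \cite{Preiss}, Corollary 5.5.
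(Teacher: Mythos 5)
The paper offers no proof of this statement: it is quoted directly from Preiss's work (Corollary 5.5 of \cite{Preiss}), and your proposal correctly identifies that the entire substantive content is Preiss's flatness theorem for uniform measures, to which you reduce the claim in the same way. Your sketch of the two directions (density $=1$ a.e.\ on rectifiable sets via the area formula for the forward implication; tangent measures, near-uniformity forced by a $\cb(d,\Nb)$ close to $1$, flatness, and the approximate-tangent-plane criterion for the converse) is an accurate account of the standard argument, so there is nothing to compare beyond noting that the paper simply cites the result.
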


Rectifiability is an especially common property and is especially easy to check for sets having Hausdorff dimension $1$, see, e.g.,  \cite{Fal}, Theorem 3.11.
\begin{thm}\label{Dim1} Suppose that $\XF$ is a compact connected set  of Hausdorff dimension $1$ in $\R^\Nb$. Then $\XF$ is rectifiable.
\end{thm}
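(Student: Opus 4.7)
The plan is to realize $\XF$ as a countable union of Lipschitz images of subsets of $\R$, which is precisely what Definition \ref{def.rect} requires for $d=1$. The workhorse is the classical fact that a compact connected subset of $\R^\Nb$ with finite $\Hc^1$-measure is arcwise connected by rectifiable arcs: any two of its points are joined by a path of finite length lying in the set. Once this is available, $1$-rectifiability follows by constructing countably many such arcs joining a dense sequence of points.

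First I would handle the case $\Hc^1(\XF)<\infty$ directly. To establish the arcwise connectedness lemma I would use an $\e$-chain argument: for each $\e>0$, cover $\XF$ by finitely many open balls of radius $\e$; by connectedness, any two points $p,q\in\XF$ are linked by a chain of overlapping balls, and the corresponding piecewise-linear curve $\g_\e$ has length uniformly bounded in $\e$ thanks to $\Hc^1(\XF)<\infty$. After arc-length reparametrization, the $\g_\e$ are equicontinuous, so Arzela--Ascoli delivers a continuous limit $\g:[0,L]\to\R^\Nb$ whose image lies in $\XF$ (by closedness) and whose length is finite (by lower semicontinuity of length under uniform convergence), giving the desired Lipschitz parametrization of a rectifiable arc from $p$ to $q$. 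Next, fixing a countable dense subset $\{x_n\}\subset\XF$, I take a rectifiable arc $\g_{mn}\subset\XF$ joining $x_m$ to $x_n$ for each pair; the countable union $\bigcup_{m,n}\g_{mn}$ is dense in $\XF$ and, by a standard covering-type argument on $1$-sets using $\Hc^1(\XF)<\infty$, coincides with $\XF$ up to an $\Hc^1$-null set.

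The case $\Hc^1(\XF)=\infty$ is handled by exhaustion: construct an increasing sequence of compact connected subcontinua $\XF_n\subset\XF$ with $\Hc^1(\XF_n)<\infty$ containing a growing dense subset of $\XF$. Natural candidates are Steiner-type minimizers of $\Hc^1$ among continua through $\{x_1,\dots,x_n\}$, whose existence is guaranteed by Golab's lower semicontinuity theorem for $\Hc^1$ under Hausdorff convergence of continua. Each $\XF_n$ is rectifiable by the finite-length case, so the countable union $\bigcup_n \XF_n$ is a countable union of Lipschitz arcs. The main obstacle I anticipate is precisely the final verification in this case: showing that $\Hc^1(\XF\setminus \bigcup_n\XF_n)=0$, which requires invoking the Besicovitch--Federer structure theorem for $1$-sets (forcing any purely $1$-unrectifiable portion to vanish in $\Hc^1$-measure inside a continuum of Hausdorff dimension exactly one) or a comparable tool from geometric measure theory. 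The finite-length subcase, by contrast, follows rather directly from the Arzela--Ascoli construction once the $\e$-chain lemma is in hand.
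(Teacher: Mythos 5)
Your first case, $\Hc^1(\XF)<\infty$, is essentially the classical argument behind the result the paper is quoting here (the theorem is stated with a reference to \cite{Fal} rather than proved in the text): the $\e$-chain construction combined with the lower bound $\Hc^1(\XF\cap B(x,r))\ge r$ for continua gives the uniform length bound, Arzel\`a--Ascoli produces rectifiable arcs inside $\XF$, and the increasing connected unions $F_N=\bigcup_{n\le N}\g_{1n}$ converge to $\XF$ in the Hausdorff metric, so Go\l{}ab's semicontinuity theorem forces $\Hc^1(F_N)\to\Hc^1(\XF)$ and the leftover set is $\Hc^1$-null. That part is sound, modulo the lower density estimate for continua which you use only implicitly but which is needed both for the length bound on the chains and for the final covering step.

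The genuine gap is the branch $\Hc^1(\XF)=\infty$, and it cannot be closed by the tool you propose. The Besicovitch--Federer structure theorem is a statement about sets of finite (or $\sigma$-finite) $\Hc^1$ measure; it says nothing about a continuum of Hausdorff dimension $1$ whose $\Hc^1$ measure fails to be $\sigma$-finite. Such continua exist: take a compact $C\subset\R^2$ with $0<\Hc^h(C)<\infty$ for the gauge $h(t)=t/\log(1/t)$ (then $C$ has Hausdorff dimension $1$, while every subset of finite $\Hc^1$ measure and every $\Hc^1$-null set has $\Hc^h$ measure zero, so $C$ is not $\sigma$-finite for $\Hc^1$ even modulo null sets), and join the generations of its defining construction by a hierarchical tree of segments. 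The resulting compact connected set has Hausdorff dimension $1$ but cannot be covered, up to an $\Hc^1$-null set, by countably many Lipschitz images of subsets of $\R$, since such a union is always $\sigma$-finite. So the statement is false without a finiteness hypothesis, and no exhaustion by Steiner subcontinua can reach the non-$\sigma$-finite part. The theorem must be read with $\Hc^1(\XF)<\infty$, which is exactly the hypothesis of the cited result in \cite{Fal}; in the paper this costs nothing, because wherever the theorem is used (e.g.\ Theorem \ref{Th.Connes.Rect}) the measure is additionally assumed Ahlfors $1$-regular, and a compact Ahlfors $1$-regular set automatically has finite $\Hc^1$ measure. Under that reading your first case is the entire proof and the second branch is vacuous.
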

Of course, Theorem \ref{Dim1} extends automatically to the countable union of disjoint compact connected sets.
\subsection{Connes integration and eigenvalue asymptotics on rectifiable sets }\label{Rect.as}

Here we obtain main results of the paper.
\begin{thm}\label{Th.Connes.Rect} Let $\XF\subset\R^\Nb$ ba a rectifiable set of dimension $d>0$, so one of conditions \eqref{Mat.Cond}, \eqref{PreissCond} is satisfied; for $d=1$, $\XF$ is, instead, supposed to be a countable union of disjoint compact connected sets. Assume that the Hausdorff measure $\Hc^d$ on $\XF$  is Ahlfors regular of order $d$.
 If $V$ is a real-valued function on $\XF$ belonging to the Orlicz space $L^{\Psi,\m}(\XF)$ with respect to the Hausdorff measure $\m=\Hc^d,$ $P=V\m,$
then   operator $\Tb=\Tb_{P}$ is measurable and the Connes integration formula
\begin{equation}\label{Connes.Rectif}
    \ta(\Tb)=\Zb(d,\dF)\int_{\XF} V d\m=\int_{\XF}\NF(P)(dX)
\end{equation}
is valid.
\end{thm}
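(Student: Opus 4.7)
The plan is to derive the measurability of $\Tb_P$ from an eigenvalue asymptotic formula, extending Theorem~\ref{Th.Add.Asymp} from finite to countably many Lipschitz surfaces via truncation and the Birman--Solomyak asymptotic perturbation Lemma~\ref{BSLemma}. By Definition~\ref{def.rect} (or, for $d=1$, by Theorem~\ref{Dim1} applied to the connected components of $\XF$), together with the standard geometric-measure-theoretic fact that a Lipschitz image of a subset of $\R^d$ is, up to an $\Hc^d$-null set, a countable union of pieces of Lipschitz graphs, one finds a pairwise disjoint sequence of compact Lipschitz surfaces $\Si_j\subset\XF$ with $\Hc^d(\XF\setminus\bigcup_j\Si_j)=0$. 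Set $P_j:=V\mathbf{1}_{\Si_j}\m$, $P^{(N)}:=\sum_{j\le N}P_j$, and $R^{(N)}:=P-P^{(N)}$; the defining quadratic forms split accordingly, so $\Tb_P=\Tb_{P^{(N)}}+\Tb_{R^{(N)}}$.

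For each fixed $N$ the measure $P^{(N)}$ is concentrated on a finite union of $d$-dimensional Lipschitz surfaces, so Theorem~\ref{Th.Add.Asymp} yields
\begin{equation*}
  \lim_{\la\to 0}\la\,n_\pm(\la,\Tb_{P^{(N)}})=\Zb(d,\dF)\int_{\bigcup_{j\le N}\Si_j}V_\pm\,d\m=:C_N^\pm,
\end{equation*}
and since $V\in L^{\Psi,\m}(\XF)\subset L^1(\m)$ with $\m(\XF)<\infty$, dominated convergence delivers $C_N^\pm\to \Zb(d,\dF)\int_\XF V_\pm\,d\m$. Writing $E_N:=\XF\setminus\bigcup_{j\le N}\Si_j$, the restriction $\m|_{E_N}$ inherits the upper bound in \eqref{Aregular} from the ambient Ahlfors-regular $\m$, hence still obeys \eqref{MazMeasCond}, and Theorem~\ref{EstTheor} gives the uniform remainder estimate
\begin{equation*}
  n_\pm(\la,\Tb_{R^{(N)}})\le C\,\la^{-1}\,\|V_\pm\|^{(av,\Psi;\m)}_{E_N},
\end{equation*}
with $C$ depending only on the Ahlfors constants of $\m$ and on $\AF$. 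Plugging this, together with the asymptotics for $\Tb_{P^{(N)}}$, into Lemma~\ref{BSLemma} with $\ve_N:=C\,\|V_\pm\|^{(av,\Psi;\m)}_{E_N}$, yields
\begin{equation*}
  \lim_{\la\to 0}\la\,n_\pm(\la,\Tb_P)=\Zb(d,\dF)\int_\XF V_\pm\,d\m.
\end{equation*}
Dixmier--Connes measurability and the integration formula \eqref{Connes.Rectif} follow exactly as in the proof of Theorem~\ref{measureoneLip}.

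The main obstacle is verifying that $\ve_N\to 0$, i.e.\ that the averaged Orlicz norm of $V_\pm$ over the shrinking sets $E_N$ vanishes. Since $\Psi(t)=(1+t)\log(1+t)-t$ satisfies the $\Delta_2$ condition, the ordinary Orlicz norm $\|V_\pm\|_{L^{\Psi,\m}(E_N)}$ is absolutely continuous in $\m(E_N)$ and so tends to zero. The averaged norm \eqref{AvNorm} differs from it by a normalization involving $\m(E_N)$, so some care is required to see that this normalization does not compensate the smallness; this can be handled by inspecting the duality defining \eqref{AvNorm} and exploiting that $V\in L^{\Psi,\m}$ globally on $\XF$. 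A shorter alternative bypasses the full eigenvalue asymptotics and uses continuity of singular traces in the Dixmier--Macaev quasi-norm: the vanishing of $\ve_N$ gives $\ta(\Tb_{R^{(N)}})\to 0$, and linearity combined with Theorem~\ref{FinitProp} then yields $\ta(\Tb_P)=\lim_N\ta(\Tb_{P^{(N)}})=\Zb(d,\dF)\int_\XF V\,d\m$ directly.
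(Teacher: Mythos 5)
Your proposal is correct, and in fact it contains the paper's own argument as a special case: the ``shorter alternative'' you mention at the very end --- splitting off a finite union $\XF_\nb$ of surfaces, applying Theorem~\ref{FinitProp} to it, bounding the remainder by Theorem~\ref{EstTheor}, and passing to the limit using linearity and continuity of the singular trace --- is precisely how the paper proves Theorem~\ref{Th.Connes.Rect}. Your main route is stronger: it establishes the full Weyl asymptotics $\la\,n_\pm(\la,\Tb_P)\to\Zb(d,\dF)\int_\XF V_\pm\,d\m$ via Theorem~\ref{Th.Add.Asymp} and Lemma~\ref{BSLemma}, and then reads off measurability as in Theorem~\ref{measureoneLip}. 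That is exactly the content and the proof of the paper's subsequent Theorem~\ref{Th.Asymp.rectif}; the paper deliberately separates the two because the trace formula, thanks to the linearity of $\ta$, does not require the nonlinear asymptotic machinery. Two small remarks. First, you ask for \emph{pairwise disjoint compact Lipschitz surfaces} covering $\XF$ up to a null set; disjointness of compact surfaces is not what the decomposition gives you, and the paper instead keeps possibly overlapping surfaces $\Si_j$ and makes the \emph{densities} $V_j$ disjointly supported by restricting $V$ to $\Si_\nb\setminus\XF_{\nb-1}$ --- this is the cleaner formulation and is what your operator splitting actually needs (mutual singularity of the $P_j$, not disjointness of the carriers). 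Second, your concern about $\ve_N\to0$ is legitimate but resolves in the standard way: the averaged norm \eqref{AvNorm} over $E_N$ is comparable to $\m(E_N)$ times the Orlicz norm with respect to the normalized measure $\m/\m(E_N)$, and for a fixed $V\in L^{\Psi,\m}$ this tends to zero with $\m(E_N)$ by absolute continuity of the $\Psi$-modular; the paper asserts this without comment, so your extra care is a useful addition rather than a gap.
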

\begin{proof}
Let $\Si_j$ be a numeration of the Lipschitz surfaces entering in the definition of a rectifiable set. By the conditions of Theorem, this numeration can be chosen in such way that
\begin{equation*}%\label{sums}
    \lim_{\nb\to \infty}\m(\XF\setminus\bigcup_{j<\nb}\Si_j)=0.
\end{equation*}
We define densities $V_j\in L^{\Psi,\m_j}_{\Si_j}$ in the following way.
For $j=1$, we set $V_1$ as the restriction of $V$ to $\Si_1$. Then, inductively, for $\nb>1$ we take $\XF_{\nb-1}=\cup_{j<\nb}\Si_j.$
Having $V_j,\, j<\nb$  defined, we set $V_{\nb}$ as the restriction of $V$ to the set $\Si_{\nb}\setminus\XF_{\nb-1}.$ Constructed in this way, for any point $X\in\XF,$
no more than one of functions $V_j$ is nonzero. Moreover,
 $\sum_j{V_j}=V,$ pointwise and in $L^{\Psi,\m}$, with $\sum_jV_{\pm}=V_{\pm}.$
%Additionally, due to \eqref{sums}, \begin{equation}\label{limnorm}   \end{equation}
 By Theorem \ref{FinitProp}, for operator $\Tb_{V,\m,\XF_\nb}$ involving finitely many Lipschitz surfaces, the Connes integration formula is valid
\begin{equation}\label{int.part}
    \ta(\Tb_{V,\m,\XF_\nb})=\Zb(d,\dF) \int_{\XF_\nb}V d\m.
\end{equation}
On the other hand, by Theorem \ref{EstTheor},
\begin{equation*}%\label{remainder.part}
    \limsup_{\la \to 0}\la n_{\pm}(\la, \Tb_{P,\XF\setminus\XF_{\nb}})\le C\|V\|^{(av,\Psi,\m)}_{\XF\setminus\XF_{\nb}}.
\end{equation*}
The quantity on the right tends to zero as $\nb\to\infty.$ Therefore, by continuity, the singular trace of operator $\Tb_{P,\XF\setminus\XF_{\nb}}$ tends to zero as $\nb\to\infty$ and it is possible to pass  to limit in \eqref{int.part} which gives \eqref{Connes.Rectif}.
\end{proof}

Similar to the previous section, the statement about the eigenvalue asymptotics, obviously, a stronger one than the integration formula, is valid.

\begin{thm}\label{Th.Asymp.rectif} Let $\XF$ be a compact rectifiable set of dimension $d$ in $\R^{\Nb}$, Ahlfors $d$-regular, and $V$ be a real Borel  function on $\XF$ such that $\|V\|^{\Psi,\m}_{\XF}<\infty,$ $\m=\Hc^d$ is the $d$-dimensional Hausdorff measure. Then for the eigenvalues of $\Tb_{P,\XF},$ $P=V\m,$ the following asymptotic formula is valid.
\begin{equation*}%\label{Asympt.rectif}
    \la n_{\pm}(\la, \Tb_{V,\m,\XF})\sim \Zb(d,\dF)\int_{\XF}V_{\pm}d\m=\int_{\XF}\NF(P_{\pm})(dX).
\end{equation*}
 \end{thm}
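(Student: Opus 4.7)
The plan is to follow the exhaustion strategy used in the proof of Theorem~\ref{Th.Connes.Rect}, but upgraded from trace-level convergence to convergence of the full eigenvalue counting function. The upgrade is supplied by the Birman--Solomyak perturbation Lemma~\ref{BSLemma} together with the additivity result Theorem~\ref{Th.Add.Asymp} for finite unions of same-dimensional Lipschitz surfaces.

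By Definition~\ref{def.rect}, after discarding an $\Hc^d$-null set (which contributes nothing, by Ahlfors regularity combined with Theorem~\ref{EstTheor} applied to the zero measure), we may write $\XF = \bigcup_{j\ge 1}\Si_j$ with each $\Si_j$ a compact $d$-dimensional Lipschitz surface. Replacing $\Si_j$ by the Borel pieces $\tilde\Si_j = \Si_j\setminus\bigcup_{k<j}\Si_k$ makes the pieces pairwise disjoint while preserving the property that each is contained in a Lipschitz surface (so Theorems~\ref{Th.As.RSh} and \ref{Th.Add.Asymp} still apply to restrictions of measures to finite unions of such pieces). For each $\nb$ set $\XF_\nb=\bigcup_{j\le\nb}\tilde\Si_j$ and decompose
\[
V = V^{(\nb)} + W^{(\nb)}, \qquad V^{(\nb)} := V\,\1_{\XF_\nb}, \quad W^{(\nb)} := V\,\1_{\XF\setminus\XF_\nb},
\]
with the corresponding splitting $P = P^{(\nb)} + Q^{(\nb)}$. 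At the level of quadratic forms \eqref{quadrFormSing} one has $\tb_{P,\AF_0} = \tb_{P^{(\nb)},\AF_0} + \tb_{Q^{(\nb)},\AF_0}$, hence $\Tb_P = \Tb_{P^{(\nb)}} + \Tb_{Q^{(\nb)}}$.

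For the main term, Theorem~\ref{Th.Add.Asymp}, applied to the finite family $\tilde\Si_1,\dots,\tilde\Si_\nb$ (equipped with the restrictions of $V$), yields
\[
\lim_{\la\to 0}\la\, n_{\pm}(\la,\Tb_{P^{(\nb)}}) = \Zb(d,\dF)\int_{\XF_\nb}V_{\pm}\,d\m.
\]
For the remainder, since $\XF$ is Ahlfors $d$-regular the measure $\m$ restricted to any Borel subset still satisfies the upper Ahlfors bound uniformly in $\nb$, so Theorem~\ref{EstTheor} gives
\[
\limsup_{\la\to 0}\la\, n_{\pm}(\la,\Tb_{Q^{(\nb)}}) \le C\,\|W^{(\nb)}_{\pm}\|^{(av,\Psi,\m)}_{\XF\setminus\XF_\nb}.
\]
As $\nb\to\infty$, $\m(\XF\setminus\XF_\nb)\to 0$ and $\int_{\XF_\nb}V_{\pm}\,d\m \to \int_\XF V_{\pm}\,d\m$ by monotone convergence. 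Plugging these two facts into Lemma~\ref{BSLemma} with $\ve = \ve(\nb)\to 0$ produces existence of the limit for $\Tb_P$ and identifies its value as $\Zb(d,\dF)\int_\XF V_{\pm}\,d\m$, which is the desired formula.

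The principal obstacle is verifying that $\|W^{(\nb)}_{\pm}\|^{(av,\Psi,\m)}_{\XF\setminus\XF_\nb}\to 0$, i.e., the absolute continuity of the averaged Orlicz norm along a shrinking sequence of sets, for a fixed $V\in L^{\Psi,\m}$. Since $\Psi(t)=(1+t)\log(1+t)-t$ satisfies the $\Delta_2$-condition, this can be proved by truncation: for each $M$, split $V = V\,\1_{\{|V|\le M\}} + V\,\1_{\{|V|>M\}}$; the first part contributes at most $M\,\m(\XF\setminus\XF_\nb)$ in the $L^1$-sense (hence also in the averaged Orlicz norm, once one accounts for the definition~\eqref{AvNorm}), which vanishes as $\nb\to\infty$, while the second part is made small uniformly in $\nb$ by choosing $M$ large, using $\int\Psi(|V|)\,d\m<\infty$. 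A secondary technical point is that the pieces $\tilde\Si_j$ may no longer be closed Lipschitz surfaces, only Borel subsets of such; but Theorems~\ref{Th.As.RSh} and \ref{Th.Add.Asymp} depend only on the local structure of the measure on a Lipschitz graph, so the restriction of the Hausdorff measure to $\tilde\Si_j$ is handled without change. Beyond these points, the argument is an assembly of already-established tools.
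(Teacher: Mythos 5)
Your proof is correct and follows essentially the same route as the paper: exhaust $\XF$ by finitely many (disjointified) Lipschitz pieces, apply Theorem~\ref{Th.Add.Asymp} to each finite stage, control the remainder via Theorem~\ref{EstTheor} and the vanishing of the averaged Orlicz norm over sets of small measure, and conclude with Lemma~\ref{BSLemma}. Your explicit treatment of the absolute continuity of the averaged Orlicz norm and of the fact that the disjointified pieces are only Borel subsets of Lipschitz surfaces fills in details the paper leaves implicit.
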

 \begin{proof} The proof is a copy of the reasoning above for Theorem \ref{Th.Connes.Rect}. Just instead of the asymptotic relation \eqref{int.part}, we use on each stage the asymptotic formula \eqref{SumMeasures} for the eigenvalues and pass to the limit as $\nb\to\infty$ by means of Lemma \ref{BSLemma}.
 \end{proof}
 \subsection{Unions of rectifiable sets of different dimension}\label{Diff.dim.}
 At last, we consider  the most general case. Let  measure $\m$ on $\R^{\Nb}$ be the sum of measures $\m_d$, $1\le d\le \Nb$, each of them being the Hausdorff measure $\Hc^d$ of the corresponding dimension, restricted to a compact rectifiable set $\XF_d$ of dimension $d$, $\XF=\cup \XF_d$. Having densities $V_d\in L^{\Psi,\m_d}(\XF_d)$, we consider signed measures $P_d=V_d\m_d,$ their sum $P=\sum P_d$ and the corresponding operator $\Tb_P=\sum_d \Tb_{P_d}$.
 \begin{thm}\label{FinalThm.general} Let $\XF\subset\R^{\Nb}$ and operator $\Tb_P$ be as above. Then
 \begin{enumerate}\item{i} Operator $\Tb_P$ is Dixmier-Connes measurable and for any normalized singular trace $\ta,$
 \begin{equation*}%\label{final trace}
    \ta(\Tb_P)=\sum_d\Zb(d,\Nb-d)\int_{\XF_d} P_d(dX)=\sum\int_{\XF_d}\tilde{P}_d(dX)=\int_{\XF} \NF(P)(dX).
 \end{equation*}
\item{ii} For  operator $\Tb_P$ the eigenvalue asymptotic formulas hold
 \begin{equation*}%\label{final asymp}
    n_{\pm}(\la, \Tb_P)\sim \la^{-1}\sum_d \Zb(d,\Nb-d)\int_{\XF_d} P_{\pm,d}(dX) \sim \la^{-1}\int_{\XF}\NF(P_{\pm})(dX).
 \end{equation*}
 \end{enumerate}
 \end{thm}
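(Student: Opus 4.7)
For part (i), the plan is to reduce to the single-dimension case by linearity. The decomposition $\Tb_P = \sum_{d=1}^{\Nb} \Tb_{P_d}$ holds directly at the level of quadratic forms. By Theorem \ref{Th.Connes.Rect} applied to each rectifiable set $\XF_d$, each operator $\Tb_{P_d}$ is Dixmier--Connes measurable with $\ta(\Tb_{P_d}) = \Zb(d,\Nb-d)\int_{\XF_d} V_d\, d\m_d$, independently of the choice of normalized singular trace $\ta$. Since a finite sum of measurable operators is measurable and every singular trace is linear, summing over $d$ yields the required formula.

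For part (ii) the eigenvalue counting function does not depend linearly on the operator, so I would combine two limiting procedures: the cross-dimensional separation argument of Theorem \ref{Th.SumAs} together with the rectifiable-set approximation of Theorem \ref{Th.Asymp.rectif}. Given $\ve > 0$, I plan to construct inductively on $d = 1, 2, \ldots, \Nb$ open neighborhoods $\Gc_d$ of $\ZF_d := \XF_d \setminus \Gc_{d-1}$ of radius $\de_d$, choosing $\de_d$ so small that $\Hc^{d'}(\XF_{d'} \cap \Gc_d) < \ve$ for every $d' > d$. The resulting pieces $\ZF_d$ are then at mutual positive distance, while the removed pieces $\YF_d \subset \XF_d$ carry $\Hc^d$-measure at most of order $\ve$, hence small averaged Orlicz norm of $V_d$ over $\YF_d$.

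With this decomposition in hand I would split $P = P^{\mathrm{sep}} + P^{\mathrm{rem}}$, where $P^{\mathrm{sep}} = \sum_d V_d \m_d|_{\ZF_d}$ and $P^{\mathrm{rem}} = \sum_d V_d \m_d|_{\YF_d}$. Iterating the localization property \eqref{separated supp} across the pairwise separated supports $\ZF_d$ reduces the asymptotics of $\Tb_{P^{\mathrm{sep}}}$ to a sum, and Theorem \ref{Th.Asymp.rectif} applied to each rectifiable piece $\ZF_d$ (which inherits rectifiability and $d$-Ahlfors regularity from $\XF_d$) yields the contribution $\Zb(d,\Nb-d)\int_{\ZF_d} V_{d,\pm}\, d\m_d$. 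Meanwhile Theorem \ref{EstTheor} furnishes the bound $\limsup_{\la\to 0}\la n_\pm(\la, \Tb_{P^{\mathrm{rem}}}) \le C\sum_d \|V_d\|^{(av,\Psi,\m_d)}_{\YF_d}$, which tends to zero as $\ve \to 0$. The asymptotic perturbation Lemma \ref{BSLemma} then justifies passing to the limit $\ve \to 0$ to obtain the formula in (ii).

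The main obstacle is justifying the inductive construction of the separating neighborhoods. At each step, one must verify that the $\de_d$-neighborhood of the already-constructed lower-dimensional rectifiable pieces intersects every higher-dimensional rectifiable set $\XF_{d'}$, $d' > d$, in arbitrarily small Hausdorff measure. This rests on the observation that a $\de$-neighborhood of a $d$-rectifiable set has $\Nb$-dimensional Lebesgue measure of order $\de^{\Nb-d}$, combined with the Ahlfors regularity of $\XF_{d'}$ (which converts a bound on Lebesgue measure into one on $\Hc^{d'}$-measure) --- this is the same ingredient already exploited in the proof of Theorem \ref{Th.SumAs}. Once this point is secured, the limiting arguments described above deliver the full statement.
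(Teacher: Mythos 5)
Your argument is correct, and for part (i) it coincides with the paper's secondary derivation (the paper's primary one simply deduces (i) from (ii); its alternative is exactly your linearity-of-the-trace argument on top of Theorem \ref{Th.Connes.Rect}). For part (ii) you use the same three ingredients as the paper --- cross-dimensional separation, approximation of rectifiable sets, the remainder estimate of Theorem \ref{EstTheor}, and Lemma \ref{BSLemma} --- but in the opposite order. The paper first truncates each $\XF_d$ to a \emph{finite} union of Lipschitz surfaces (leaving a remainder of small Hausdorff measure and small averaged Orlicz norm) and then invokes the already-proved finite cross-dimensional asymptotics of Theorem \ref{Th.SumAs}; you first separate the dimensions by the inductive neighborhood construction and then apply the single-dimension rectifiable asymptotics of Theorem \ref{Th.Asymp.rectif} to each separated piece $\ZF_d$. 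Both routes are legitimate; yours has the small advantage of not re-running the Lipschitz approximation, at the price of redoing the separation argument of Theorem \ref{Th.SumAs} at the level of rectifiable sets rather than quoting it. Two minor points deserve attention. First, $\ZF_d=\XF_d\setminus\bigcup_{d'<d}\Gc_{d'}$ need \emph{not} inherit the two-sided Ahlfors condition \eqref{Aregular} (the lower bound can fail near the boundary of the removed neighborhoods), so Theorem \ref{Th.Asymp.rectif} does not apply to $\ZF_d$ verbatim; this is repaired by keeping the ambient set $\XF_d$ (which is Ahlfors regular by hypothesis) and replacing the density $V_d$ by $V_d\mathbb{1}_{\ZF_d}$ --- only the upper bound \eqref{MazMeasCond} is needed for the remainder estimate anyway. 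Second, the claim that $\Hc^{d'}(\XF_{d'}\cap\Gc_d(\de))\to 0$ is most cleanly justified by continuity from above of the finite measure $\Hc^{d'}|_{\XF_{d'}}$, since $\bigcap_{\de>0}\Gc_d(\de)$ has Hausdorff dimension $d<d'$; your Lebesgue-measure heuristic is the one the paper itself uses in Theorem \ref{Th.SumAs}, so it is acceptable at the same level of rigor, but it is not by itself a complete deduction.
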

\begin{proof} The statement (i) follows from (ii). Alternatively, it is obtained from  \eqref{int.part} due to the linearity of the singular trace by summing over $d$. Statement (ii) is established similar to Theorem \ref{Th.Asymp.rectif}. For a given small $\ve$, by definition, we can find finite collections $\Si_j^d$ of Lipschitz surfaces of dimension $d$ such that the Hausdorff measure of corresponding dimension of the set  $\XF_d\setminus \cup_j \Si_j^d$ is less than $\ve$, together with the averaged Orlicz norm of $V_d$ restricted the latter set. To the operator corresponding to the union of finitely many surfaces $\Si_j^d ,$ Theorem \ref{Th.SumAs} applies, giving the eigenvalue asymptotic formula. The remainder by the smallness of the Orlicz norms, satisfies an eigenvalue estimate with small constant. Finally, Lemma \ref{BSLemma} produces the required result in the usual way.
\end{proof}

\section{Spectral problems on Riemannian manifolds}\label{Sect.surface} Consider a closed smooth  Riemannian manifold $\Mc$ of dimension $\Nb$. Denote by $\D$ the Laplace-Beltrami operator on $\Mc$. A compact subset $\Si\subset\Mc$ is called Lipschitz surface of dimension $d$ if its image under co-ordinates mappings  are $d$-dimensional Lipschitz surfaces in domains in the Euclidean space. The results presented in previous sections carry over to this setting by a simple localizations, using Proposition \ref{Prop.Local}. We present here some calculations needed for this case.

Let $\gb=\{g_{\a\be}(X)\}$ be the metric tensor of $\Mc$  in some local co-ordinate system, $\gF$ will denote the inverse matrix, $\gF=\gb^{-1}=\{g^{\a\be}\}$. The Laplace-Beltrami operator $\D$ is the second order elliptic operator with principal symbol $\db(X,\X)= -\sum_{\a,\be} g^{\a\be}(X)\X_\a\X_\be$, thus the principal symbol of operator
$\AF=(1-\D)^{-l/2}$ equals $\ab_{-l}(X,\X)=(\sum_{\a,\be} g^{\a\be}(X)\X_\a\X_\be)^{-\Nb/4}.$

Let further the surface $\Si$, in the some local co-ordinates $X=(\xb,\yb)\in \R^{d}\times\R^{\dF}, $ $\dF=\Nb-d$, be defined by $\yb=\pmb{\f}(\xb),$ where $\pmb{\f}$ is a Lipschitz $\dF$-component vector-function. The above embedding $F:\xb \mapsto (\xb,\pmb{\f}(\xb))$ of $\Si$ into $\Mc$ generates a (nonsmooth) Riemannian metric $\hb$ on $\Si,$ namely, $\hb(\z,\z'):=\gb(DF(\z),DF(\z'))$ for tangent vectors $\z,\z'$ to $\Si$. Here $DF=(\pmb{1},\nabla\pmb\f)$ is the differential of the embedding $F,$ defined at the points of $\Si$ where  $DF$ exists, i.e., almost everywhere with respect to the Lebesgue measure on $\Si$ in local co-ordinates. Further calculations are being made just in such points.  Having the Riemannian metric on $\Si,$ the Riemannian measure $\m=\m_\Si$ is defined, in co-ordinates $\xb,$ as $\m_\Si=H(\xb)^{\frac12}d\xb,$ where $H(\xb)=\det(\hb),$ or, more explicitly,  $H(\xb)=\det\left(\pmb{1}+\gb(\nabla \pmb{\f},\nabla\pmb{\f})\right).$

As it was done in the Euclidean case, we consider a  density $V(X), \, X\in \Si$ and operator $\Tb_{P,\AF}=\AF^* P\AF$ in $L_2(\Mc),$ $P=V\m_{\Si},$ with respect to the Riemannian measure on $\Mc.$ Using eigenvalue estimates obtained in Section \ref{Sect.Est}, we, as before,  reduce the problem of finding asymptotics of eigenvalues of $\Tb_{P,\AF}$ to the case of the density $V$ being a nonnegative smooth function on $\Mc,$ $V=U^2$. Thus the operator $\Tb_{P,\AF}$ factorizes, similar to \eqref{factorizationSi},
as
\begin{equation}\label{Factor.last}
\Tb_{P,\AF}=(\G_{\Si}U\AF)^*(\G_{\Si}U\AF)=\KF^*\KF,
\end{equation}
where $\G_{\Si}$ is the operator of restriction from the Sobolev space $H^l(\Mc)$ to $L^2(\Si)$. As before, we note that  the nonzero eigenvalues of $\KF^*\KF$
 coincide with the ones of $\KF\KF^*,$ the latter being an integral operator on $\Si,$ the restriction to $\Si$ of the pseudodifferential operator $U\AF\AF^*U.$ The principal symbol of this operator equals $V(X)\ab_{-l}(X,\X)^2= V(X)(\sum_{\a,\be} g^{\a\be}(X)\X_\a\X_\be)^{-\Nb/2}$. By the results of \cite{RT2}, it suffices to consider the case of a smooth surface $\Si.$ Now, the restriction of  pseudodifferential operator $U\AF\AF^*U$ to  surface $\Si$ is performed according to the rules explained in Section \ref{Subsect.Lip}, following \cite{RT2}. Namely we calculate the symbol $r_{-d}(X,\x),$ $X\in \Si, \x\in T^{*}_X\Si$ by the rule in \eqref{Th.As.RSh}, which produces a symbol on $\Si,$ $r_{-d}(X,\x)= R(X,\x)^{-d/2},$ where $R(X,\x)$ is, for each fixed $X$, a quadratic form in $\x$ variables. Thus, $\KF^*\KF$ is an integral operator on $\Si$ with the leading singularity of the  kernel being equal to the Fourier transform of symbol $r_{-d}$ in $\x$ variable. Such Fourier transform   $\Rc(X,Y,X-Y)$ has logarithmic singularity in $X-Y$,
 \begin{equation}\label{Symb.manif}
    \Rc(X,Y)=R(X)\log(Q_X(X-Y)) +o(|X-Y|),
 \end{equation}
with certain quadratic form $Q_X$. Finally we arrive at the same asymptotic formula for eigenvalues as in the 'flat' case. This gives us the required versions of the results of Section \ref{Sect.As} for operators on Riemanninan surfaces.
 \begin{thm}\label{Th.As.Riem}  Let $\Si$ be a $d$-dimensional compact Lipschitz surface in an $\Nb$- dimensional Riemannian manifold $\Mc$, $\m_{\Si}$ be a measure on $\Si$ generated by the embedding of $\Si$ into $\Mc$. For a real function $V\in L^{\Psi,\m_\Si},\, P=V\m_\Si$ consider the operator $\Tb_{P,\AF}=\AF P\AF,$ where $\AF=(1-\D)^{-\Nb/4}.$
 Then operator $\Tb_{P,\AF}$ is Connes measurable,  and its eigenvalue asymptotics is given by formulas \eqref{AsLambda}.
 \end{thm}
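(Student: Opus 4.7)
The plan is to reduce to the Euclidean case already handled in Theorem \ref{Th.As.RSh} and Theorem \ref{measureoneLip}, using Proposition \ref{Prop.Local} as the bridge between the manifold setting and coordinate patches. First I would observe that, by the eigenvalue estimate of Corollary \ref{Cor.Man.Est}, a density argument in the averaged Orlicz norm, and the asymptotic perturbation Lemma \ref{BSLemma}, it suffices to prove the asymptotic formula \eqref{AsLambda} for a smooth sign-definite density, say $V=U^{2}$ with $U\in C^\infty(\Mc)$. Once the sign-definite case is known, the passage to a general real $V\in L^{\Psi,\m_\Si}$ and the deduction of Connes measurability from the eigenvalue asymptotics proceed exactly as in Subsection \ref{Sub.Connes} by virtue of the linearity of singular traces.

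Next, I would cover a neighborhood of $\Si$ by finitely many coordinate charts $\Uc_j$ with diffeomorphisms $\Fs_j:\Uc_j\to\Wc_j\subset\R^{\Nb}$ and pick a smooth partition of unity $\{\chi_j^{2}\}$ subordinate to this cover. Writing $P=\sum_j \chi_j^{2}P$ and using Proposition \ref{Prop.Local} together with the additivity (up to lower order) from Theorem \ref{Th.Add.Asymp}, each local piece $P_j=\chi_j^{2}P$ can be treated separately. In each chart, the pushforward $(\Fs_j)_* P_j$ is supported on a Lipschitz surface in $\Wc_j$, and near this support $\AF=(1-\D_{\Mc})^{-\Nb/4}$ coincides, up to an operator whose singular values decay faster than $k^{-1}$ (so irrelevant by Proposition \ref{Prop.Local}), with an order $-\Nb/2$ compactly supported pseudodifferential operator on $\R^{\Nb}$ whose principal symbol in the local coordinates is $a_{-l}(X,\X)=\bigl(\sum_{\a,\be}g^{\a\be}(X)\X_\a\X_\be\bigr)^{-\Nb/4}$. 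Theorem \ref{Th.As.RSh} then supplies the asymptotics of the corresponding local operator.

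The identification of the integrand follows the recipe already sketched in the excerpt. Factorize $\Tb_{P,\AF}=\KF^{*}\KF$ with $\KF=\G_{\Si}U\AF$ and pass to $\KF\KF^{*}$, acting in $L^{2}(\Si,\m_\Si)$, which is the integral operator on $\Si$ obtained by restricting to $\Si$ the order $-\Nb$ pseudodifferential operator $U\AF\AF^{*}U$ on $\Mc$ with principal symbol $V(X)\bigl(\sum_{\a,\be}g^{\a\be}(X)\X_\a\X_\be\bigr)^{-\Nb/2}$. Performing the fiber integration over the conormal $N_X\Si$ as in Theorem \ref{Th.As.RSh} produces the invariant density
\begin{equation*}
\ro_{\AF}(X)=\int_{S_X\Si}\Bigl(\sum_{\a,\be}g^{\a\be}(X)\x_\a\x_\be\Bigr)^{-d/2}d\x,\qquad X\in\Si,
\end{equation*}
with $S_X\Si$ denoting the cosphere with respect to the induced metric, while the Fourier transform in the cotangent fibre produces the logarithmic kernel \eqref{Symb.manif}. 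Gluing the local contributions gives the asymptotics \eqref{AsLambda} on $\Mc$, and measurability follows as in Theorem \ref{measureoneLip}.

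The main obstacle lies in carrying over the Lipschitz-to-smooth approximation argument of \cite{RT1}, \cite{RT2} to the manifold setting: each Lipschitz graph $\yb=\pmb{\f}(\xb)$ must be approximated by smooth graphs $\yb=\pmb{\f}_\e(\xb)$ with $\pmb{\f}_\e\to\pmb{\f}$ in $L^\infty$ and $\nabla\pmb{\f}_\e\to\nabla\pmb{\f}$ in every $L^p$, $p<\infty$, and one has to show that the integral operators built on $\Si_\e$ converge, in the sense of the asymptotic perturbation Lemma \ref{BSLemma}, to the integral operator on $\Si$. Since the Riemannian background enters only through smooth coefficients $g^{\a\be}$, the quantitative estimates of \cite{RT2} transfer verbatim; the one point requiring care is to verify that the density $\ro_{\AF}$ displayed above is intrinsically defined on $T^{*}\Si$ and does not depend on the chosen local splitting into tangential and normal coordinates, which is a direct consequence of the invariance of the symbol $a_{-l}(X,\X)^{2}$ under changes of variables.
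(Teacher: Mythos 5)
Your proposal follows essentially the same route as the paper: localization to coordinate charts via Proposition \ref{Prop.Local}, reduction to a smooth sign-definite density by the eigenvalue estimates and Lemma \ref{BSLemma}, factorization $\Tb_{P,\AF}=\KF^*\KF$ and passage to $\KF\KF^*$ as an integral operator on $\Si$ with logarithmic kernel, and the appeal to \cite{RT1}, \cite{RT2} for the Lipschitz-to-smooth approximation. The one caveat concerns your explicit formula for $\ro_{\AF}$: the fiber integration of $\bigl(\sum g^{\a\be}\X_\a\X_\be\bigr)^{-\Nb/2}$ over the conormal space yields (up to the Beta-function constant) the $-d/2$ power of the Schur complement of the normal block, i.e.\ the quadratic form dual to the metric \emph{induced} on $\Si$, which is not the naive restriction of $g^{\a\be}$ to tangential indices -- the paper deliberately records this only as $r_{-d}(X,\x)=R(X,\x)^{-d/2}$ with $R$ an unspecified quadratic form, and your stated invariance claim should be checked against that corrected expression.
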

 Results on the spectral properties of measures on rectifiable sets, see Sections \ref{Rect.as},  \ref{Diff.dim.} are carried over to the setting of Riemannian surfaces in the same way.

 \section{Lower estimates}\label{lower}
 In the results presented above, certain asymmetry is present. While the upper eigenvalue estimates for operators of the form $\Tb_{P,\AF}$ are established for measures supported on Ahlfors regular sets of any dimension $0<d\le\Nb,$ the eigenvalue asymptotics is proved only for rectifiable sets, thus, only for sets that have integer Hausdorff dimension, and even for not all of them. Therefore, the natural question arises  about order sharpness of our upper estimates. This section is devoted to establishing this sharpness. It turns out that lower estimates for eigenvalues can be justified in even more general setting than the upper ones.
 \begin{thm} Let $\AF$ be an order $-l=-\Nb/2$ pseudodifferential operator in $\Om\subset\R^\Nb$, elliptic in a domain $\Om'\subset\Om,$ and $\m$ be a finite Borel measure with compact support inside $\Om'.$ Suppose that $\m$ does not contain atoms and the density $V\ge0$ satisfies $\int V(X) \m(dX)<\infty$. Then for $P=V\m,$
 \begin{equation}\label{lowerEStim}
  \liminf \la  n_{+}(\la, \Tb_{P,\AF}) \ge [C(\AF)P(\Om)]=[C(\AF)\int Vd\m],
 \end{equation}
 where brackets on the right-hand side  denote the integer part of the number inside. In  inequality \eqref{lowerEStim}, the expression on the left-hand side is set to be equal to $+\infty$ if operator $\Tb_{P,\AF}$ is unbounded.
 \end{thm}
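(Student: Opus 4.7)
The approach is via the variational characterization of eigenvalues: $n_+(\lambda,\Tb_{P,\AF})$ equals the maximum dimension of a subspace on which $\tb_{P,\AF}[u]>\lambda\|u\|^2$. It is therefore enough, given any integer $M\le [C(\AF)\int V\,d\mu]$, to produce for each sufficiently small $\lambda$ a trial subspace of dimension at least $M/\lambda$ (up to lower order) with the required positivity.

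The construction rests on the atomlessness of $\mu$: for every $\delta>0$, partition $\supp\mu$ into finitely many Borel pieces $E_j$, each contained in a cube $Q_j$ of side $L_j\le\delta$, with pairwise disjoint closures chosen so that $\mu$ places zero mass on the boundaries of the $Q_j$. A single atom would obstruct this, which is where the hypothesis enters. Set $m_j:=\int_{E_j}V\,d\mu$, so $\sum_j m_j=\int V\,d\mu$. After reducing to the model operator $\AF_0=(1-\Delta)^{-\Nb/4}$ via Proposition \ref{Prop.Local}, I would use modulated cut-off trial functions
\begin{equation*}
u_{j,\xi}(X):=\chi_j(X)\,e^{i\xi\cdot X},\qquad \chi_j\in C_0^\infty(Q_j),\ \xi\in(2\pi/L_j)\Z^\Nb,
\end{equation*}
and exploit the fact that $u_{j,\xi}$ is concentrated in frequency near $\xi$: a Fourier-side computation gives $\tb_{P,\AF_0}[u_{j,\xi}]\sim (1+|\xi|^2)^{-\Nb/2}\,m_j$ while $\|u_{j,\xi}\|^2\sim|Q_j|$. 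Consequently the Rayleigh quotient exceeds $\lambda$ exactly when $|\xi|$ lies below $R_j\sim(m_j/(\lambda|Q_j|))^{1/\Nb}$; the count of lattice vectors in $\{|\xi|<R_j\}$ is asymptotically
\begin{equation*}
\frac{L_j^\Nb}{(2\pi)^\Nb}\cdot\frac{\omega_{\Nb-1}}{\Nb}R_j^\Nb\ =\ C(\AF)\,m_j/\lambda,
\end{equation*}
exactly reproducing the Weyl constant per cube. Summing over $j$ yields a total trial dimension $C(\AF)\int V\,d\mu\cdot\lambda^{-1}+O(1)$; the rounding inherent in the lattice count at each cube accounts for the integer part $[\,\cdot\,]$ in the statement, and taking $\liminf$ as $\lambda\to 0$ delivers the bound. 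If $\Tb_{P,\AF}$ is unbounded the claim is vacuous.

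The main obstacle is the nonlocality of $\AF_0$: although the $u_{j,\xi}$ are $L_2$-orthogonal across different cubes (disjoint supports) and nearly orthogonal within each cube (discrete Fourier basis), the images $\AF_0 u_{j,\xi}$ are not, so the quadratic form $\tb_{P,\AF_0}[\sum c_{j,\xi}u_{j,\xi}]$ contains cross terms $\int(\AF_0 u_{j,\xi})\overline{(\AF_0 u_{j',\xi'})}V\,d\mu$ with $j\neq j'$ that must be shown not to spoil the Rayleigh quotient. I would control them by combining the rapid (Bessel-type) off-diagonal decay of the kernel of $\AF_0\AF_0^*$ with the separation between distinct cubes, and then absorb the remaining error via the asymptotic perturbation lemma (Lemma \ref{BSLemma}) applied to the difference between $\Tb_{P,\AF_0}$ and its block-diagonal model. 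If this direct route becomes unwieldy, a fallback is to replace the restriction $V\mathbbm{1}_{E_j}\mu$ on each $Q_j$ by a smooth density supported in $Q_j$ of mass slightly less than $m_j$ (this replacement is enabled by atomlessness), invoke the classical Weyl asymptotic for each such smooth piece, and glue the contributions by Ky Fan's inequality and Lemma \ref{BSLemma}.
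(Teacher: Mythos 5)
Your scheme is genuinely different from the paper's (which runs through the Birman--Schwinger principle, the Grigor'yan--Netrusov--Yau lower bound for the negative spectrum of Schr\"odinger-type forms, and a dimension-lift trick to restore locality of the form $\|v\|^2_{H^{l}}$ when $l$ is a half-integer), but it contains a fatal gap that is \emph{not} the inter-cube cross terms you flag. The variational principle requires $\tb_{P,\AF_0}[u]>\la\|u\|^2$ for \emph{every} $u$ in the trial subspace, and your within-cube family $\{\chi_j e^{i\xi\cdot X}\}$ is ``nearly orthogonal'' only in $L^2(dX)$ --- not in $L^2(V\,d\m)$, which is the inner product that actually enters the form. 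For a singular $\m$ the Fourier transform of $\chi_j^2V\,d\m$ does not decay, so the Gram matrix $M_{\xi\xi'}=\int\chi_j^2e^{i(\xi'-\xi)\cdot X}V\,d\m$ is nowhere near diagonal; worse, it is massively rank-deficient. Take $\Nb=2$ and $\m$ the arc length on a straight segment along the $x_1$-axis: then $M_{\xi\xi'}$ depends only on $\xi_1-\xi_1'$, so all $\sim R_jL_j/2\pi$ trial functions sharing a given $\xi_1$ restrict to the \emph{same} function on $\supp\m$, and the form $\tb_{P,\AF_0}$ vanishes identically on a subspace of codimension $O(R_jL_j)$ inside your span of dimension $O(R_j^2L_j^2)$. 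On that kernel the Rayleigh quotient is $0<\la$, so the subspace certifies only $O(\la^{-1/2})$ eigenvalues, not $O(\la^{-1})$. In general, for a measure carried by a $d$-dimensional set the rank of $M$ on the ball $|\xi|<R_j$ is $O((R_jL_j)^d)$, which with your cutoff $R_j\sim\la^{-1/\Nb}$ yields only $O(\la^{-d/\Nb})$ usable directions: the $\Nb$-dimensional Weyl lattice count is simply the wrong combinatorics for a singular measure, and having each \emph{diagonal} Rayleigh quotient exceed $\la$ proves nothing about the span.

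Your fallback is also broken: a smooth density $W\,dX$ on $Q_j$ and the singular measure $V\1_{E_j}\m$ are mutually singular, so neither dominates the other as a measure and there is no inequality between the quadratic forms $\int|\AF_0u|^2W\,dX$ and $\int|\AF_0u|^2V\1_{E_j}\,d\m$ in either direction; atomlessness of $\m$ gives no such comparison. What atomlessness actually buys --- and this is the mechanism in the paper via \cite{GNY} --- is the ability to partition $\supp\m$ into $\sim t\,P(\Om)$ disjoint pieces each carrying mass $\ge c/t$, and to build \emph{one} test function per piece making the local form $\|v\|^2_{H^l}-t\int|v|^2\,dP$ negative; disjointness of supports plus locality of the (integer-order, or dimension-lifted) Dirichlet form then gives $N_-\ge[c\,t\,P(\Om)]$ negative eigenvalues, and the Birman--Schwinger principle converts this into \eqref{lowerEStim} with a non-sharp constant $C(\AF)$. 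A single well-chosen bump per quantum of mass is all one can extract from a general atomless measure; a full Weyl-density family of oscillating trial functions is not available.
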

 \begin{proof} By ellipticity, it is sufficient to consider the case $\AF=(1-\Delta)^{-\Nb/4}$, with cut-offs to $\Om$.

  With measure $P$ we associate the quadratic form $\qb_{tP}[v]=t\int |v(X)|^2 P(dX),$ $t>0,$ and consider the Schr\"odinger-type quadratic form $\hb_t[v]= \ab[v] -\qb_{tP}[v], $ $\ab[v]= \|v\|^2_{H^l(\Om')}.$ By the Birman-Schwinger principle,
 if this form, for certain $t>0,$ is lower semibounded, and thus defines a self-adjoint operator $\HF_t$, the number of negative eigenvalues of this operator is no greater than the number of eigenvalues  of $\Tb_{P,\AF}$ in $(t^{-1},\infty)$,
 $N_-(\HF_t)\le n_+(t^{-1},\Tb_{P,\AF}).$ We need to consider only such (not that large) values of $t$ since if the quadratic form $\hb_t$ is \emph{not} lower semibounded, the quantity $n_+(t^{-1},\Tb_{P,\AF})$ is infinite and \eqref{lowerEStim} is satisfied automatically.

  Let first $\Nb$ be an even number, so $l$ is integer. Then the form $\ab$ is equivalent to $\int_{\Om'}|\nabla^l v|^2dX+\|v\|^2$, and this form is local. So, we are in the conditions of the paper \cite{GNY}, see Theorem 4.1 and Example 4.13 there, which gives estimates from below for the number of negative eigenvalues of the form $\hb_t$, exactly as in \eqref{lowerEStim}.

 For an odd dimension $\Nb,$ i.e., for a non-integer $l,$ a direct application of Theorem 4.1 in \cite{GNY} is impossible since this theorem requires the form $\ab[v]$ to be local. Therefore, we use the trick of dimension lift, compare with \cite{ShStokes},  see proof of Theorem 1.2 there. Consider the space $\R^\Nb$ being embedded into $\R^{\Nb+1}$ as an $\Nb$-dimensional linear subspace.
 For $l=\Nb/2$, there exists a bounded restriction operator $\Tr:H^{l+\frac12}(\R^{\Nb})\to H^{l}(\R^\Nb),$ so that $\|\Tr v\|_{H^{l}(\R^\Nb)}\le c_0\|v\|_{H^{l+\frac12}(\R^{\Nb+1})}.$ With measure $P$ on $\R^{\Nb},$ we associate measure   ${P}^\divideontimes=P\otimes\delta_{X_{\Nb+1}}$ on $\R^{\Nb+1}$. The quadratic form ${\ab}^\divideontimes[v]=\|v\|^2_{H^{l+\frac12}(\R^{\Nb+1})}$ is now local, and thus we can apply Theorem 4.1 in \cite{GNY} to the form ${\hb}^\divideontimes_t={\ab}^\divideontimes[v]-t\int|v|^2{P}^\divideontimes(dX), $  obtaining
 \begin{equation}\label{GNY1}
  \nb_t:=  N_-({\hb}^\divideontimes_t)\ge [c t {P}^\divideontimes(\R^{\Nb+1})]=[c t P(\R^{\Nb})].
 \end{equation}
 By the min-max principle, this means that there exists a subspace $\Lc\subset H^{l+\frac12}(\R^{\Nb+1}),$ $\dim\Lc=n+t$ such that
 \begin{equation*}%\label{GNY2}
{\hb}^\divideontimes_t[v]<0, \, v\in \Lc\setminus\{0\},
 \end{equation*}
 and $\dim \Lc=\nb_t,$ or
 \begin{equation}\label{GNY3}
  {\ab}^\divideontimes[v]<t\int_{\R^{\Nb}} |v|^2P(dX).
 \end{equation}
Due to denseness of continuous function in $H^{l+\frac12},$ we can suppose that $\Lc$ consists of continuous functions.
 Consider the subspace $\Tr(\Lc)\subset H^{l}(\R^{\Nb})\cup C(\R^{\Nb})).$ It has the same dimension as $\Lc$. In fact, if some nonzero function $v\in\Lc$ is annulled by $\Tr$, $\Tr v=0$, this would mean that $v$ is zero on $\R^{\Nb}$ and therefore $\int_{\R^{\Nb}} |v|^2P(dX)=0,$ which contradicts \eqref{GNY3}. Thus, the mapping $\Tr$ is injective on $\Lc$. We denote by $\Ec$ its right inverse mapping $\Ec: \Tr(\Lc)\to \Lc$. Therefore
 \begin{gather}\label{GNY4}
    {\ab}[v]-c_0t\int_{\R^{\Nb}} |v|^2(\R^{\Nb})P(dx)\le c_0\left[ {\ab}^\divideontimes[\Ec v]-t\int_{\R^{\Nb}} |\Ec v|^2(\R^{\Nb})P(dx)\right]<0,\\ \nonumber v\in \Tr(\Lc), \, v\ne -0.
 \end{gather}
 By the variational principle, \eqref{GNY4} means that the number of negative eigenvalues of  operator $\Hb_{c_0 t}$ is no less than $\nb_t$ in \eqref{GNY1}, which  proves our Theorem for this case as well.
 \end{proof}

\begin{acknowledgments}
The work is supported by Ministry of Science and Higher Education of the Russian Federation, agreement  075--15--2019--1619.
\end{acknowledgments}

\small

\end{document}